\newcommand{\bb}[1]{\mathbb{#1}}
\newcommand{\cc}[1]{\mathcal{#1}}
\newcommand{\lie}[1]{\mathfrak{#1}}
\newcommand{\diag}{\textrm{diag}}
\newcommand{\eo}[1]{\stackrel{-\circ}{#1}}
\def\inv{^{-1}}
\theoremstyle{plain}
\newtheorem{theorem}{Theorem}[section]
\newtheorem{corollary}[theorem]{Corollary}
\newtheorem{lemma}[theorem]{Lemma}
\newtheorem{proposition}[theorem]{Proposition}
\theoremstyle{definition}
\newtheorem{example}[theorem]{Example}
\newtheorem{remark}[theorem]{Remark}
\newtheorem{definition}[theorem]{Definition}
\numberwithin{equation}{section}
\begin{document}
\title[Lower bounds for Gromov width in the SO(n) coadjoint orbits.]
{Lower bounds for Gromov width in the special orthogonal coadjoint orbits.}

\author{Milena Pabiniak}

\address{Milena Pabiniak,
Department of Mathematics,  
Cornell University, 
Ithaca NY}
\email{milena@math.cornell.edu}

\thanks{\today}

\maketitle

\begin{abstract}Let $G$ be a compact connected Lie group $G$ and $T$ its maximal torus. The coadjoint orbit $\cc{O}_{\lambda}$ through $\lambda \in \lie{t}^*$ is canonically a symplectic manifold. Therefore we can ask the question of its Gromov width.
In many known cases the width is exactly the minimum 
over the set $\{ \langle \alpha_j^{\vee},\lambda \rangle; \alpha_j^{\vee} \textrm{ a coroot, }\langle \alpha_j^{\vee},\lambda \rangle>0\}$. 
We will show that the Gromov width for regular coadjoint orbits of the special orthogonal group is at least this minimum. The proof uses the torus action coming from the Gelfand-Tsetlin system.
\end{abstract}
\tableofcontents
\section{Introduction}In 1985 Mikhail Gromov proved the nonsqueezing theorem which is one of the foundational results in the modern theory of symplectic invariants.
The theorem says that a ball
$B^{2N}(r)$ of radius $r$, in a symplectic vector space $\bb{R}^{2N}$ with the usual symplectic structure, cannot be symplectically embedded into $B^2(R)\times \bb{R}^{2N-2}$
unless $r\leq R$. This motivated the definition of the invariant called the Gromov width.
Consider the ball of capacity $a$
$$ B^{2N}_a = \Big \{ z \in \bb{C}^N \ \Big | \ \pi \sum_{i=1}^N |z_i|^2 < a \Big \} , $$
with the standard symplectic form
$\omega_{std} = \sum dx_j \wedge dy_j$.
The \textbf{Gromov width} of a $2N$-dimensional symplectic manifold $(M,\omega)$
is the supremum of the set of $a$'s such that $B^{2N}_a$ can be symplectically
embedded in $(M,\omega)$. 

In this paper we consider coadjoint orbits of the special orthogonal group. Let $G=SO(2n+1)$ or $G=SO(2n)$. 
Then the Lie algebra $\lie{g}$ is the vector space of skew symmetric matrices of appropriate size.
We will identify the Lie algebra dual $\lie{g}^*$ with $\lie{g}$ using the $G$ invariant pairing in $\lie{g}$,
$(A,B)=-\frac 1 2 \textrm{trace}(AB).$
Throughout the paper we use the notation
\begin{displaymath}R(\alpha)=
\left(\begin{array}{cc}
    \cos(\alpha)&  -\sin(\alpha)\\
\sin(\alpha)& \cos(\alpha)
    \end{array}
\right),\,\,\,
L(a)=
\left(\begin{array}{cc}
    0&  -a\\
a& 0
    \end{array}
\right)
\end{displaymath} 
We make the following choices of maximal tori
\scriptsize\begin{displaymath} T_{SO(2n+1)}=\left\{ \left( \begin{array}{ccccc}
R(\alpha_1) &&&&\\
&R(\alpha_2)&&&\\
&& \ddots &&\\
&&& R(\alpha_n)&\\
&&&&1
\end{array}\right) \right\},\,\,\,T_{SO(2n)}=\left\{\left( \begin{array}{cccc}
R(\alpha_1) &&&\\
&R(\alpha_2)&&\\
&& \ddots &\\
&&& R(\alpha_n)
\end{array}\right)\right\} \end{displaymath}
\normalsize where $\alpha_j\in S^1.$  The corresponding Lie algebra duals are \scriptsize\begin{displaymath} \lie{t}_{SO(2n+1)}^*=\left\{ \left( \begin{array}{ccccc}
L(a_1) &&&&\\
&L(a_2)&&&\\
&& \ddots &&\\
&&& L(a_n)&\\
&&&&0
\end{array}\right) \right\},\,\,\,\lie{t}_{SO(2n)}^*=\left\{\left( \begin{array}{cccc}
L(a_1) &&&\\
&L(a_2)&&\\
&& \ddots &\\
&&& L(a_n)
\end{array}\right)\right\} \end{displaymath}
\normalsize 
and we choose the positive Weyl chambers to consist of matrices with 
 $a_1\geq  a_2\geq a_3\geq \ldots\geq a_n$ in the case $G=SO(2n+1)$, and $a_1\geq a_2\geq a_3\geq \ldots\geq a_{n-1}\geq|a_n|$ in the case $G=SO(2n)$. 
We are using the convention that the exponential map $exp:\lie{t}_{SO(2)} \rightarrow T_{SO(2)}$ is
given by $L(a)\rightarrow R(2\pi a),$ that is $S^1\cong \bb{R}/\bb{Z}$. 
A point $\lambda \in \lie{g}^*$ and a coadjoint orbit through it are called \textbf{regular} if the stabilizer of $\lambda$ under coadjoint action is the maximal torus. Coadjoint orbits are in bijection with points in the positive Weyl chamber. Under this bijection, the regular points correspond to the interior of the chamber.
Fix a point $\lambda=(\lambda_1, \lambda_2, \lambda_3, \ldots, \lambda_n)$, in the interior of the positive Weyl chamber, $\lie{t}^*_+$,\footnotesize
\begin{displaymath}
 \lambda=\begin{cases}
 \,\,\,\,\left( \begin{array}{cccccc}
L(\lambda_1)& &&&&\\
&L(\lambda_2)&&&&\\
&&L(\lambda_3)&&&\\
&&& \ddots &&\\
&&&& L(\lambda_n)&\\
&&&&&0
\end{array}
\right)\in \lie{t}_{SO(2n+1)}^* & \textrm{ if }G=SO(2n+1)\\
&\\
\,\,\,\,\left( \begin{array}{ccccc}
L(\lambda_1)& &&&\\
&L(\lambda_2)&&&\\
&&L(\lambda_3)&&\\
&&& \ddots &\\
&&&& L(\lambda_n)
\end{array}
\right)\in \lie{t}_{SO(2n)}^* & \textrm{ if }G=SO(2n)
\end{cases}
\end{displaymath}\normalsize
Denote the orbit of the coadjoint action of $G$ on $\lambda$ by $\cc{O}_{\lambda}$. The orbit is also a symplectic manifold, with Kostant-Kirillov symplectic form. The dimension of $\cc{O}_{\lambda}$ is equal to
\begin{equation*}
 \dim \cc{O}_{\lambda}=\dim\,(\lie{g}^*)-\dim T_G=
\begin{cases}
n(2n+1)-n=2n^2 &\textrm{ if }G=SO(2n+1)\\
n(2n-1)-n=2n(n-1) &\textrm{ if }G=SO(2n).
\end{cases}
\end{equation*}

One of the fundamental invariants of symplectic manifolds is the Gromov width defined above.
The purpose of this paper is to calculate the Gromov width of the orbit $\cc{O}_{\lambda}$.
We find the value of this invariant for certain orbits by proving that the lower bound is equal to the upper bound established by Zoghi in \cite{Z}. 

Given a Hamiltonian torus action one can construct symplectic embeddings of balls using information from the momentum polytope. This method uses the theorem of Karshon and Tolman, \cite{KT1}, recalled here as Proposition \ref{embed}, as explained in Example \ref{centeredexample}.
Using this technique we prove the following theorem.
\begin{theorem}\label{main}
Consider the coadjoint orbit $M:=\mathcal{O}_{\lambda}$ of the special orthogonal group through a regular point $\lambda$.
The Gromov width of $M$ is at least the minimum 
$$\min\{\, \left|\left\langle \alpha^{\vee},\lambda \right\rangle \right|\,; \alpha^{\vee} \textrm{ a  coroot}\}.$$
\end{theorem}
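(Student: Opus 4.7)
The plan is to produce the lower bound by constructing explicit symplectic embeddings of balls through a Hamiltonian torus action; the action I will use is the Gelfand-Tsetlin integrable system on $\cc{O}_{\lambda}$, adapted to the orthogonal setting, and the embedding will come from Proposition \ref{embed} as outlined in Example \ref{centeredexample}.

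First I would recall the construction of the Gelfand-Tsetlin system on $\cc{O}_{\lambda}$. Given a skew-symmetric matrix $X \in \cc{O}_{\lambda} \subset \lie{so}(n)^*$, one considers the upper-left $k\times k$ principal submatrix $X^{(k)}$ for $k=1,\dots,n-1$ and records its (appropriately signed, non-negative) eigenvalues $\lambda^{(k)}_1 \geq \lambda^{(k)}_2 \geq \dots$. Away from the degeneracy locus where consecutive eigenvalues collide, these functions Poisson-commute, and their Hamiltonian flows generate a Hamiltonian action of a torus $T^N$ (with $2N = \dim \cc{O}_{\lambda}$) on a dense open subset $U \subset \cc{O}_{\lambda}$. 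The image of the moment map is the \emph{Gelfand-Tsetlin polytope} $\mathcal{GT}_{\lambda}$, cut out by interlacing inequalities between consecutive rows; the inequalities for the bottom rows differ between $B_n$ and $D_n$, reflecting the sign conventions in $\lie{t}^*_+$ chosen above.

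Second, Proposition \ref{embed} combined with Example \ref{centeredexample} reduces the problem to the following combinatorial task: for each $\epsilon>0$, exhibit a point $p$ in the interior of $\mathcal{GT}_{\lambda}$ around which one can inscribe a ``centered'' region (a simplex or box, in the sense of the Karshon-Tolman construction) of size
$$
r \;=\; \min_{\alpha^\vee}\bigl|\iprod{\alpha^\vee}{\lambda}\bigr| - \epsilon
$$
wholly inside $\mathcal{GT}_{\lambda}$. Any ball so constructed automatically lands in $U \subset \cc{O}_{\lambda}$, since $U$ is open and dense. The explicit $p$ I have in mind is obtained by setting each intermediate Gelfand-Tsetlin entry $\lambda^{(k)}_i$ to a suitable convex combination (e.g.\ an arithmetic mean) of the $\lambda_j$'s it interlaces, so that the signed distances from $p$ to the various facets are expressible as linear combinations of the positive coroot pairings $\iprod{\alpha^\vee}{\lambda}$.

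The main obstacle will be this last step. The \emph{simple} root evaluations $\lambda_i-\lambda_{i+1}$ appear transparently as gaps between adjacent Gelfand-Tsetlin entries, but the non-simple evaluations $\lambda_i+\lambda_j$ for $i<j$, and the short coroot evaluations $2\lambda_n$ in type $B_n$ (respectively $\lambda_{n-1}\pm\lambda_n$ in type $D_n$), are not manifest as single facet distances and must be recovered by more careful packing --- likely by applying Proposition \ref{embed} after a lattice-preserving change of basis in $T^N$ and centering the inscribed region at a point shifted off the ``obvious'' center. A clean case distinction between $SO(2n+1)$ and $SO(2n)$ will almost certainly be required, since the combinatorics of the last row of the Gelfand-Tsetlin pattern, and the corresponding short/long coroot structure, differ substantively between the two families.
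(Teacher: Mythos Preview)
Your high-level plan---use the Gelfand--Tsetlin system to get a big torus action on an open dense $U\subset\cc{O}_\lambda$, then feed a centered region into Proposition~\ref{embed}---is exactly the paper's strategy. But your execution plan has a real gap: you propose to center at a point $p$ in the \emph{interior} of $\mathcal{GT}_\lambda$, whereas Proposition~\ref{embed} requires that $\Phi^{-1}(\{\alpha\})$ consist of a single \emph{fixed point}. For a toric action the preimage of an interior point is a free Lagrangian torus orbit, not a point, so the hypothesis fails and the proposition does not apply. (Relatedly, ``$U$ is open and dense'' does not by itself guarantee that a specific open ball lies inside $U$; one must check that the preimage of the relevant centered region actually sits in $U$.)

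The paper fixes this by centering not at an interior point but at the vertex $\Lambda(\lambda)$ of the Gelfand--Tsetlin polytope, which is a genuine fixed point of the $T^N$-action and whose preimage is the single point $\lambda$. The substantive work is then Section~\ref{edges}: one enumerates \emph{all} edges of $\cc P$ emanating from $\Lambda(\lambda)$, computes the isotropy weight along each, verifies each weight is primitive in the weight lattice, and reads off the lattice length of each edge. These lengths turn out to be exactly the numbers $\lambda_j-\lambda_{j+1}$ and $2\lambda_k$ (with the expected variation at the bottom of the pattern for $SO(2n)$), so the obstacle you anticipate---that the pairings $\lambda_i+\lambda_j$ and the short/long coroot evaluations are ``not manifest''---does not arise: after imposing the Weyl-chamber inequalities on $\lambda$, the minimum of the edge lengths at $\Lambda(\lambda)$ is already $\min_{\alpha^\vee}|\langle\alpha^\vee,\lambda\rangle|$ (Corollary~\ref{minedgelengths}), with no further packing or lattice change of basis required.
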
 
In the case of $G=SO(2n+1)$ this result can be strengthened to cover also a class of orbits that are not regular (see Section \ref{notregular}).
The analysis of the root system of the special orthogonal groups done in Subsection \ref{rootsystem}, and inequalities imposed on $\lambda$, imply that if $G=SO(2n+1)$ this minimum is equal to
$$\min\{\lambda_1-\lambda_{2},\ldots,\lambda_{n-1}-\lambda_{n},\,2\lambda_n\},$$ 
while for $G=SO(2n)$ the minimum is
$$\min\{\lambda_1-\lambda_{2},\lambda_2-\lambda_{3},\ldots,\lambda_{n-1}-\lambda_{n},\lambda_{n-1}+\lambda_{n}\}.$$
\indent There are reasons to care about this particular lower bound. 
Zoghi in \cite{Z} analyzed orbits satisfying some additional integrality conditions. He called an orbit $\cc{O}_{\lambda}$ {\bf indecomposable} if there exists a simple root $\alpha$ such that for each root $\alpha'$ there exists a positive integer $k$ (depending on $\alpha'$) such that
$$k\,\langle \alpha^{\vee},\lambda \rangle=\langle (\alpha')^{\vee},\lambda \rangle .$$ 
In particular monotone orbits are indecomposable. Zoghi proved that for compact connected simple Lie group $G$ the formula 
$\min\{\, \left|\left\langle \alpha^{\vee},\lambda \right\rangle \right|\,; \alpha^{\vee} \textrm{ a  coroot}\}$
gives an upper bound for Gromov width of regular indecomposable $G$-coadjoint orbit through $\lambda$ (\cite[Proposition 3.16]{Z}). 
Combining his theorem for $G=SO(n)$ with our Theorem \ref{main} we obtain
\begin{corollary}
 The Gromov width of a regular indecomposable coadjoint $SO(n)$ orbit $\mathcal{O}_{\lambda}$ is exactly the minimum $$\min\{\, \left|\langle \alpha^{\vee},\lambda \rangle \right|\,; \alpha^{\vee} \textrm{ a  coroot}\}.$$
\end{corollary}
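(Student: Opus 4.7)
The plan is to deduce the Corollary by pairing the lower bound from Theorem \ref{main} with the matching upper bound established by Zoghi. This is a direct combination, so the proposal is essentially bookkeeping at the level of the statement, with all substantive work deferred to Theorem \ref{main}.

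First, I would invoke Zoghi's Proposition 3.16 in \cite{Z}: for any compact connected simple Lie group $G$ and any regular indecomposable coadjoint orbit $\cc{O}_{\lambda}$, the Gromov width is bounded above by $\min\{|\iprod{\alpha^{\vee}}{\lambda}|;\,\alpha^{\vee} \text{ a coroot}\}$. Specializing $G = SO(n)$ (in the ranges in which $SO(n)$ is compact connected simple, which are the ones the paper's discussion of root data addresses) furnishes the required upper bound on the Gromov width of every regular indecomposable $SO(n)$-orbit. Second, I would invoke Theorem \ref{main}, which supplies the matching lower bound for \emph{all} regular $SO(n)$-orbits, with no indecomposability hypothesis needed; since every regular indecomposable orbit is in particular regular, Theorem \ref{main} applies directly. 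The two bounds coincide, so the Corollary follows.

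Thus the Corollary itself requires no additional argument; its entire mathematical content is packaged inside Theorem \ref{main}, which is the genuine obstacle and whose proof will occupy the body of the paper. The plan there will be to construct symplectic ball embeddings into $\cc{O}_{\lambda}$ via the Hamiltonian torus action coming from the Gelfand--Tsetlin integrable system, combined with the Karshon--Tolman embedding technique recalled as Proposition \ref{embed}. The hard part will be exhibiting, through a careful analysis of the combinatorics of the Gelfand--Tsetlin polytope, a centered chart around a suitable fixed point whose inscribed ball has capacity exactly the minimal coroot pairing $\min\{|\iprod{\alpha^{\vee}}{\lambda}|\}$, and matching the resulting quantity with the explicit minima $\min\{\lambda_1-\lambda_2,\ldots,2\lambda_n\}$ or $\min\{\lambda_1-\lambda_2,\ldots,\lambda_{n-1}+\lambda_n\}$ determined by the root-system analysis in Subsection \ref{rootsystem}.
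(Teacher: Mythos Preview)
Your proposal is correct and matches the paper's approach exactly: the Corollary is obtained immediately by combining Zoghi's upper bound (\cite[Proposition 3.16]{Z}) for regular indecomposable orbits with the lower bound of Theorem \ref{main}, with no further argument needed. Your description of how Theorem \ref{main} itself is proved (Gelfand--Tsetlin torus action, Karshon--Tolman embedding, edge-length analysis of the polytope at a suitable vertex) is also an accurate summary of the paper's strategy.
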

 Zoghi also proved that the same formula gives the Gromov width for regular indecomposable $U(n)$ coadjoint orbits. Moreover, the author proved in \cite{P}, that for a class of non-regular $U(n)$ coadjoint orbits, the lower bound of Gromov width is given by minimum over non-zero elements of the above set, that is $$\min\{\, \left|\left\langle \alpha^{\vee},\lambda \right\rangle \right|\,; \alpha \textrm{ a  coroot and }\langle \alpha^{\vee},\lambda \rangle \neq 0\}.$$
The same formula describes the Gromov width of complex Grassmannians, a different class of non-regular $U(n)$ coadjoint orbits (\cite{KT1}). 

To prove Theorem \ref{main} we recall an action of the Gelfand-Tsetlin torus on an open dense subset of $\mathcal{O}_{\lambda}$. We then use
the theorem of Karshon and Tolman \cite{KT1} to obtain symplectic embeddings of balls. 
Coadjoint orbits come equipped with the Hamiltonian action of the maximal torus of the group. One can apply the Karshon and Tolman's result (Proposition \ref{embed}) to the region centered with respect to this standard action and obtain a lower bound for Gromov width of the orbit. This is how Zoghi proved in \cite{Z} the lower bounds of Gromov width of regular $U(n)$ coadjoint orbits.
If the root system is non-simply laced, the lower bound obtained this way is weaker (i.e. lower) then the lower bound we prove here. This phenomenon is explained in the Appendix \ref{standardnotenough}. In other words, the lower bounds for $SO(2n+1)$ we prove here could not be obtained using the standard action of maximal torus.
\\ \indent \textbf{Organization.} Section \ref{Preliminaries} contains preliminaries about the centered regions and root systems. In Section \ref{gtssystem} we describe the Gelfand Tsetlin system and an action it is inducing, while in Section \ref{polytope} we analyze the image of the momentum polytope. Section \ref{edges} is devoted to the computation of weights of this action. The proof of the Theorem \ref{main} is in Section \ref{proof}. Later, in Section \ref{notregular} we prove the generalization of the main theorem to the class of $SO(2n+1)$ orbits that are not regular. Appendix \ref{standardnotenough} explains why our result is so important for groups whose root system is non-simply laced. Second appendix, Appendix \ref{polytopeproof}, contains proofs of the lemmas used to analyze the Gelfand-Tsetlin polytope.
\\ \indent \textbf{Acknowledgments.} The author is very grateful to Yael Karshon for suggesting this problem and helpful conversations during my work on this project. 
The author also would like to thank her advisor, Tara Holm, for useful discussions.
\section{Preliminaries}\label{Preliminaries}
\subsection{Centered actions and a theorem of Karshon and Tolman}
Centered actions were introduced in \cite{KT2}. Here we briefly recall the definition and refer the reader to \cite{KT1} or \cite{P} for more explanation and examples.

Let $(M, \omega)$ be a connected symplectic manifold, equipped with an effective, symplectic action of a torus $T\cong (S^1)^{\dim T}.$
The action of $T$ is called \textbf{Hamiltonian} if there exists a $T$-invariant map
$\Phi \colon M \to \lie{t}^*$, called the \textbf{momentum map}, such that
\begin{equation} \label{def moment}
        \iota(\xi_M) \omega =  d \left< \Phi,\xi \right>
\quad \forall \ \xi \in \lie{t},
\end{equation}
where $\xi_M$ is the vector field on $M$ generated by $\xi \in \lie{t}$. Note that with our sign convention the isotropy weights of $T$ action on $T_pM$, where $p$ is a fixed point, are pointing out of the momentum map image. Let $\cc{T} \subset \lie{t}^*$ be an open convex set which contains $\Phi(M)$. The quadruple $(M,\omega,\Phi,\cc{T})$ is a
\textbf{proper Hamiltonian $\mathbf{T}$-manifold} if 
$\Phi$ is proper as a map to $\cc{T}$, that is,
the preimage of every compact subset of $\cc{T}$ is compact.
For any subgroup $K$ of $T$, let
$M^K = \{ m \in M \mid a \cdot m = m \ \forall a \in K \}$
denote its fixed point set.

\begin{definition} \label{centered-definition}
A proper Hamiltonian $T$-manifold $(M,\omega,\Phi,\cc{T})$
is \textbf{centered} about a point $\alpha \in \cc{T}$ if
$\alpha$ is contained in the momentum map image of every component
of $M^K$, for every subgroup $K \subseteq T$.
\end{definition}

\begin{proposition} (Karshon, Tolman, \cite{KT1})\label{embed}
Let $(M,\omega,\Phi,\cc{T})$ be a proper Hamiltonian $T$-manifold.
Assume that $M$ is centered about $\alpha \in \cc{T}$ and that 
$\Phi^{-1}(\{\alpha\})$ consists of a single fixed point $p$.
Then $M$ is equivariantly symplectomorphic to 
$$\left\{ z \in \bb{C}^n \ | \ \alpha + \pi \sum |z_j|^2 \eta_j \in \cc{T} \right\},$$
where  $-\eta_1,\ldots,-\eta_n$ are the isotropy weights at $p$.
\end{proposition}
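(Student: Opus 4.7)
The strategy is to combine the equivariant Darboux theorem at the fixed point $p$ with a global extension argument, using the centered hypothesis to rule out obstructions as one moves away from $p$.

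First I would apply equivariant Darboux at $p$. Because $p$ is a $T$-fixed point whose isotropy representation on $T_pM$ has weights $-\eta_1,\ldots,-\eta_n$, the Marle--Guillemin--Sternberg normal form provides a $T$-invariant neighborhood $U$ of $p$ and an equivariant symplectomorphism $\psi\colon U \to V$ onto a neighborhood $V$ of $0 \in \bb{C}^n$, where $\bb{C}^n$ carries its standard symplectic form and the linear $T$-action with weights $-\eta_j$. After adjusting the momentum map by a constant, its expression in the linear coordinates becomes $\Phi_0(z) = \alpha + \pi \sum |z_j|^2 \eta_j$, which matches the formula defining the model $X := \{z \in \bb{C}^n : \Phi_0(z) \in \cc{T}\}$. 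This produces a local equivariant symplectic identification of a neighborhood of $0$ in $X$ with a neighborhood of $p$ in $M$.

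Next I would extend $\psi\inv$ to a global equivariant diffeomorphism $\Psi\colon X \to M$ and then upgrade it to a symplectomorphism by Moser. For the extension, I would transport along preimages of paths in $\cc{T}$ starting at $\alpha$: given $z \in X$, the straight-line segment from $\alpha$ to $\Phi_0(z)$ lies in $\cc{T}$, and properness of both $\Phi_0$ and $\Phi$ ensures that its preimages in $X$ and $M$ are compact tubes which can be matched equivariantly once an initial identification at $p$ is fixed. The centered hypothesis guarantees that for every subgroup $K \subseteq T$ each connected component of $M^K$ has $\alpha$ in its momentum image; combined with $\Phi^{-1}(\alpha)=\{p\}$, this forces every such stratum to pass through $p$, so the transport is path-independent and the extension is consistent across the isotropy stratification. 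Once $\Psi$ is constructed, the two symplectic forms $\Psi^\ast\omega$ and the standard form on $X$ share the same momentum map $\Phi_0$; their difference is therefore closed, $T$-invariant, and vanishes along the fibers of $\Phi_0$ in a controlled way, so a $T$-equivariant Moser isotopy intertwines them.

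The principal obstacle is step two: establishing that the extension is well-defined, injective, and surjective. The local model and Moser's trick are routine, but gluing them into a single global diffeomorphism requires ruling out both missing strata of $M$ and monodromy around the centered region. The centered hypothesis is exactly tailored to preclude this: any hypothetical component of some $M^K$ not in the image of $\Psi$ would itself be a proper Hamiltonian $T/K$-manifold whose momentum image contains $\alpha$, and therefore whose closure meets $\Phi^{-1}(\alpha) = \{p\}$, forcing it to be covered by the extension of the Darboux chart along the corresponding stratum of $X$. Making this precise likely requires applying the symplectic cross-section theorem or the equivariant local normal form inductively along each isotropy stratum, with properness of $\Phi$ controlling the passage to the boundary of $\cc{T}$; this stratum-by-stratum bookkeeping is where I expect the real technical work to reside.
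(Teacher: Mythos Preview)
The paper does not prove this proposition at all: it is quoted verbatim as a result of Karshon and Tolman \cite{KT1}, followed only by a remark about sign conventions, and then used as a black box in Example~\ref{centeredexample} and in the proof of Theorem~\ref{main}. There is therefore no ``paper's own proof'' to compare your attempt against.

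For what it is worth, your sketch is in the right spirit for the original Karshon--Tolman argument: one starts from the equivariant local normal form at the unique fixed point over $\alpha$ and then argues that the centered hypothesis forces the entire manifold to coincide with the linear model. Your identification of the main difficulty---showing that the local Darboux chart extends globally, with surjectivity coming from the fact that every isotropy stratum must contain $\alpha$ in its momentum image and hence pass through $p$---is accurate. The actual proof in \cite{KT1} packages this somewhat differently (via uniqueness statements for centered spaces rather than an explicit path-transport construction), but the underlying mechanism is the one you describe. If you wanted to make your outline rigorous you would need to be more careful in the Moser step: the two forms agreeing on momentum-map fibers is not automatic from having the same momentum map, and one typically argues instead that both $(M,\omega)$ and the model are centered proper Hamiltonian $T$-manifolds with the same isotropy data over $\alpha$, and invokes a classification/uniqueness theorem directly.
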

Note that the above formlumation differs from the one in \cite{KT1} by a minus sign. This is due to the fact that our definition of momentum map (\ref{def moment}) also differs by a minus sign from the definition used in \cite{KT1}.
\begin{example}\label{centeredexample}
Consider a compact symplectic toric manifold $M$ whose momentum map image is the closure of the following region.
\begin{center}
\includegraphics[width=0.6\textwidth]{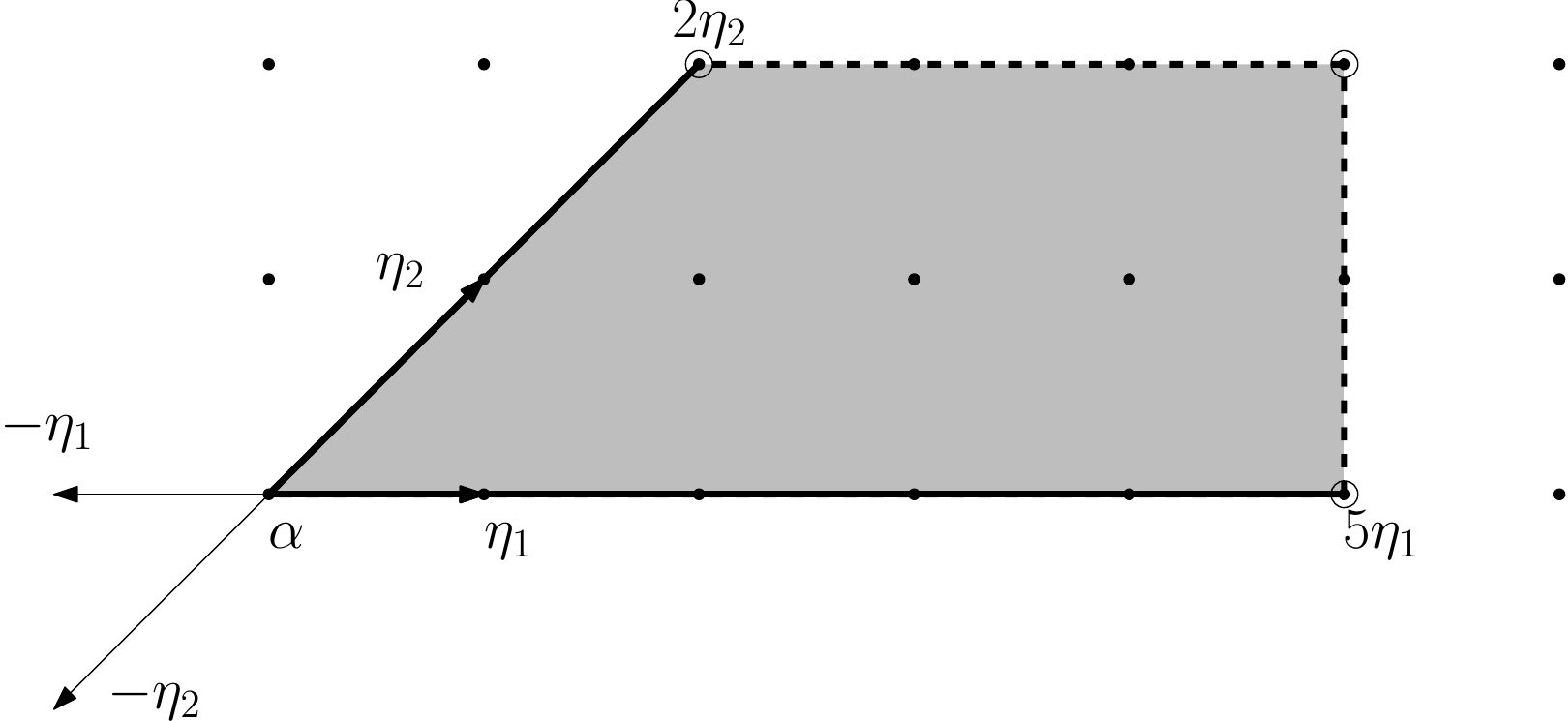}
\end{center}
The weights of the torus action are $(-\eta_1)$ and $(-\eta_2)$, and the lattice lengths of edges starting from $\alpha$ are $5$ and $2$
(with respect to the weight lattice).
The largest subset of $M$ that is centered about $\alpha$ maps under the momentum map to the shaded region. 
The above Proposition tells us that this centered region is
equivariantly symplectomorphic to
$$\{z \in \bb{C}^2| \alpha + \pi (|z_1|^2 +|z_2|^2) \in \textrm{ shaded region } \}.$$
If $z\in B^{4}_2 =  \{ z \in \bb{C}^2 \ \Big |  \pi (|z_1|^2  +|z_2|^2 ) <2\}$
then $\alpha + \pi (|z_1|^2 \eta_1 +|z_2|^2 \eta_2)$ is in the shaded region.
Therefore the $4$-dimensional ball $B^{4}_2$ of capacity $2$ embeds into $M$ and the Gromov width of $M$ is at least the minimum of lattice lengths of edges of the moment polytope, starting at $\alpha$.
 \end{example}
\subsection{Root system of the special orthogonal group.}\label{rootsystem}
The root system of a group $G$ consists of vectors in $\lie{t}^*$, the dual of the Lie algebra of the maximal torus of $G$.
The coroot $\alpha^{\vee}$ corresponding to a root $\alpha$ is an element of $\lie{t}$ given by the condition
$x(\alpha^{\vee})=2\,\frac{ \left\langle \alpha, x\right\rangle}{ \left\langle \alpha, \alpha \right\rangle}$
for all $x \in \lie{t}^*$. Recall that $x(\alpha^{\vee})=- \frac 1 2 \textrm{trace}(x\,\alpha^{\vee}).$
We will often denote this pairing between $\lie{t}$ and $\lie{t}^*$ by $\left\langle,  \right\rangle.$ We identify $\lie{t}^*$ (so also $\lie{t}$) with $\bb{R}^n$ by sending matrices \scriptsize\begin{displaymath} \left( \begin{array}{ccccc}
L(a_1) &&&&\\
&L(a_2)&&&\\
&& \ddots &&\\
&&& L(a_n)&\\
&&&&0
\end{array}\right) \in \lie{t}_{SO(2n+1)}^*,\,\,\,\left( \begin{array}{cccc}
L(a_1) &&&\\
&L(a_2)&&\\
&& \ddots &\\
&&& L(a_n)
\end{array}\right) \in \lie{t}_{SO(2n)}^*\end{displaymath}
\normalsize to $(a_1,a_2,\ldots,a_n)\in \bb{R}^n$. With this identification, the pairing $\left\langle,  \right\rangle$ in $\lie{t}^*$ is just the standard scalar product.

The root system of the group $SO(2n+1)$ consists of vectors $=\pm e_j$, $j=1, \ldots n$, of squared length $1$, and vectors $\pm (e_j\pm e_k)$, $j\neq k$, of squared length $2$ in the Lie algebra dual $\lie{t}_{SO(2n+1)}^*$. 
Therefore this root system for $SO(n)$ is non-simply laced. 
Note that $$\left\langle  (e_j\pm e_k)^{\vee} , \lambda \right\rangle= 2 \frac{\left\langle e_j\pm e_k,\lambda\right\rangle}{\left\langle e_j\pm e_k,e_j\pm e_k\right\rangle}=\lambda_j \pm \lambda_k$$ and 
$$\left\langle (e_j)^{\vee} , \lambda \right\rangle=2 \frac{\left\langle e_j,\lambda\right\rangle}{\left\langle e_j,e_j\right\rangle}=2\lambda_j.$$
Therefore for $\lambda$ in our chosen positive Weyl chamber
$$\min\{ \left|\left\langle \alpha^{\vee},\lambda \right\rangle \right|\,; \alpha^{\vee} \textrm{ a  coroot}\}=\min\{\lambda_1-\lambda_{2},\ldots,\lambda_{n-1}-\lambda_{n},\,2\lambda_n\}.$$

The root system for $SO(2n)$ is simply laced and consists of vectors  $\pm (e_j\pm e_k)$, $j\neq k$, of squared length $2$. 
Note that $$\left\langle  (e_j\pm e_k)^{\vee} , \lambda \right\rangle= 2 \frac{\left\langle e_j\pm e_k,\lambda \right\rangle}{\left\langle e_j\pm e_k,e_j\pm e_k\right\rangle}=\lambda_j \pm \lambda_k.$$
Therefore for $\lambda$ in a positive Weyl chamber
$$\min\{ \left|\left\langle \alpha^{\vee},\lambda \right\rangle \right|\,; \alpha^{\vee} \textrm{ a  coroot}\}=\min\{\lambda_1-\lambda_{2},\lambda_2-\lambda_{3},\ldots,\lambda_{n-1}-\lambda_{n},\lambda_{n-1}+\lambda_{n}\}.$$
\section{The Gelfand-Tsetlin system.}\label{gtssystem}
In this section we describe the Gelfand-Tsetlin (sometimes spelled Gelfand-Cetlin, or Gelfand-Zetlin) system of action coordinates, which originally appeared in \cite{GS1}. 
Consider the following sequence of subgroups
\begin{equation*}
 G_{n}=SO(n) \supset G_{n-1}=SO(n-1) \supset G_{n-2}=SO(n-2) \supset \ldots \supset G_2=SO(2).\end{equation*}
For these groups we make the following choices of maximal tori.
\scriptsize\begin{displaymath} T_{SO(2k+1)}=\left( \begin{array}{ccccc}
R(\alpha_1) &&&&\\
&R(\alpha_2)&&&\\
&& \ddots &&\\
&&& R(\alpha_k)&\\
&&&&1
\end{array}\right),\,\,\,\,\,T_{SO(2k)}=\left( \begin{array}{cccc}
R(\alpha_1)& &&\\
&R(\alpha_2)&&\\
&& \ddots &\\
&&& R(\alpha_k)\end{array}\right).\end{displaymath}
\normalsize The positive Weyl chambers are chosen in an analogous way to the case described in the Introduction. Take any $G_k$ from this sequence, $k=2, \ldots, 2n$.
The group $G_k$ injects into $G$ by 
$$G_k \ni B \mapsto \left(\begin{array}{c|c}
B & 0\\
\hline
0& I\end{array}\right).$$
Therefore it also act on $\cc{O}_{\lambda}$ by a subaction of the coadjoint action.
This action is Hamiltonian with a momentum map $\Phi^k :\cc{O}_{\lambda} \rightarrow \lie{so}(k)^*$ sending a matrix $A=[a_{ij}]$ to the 
$k \times k$ top left submatrix of $A$, which we denote by $\Phi^k(A)$ or $(A)_k$ for short. 
The action of the Gelfand-Tsetlin torus is 
defined using the following functions. Compose the map $\Phi^k$ with the map $s_k : \lie{so}(k)^* \rightarrow (\lie{t}_{SO(k)})^*_+$ sending $A \in \lie{so}(k)^*$ to the unique point of intersection of the $SO(k)$-orbit, $SO(k)\,\cdot\,A$, with the positive Weyl chamber. Recall that we identify Lie algebra dual $(\lie{t}_{SO(k)})^*$ with $\bb{R}^{\lfloor\frac{k}{2}\rfloor}$, as explained in the previous section.
The positive Weyl chamber, $(\lie{t}_{SO(k)})^*_+$, is identified with the subset of points $(x_1, \ldots, x_{\lfloor\frac{k}{2}\rfloor}) \in \bb{R}^{\lfloor\frac{k}{2}\rfloor}$ satisfying $x_1 \geq x_2 \geq \ldots \geq x_{\lfloor\frac{k}{2}\rfloor},$ for $k$ odd, and  $x_1 \geq x_2 \geq \ldots \geq x_{\frac{k}{2}-1}\geq |x_{\frac{k}{2}}|,$ for $k$ even.

The composition $s_k \circ \Phi^k:\cc{O}_{\lambda} \rightarrow (\lie{t}_{SO(k)})^*_+$ gives us $\lfloor\frac{k}{2}\rfloor$ continuous (not everywhere smooth) functions which we denote $$\Lambda^{(k)}:=(\lambda^{(k)}_{1},\ldots , \lambda^{(k)}_{\lfloor\frac{k}{2}\rfloor}).$$ In this notation the superscript keeps track of the dimension of the matrices in the group (not the dimension of the maximal torus). Note that due to our choices of positive Weyl chambers, the only Gelfand-Tsetlin functions that can be negative are $\{x^{(k)}_{\frac k 2}\}$, for $k$ even.
\begin{displaymath}
\xymatrix{
\mathcal{O}_{\lambda} \ar[r]^{\Phi^k} \ar[rd]_{\Lambda^{(k)}} &
\lie{so}(k)^*  \ar[d]^{s_k}&
 \\
&(\lie{t}_{SO(k)})^*_+
} 
\end{displaymath}
These functions are related to the following action of $T_{SO(k)}$ denoted by $*$. An element $t \in T_{SO(k)}$ acts on a point $A \in \cc{O}_{\lambda}$ by the standard $SO(k)$ action of $B^{-1}\, t\, B$, 
where $B \in SO(k)$ is such that 
$B\,\Phi^{k}(A)\, B^{-1} \in (\lie{t}_{SO(k)})^*_+$:
$$t*A:= \left(\begin{array}{c|c}B^{-1}\, t\, B& \\ \hline &I_{n-k}\end{array}\right)\,A\,\left(\begin{array}{c|c}B^{-1}\, t\, B& \\ \hline &I_{n-k}\end{array}\right)\inv.$$

Similarly to the unitary case, one can show 
\begin{proposition}
The function $\Lambda^{(k)}$ is smooth at the preimage of the interior of the positive Weyl chamber, $$U_{SO(k)}:=(\Lambda^{(k)})\inv (\textrm{int }(\lie{t}_{SO(k)})^*_+).$$
Moreover, the $*$ action of the torus $T_{SO(k)}$ on $U_{SO(k)}$ is Hamiltonian and $\Lambda^{(k)}$ is a momentum map.
\end{proposition}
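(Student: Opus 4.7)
The plan is to mimic the classical Guillemin--Sternberg construction of the Gelfand--Cetlin system for $U(n)$ (as in \cite{GS1}), adapted to the skew-symmetric setting. For smoothness of $\Lambda^{(k)}$, note that $\Phi^k$ is a linear projection and hence smooth. For a skew-symmetric $A$, the eigenvalues of $-A^2$ are the squares $(\lambda^{(k)}_j)^2$ of the Weyl-chamber coordinates, and on $U_{SO(k)}$ these are distinct and strictly positive. The implicit function theorem applied to the characteristic polynomial of $-A^2$ yields smoothness of each squared coordinate, and extracting positive square roots is smooth where the squares are bounded away from zero; in the even case the sign of $\lambda^{(k)}_{k/2}$ is locked in by the Weyl-chamber convention and is locally constant.

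Next, for the $*$ action: for $A \in U_{SO(k)}$ the image $s_k(\Phi^k(A))$ lies in the interior of the Weyl chamber, so its stabilizer in $SO(k)$ is $T_{SO(k)}$ and the diagonalizing element $B(A)\in SO(k)$ is unique up to left multiplication by $T_{SO(k)}$. Replacing $B(A)$ by $TB(A)$ replaces $B(A)^{-1} t B(A)$ by $B(A)^{-1} T^{-1} t T B(A)$, which equals $B(A)^{-1} t B(A)$ because $T$ and $t$ commute in the abelian group $T_{SO(k)}$, so $t*A$ is well defined. A local smooth section of the principal bundle $SO(k)\to SO(k)/T_{SO(k)}$ makes $A \mapsto B(A)$ smooth and hence $*$ smooth, and commutativity of $T_{SO(k)}$ quickly yields the group-law axiom $(st)*A = s*(t*A)$.

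For the momentum map identity, the key equation is
\begin{equation*}
\lambda^{(k)}_j(A) \;=\; \iprod{s_k(\Phi^k(A))}{e_j} \;=\; \iprod{\Phi^k(A)}{B(A)^{-1} e_j B(A)},
\end{equation*}
where $e_j$ is the $j$-th standard basis vector of $\lie{t}_{SO(k)}$ and the second equality uses $\mathrm{Ad}$-invariance of the pairing. Differentiating along $v \in T_A \cc{O}_\lambda$, the contribution from the $A$-dependence of $B(A)^{-1} e_j B(A)$ takes the form $\iprod{\Phi^k(A)}{[B(A)^{-1} e_j B(A), \zeta]}$ for some $\zeta \in \lie{so}(k)$; by $\mathrm{ad}$-invariance this equals $\iprod{[\Phi^k(A), B(A)^{-1} e_j B(A)]}{\zeta}$, which vanishes because $e_j$ centralizes the Weyl-chamber element $B(A)\Phi^k(A)B(A)^{-1} \in \lie{t}^*$, so $B(A)^{-1} e_j B(A)$ centralizes $\Phi^k(A)$. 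What remains is $\iprod{d\Phi^k|_A(v)}{B(A)^{-1} e_j B(A)}$, and by the momentum-map property of the standard $SO(k)$-action this equals $\iota(\xi_M)\omega|_A(v)$ with the \emph{fixed} element $\xi = B(A)^{-1} e_j B(A)$; differentiating the flow $s \mapsto \exp(se_j)*A$ at $s=0$ produces precisely $\xi_M|_A$. Thus each $\lambda^{(k)}_j$ is a Hamiltonian for the $j$-th circle factor of $*$, and assembling the components gives a momentum map for the $T_{SO(k)}$ action.

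The main obstacle is this last cancellation: one must handle the $A$-dependence of the diagonalizing matrix $B(A)$, which is only defined modulo $T_{SO(k)}$ and whose derivative requires a local smooth gauge choice, and then verify that the correction term vanishes by combining $\mathrm{ad}$-invariance of the pairing with the fact that $e_j$ centralizes the diagonalized matrix $B(A)\Phi^k(A)B(A)^{-1}$ lying in $\lie{t}^*$.
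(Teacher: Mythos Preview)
The paper itself disposes of this proposition in one line, citing the analogous unitary argument in \cite{P}; your explicit Thimm--style computation therefore goes well beyond what the paper provides. Your treatment of the $*$ action---well-definedness modulo the $T_{SO(k)}$ ambiguity in $B(A)$, the group law, and especially the momentum-map identity via the cancellation $[\Phi^k(A),\,B(A)^{-1}e_jB(A)]=0$ coming from $\mathrm{ad}$-invariance and the fact that $B(A)\Phi^k(A)B(A)^{-1}\in\lie{t}^*$ commutes with $e_j$---is correct and is precisely the mechanism underlying the Guillemin--Sternberg construction.

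There is, however, a real gap in your smoothness argument when $k$ is even. You claim that on $U_{SO(k)}$ the squares $(\lambda^{(k)}_j)^2$ are ``distinct and strictly positive'' and that the sign of $\lambda^{(k)}_{k/2}$ is ``locked in \ldots\ and locally constant.'' For $k=2m$ (type $D_m$) the interior of the positive Weyl chamber is $\{x_1>\cdots>x_{m-1}>|x_m|\}$, and this set \emph{contains} points with $x_m=0$: the chamber walls are $x_{m-1}=x_m$ and $x_{m-1}=-x_m$, not $x_m=0$. At such points the smallest square vanishes, the square-root map is not smooth, and the sign of $\lambda^{(k)}_m$ is not locally constant. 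The standard repair is the Pfaffian: $\mathrm{Pf}\bigl(\Phi^k(A)\bigr)=\prod_{j=1}^{m}\lambda^{(k)}_j(A)$ is polynomial in the entries of $\Phi^k(A)$, hence smooth, and on $U_{SO(k)}$ one recovers
\[
\lambda^{(k)}_m(A)\;=\;\mathrm{Pf}\bigl(\Phi^k(A)\bigr)\Big/\prod_{j=1}^{m-1}\lambda^{(k)}_j(A),
\]
with the denominator smooth and nonvanishing there; this gives both the value and the sign of the last coordinate smoothly across $x_m=0$. For odd $k$ (type $B_m$) your argument works as written, since the interior of that chamber forces $x_m>0$.
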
\begin{proof}
The proofs are analogous to the unitary case, described in \cite[Proposition 3.2]{P} and \cite[Proposition 3.4]{P}. 
\end{proof}
If $G=SO(2n+1)$, putting together these functions for $k=1, \ldots 2n$ we obtain a function, denoted by $\Lambda=\{\lambda^{(k)}_j\,| 1\leq k \leq 2n, \, 1\leq j \leq \lfloor\frac{k}{2}\rfloor\}$, mapping $\cc{O}_{\lambda}$ to $\bb{R}^N$, where $$N= n+ 2(n-1) +2(n-2)+\ldots +2 \cdot 2= n + n(n-1)=n^2.$$
If $G=SO(2n)$, then we obtain a function $\Lambda=\{\lambda^{(k)}_j\,| 1\leq k \leq 2n-1, \, 1\leq j \leq \lfloor\frac{k}{2}\rfloor\}$, mapping $\cc{O}_{\lambda}$ to $\bb{R}^N$, with $$N=2(n-1) +2(n-2)+\ldots+ 2 \cdot 2= n(n-1).$$
In both cases $N$ is equal to half of the dimension of a regular coadjoint orbit of $G$.

 Putting the actions together we obtain the Hamiltionian action of the {\bf Gelfand-Tsetlin torus} $T=T_{GT}=T_{SO(n-1)}\oplus \ldots \oplus T_{SO(2)}\cong(S^1)^N$ on the dense open subset $$ U:=\cap_k \, U_{SO(k)}$$ of the coadjoint orbit $\cc{O}_{\lambda}$ where all functions $\Lambda^{(k)}$ are smooth.
 This action is called the {\bf Gelfand-Tsetlin action} and its momentum map is $\Lambda$.

\section{The Gelfand-Tsetlin polytope}\label{polytope}
In this section we describe in details the image of Gelfand-Tsetlin functions, $\Lambda(\cc{O}_{\lambda})$. The fact that the image forms a polytope seems to be well known. However we could not find a reference for this fact. Therefore we prove it below.
The following lemmas are helpful in analyzing the image of Gelfand-Tsetlin functions. 
Their proofs are in the Appendix \ref{polytopeproof}.
\begin{lemma}\label{openodd} For any real numbers \begin{equation} \label{ineqodd} b_1 \geq a_1 \geq b_2 \geq a_2 \geq \ldots \geq a_{k-1} \geq b_{k} \geq |a_{k}|\end{equation}
 there exist a real vector $Y=[y_1, \ldots, y_{2k}]^T$ in $\bb{R}$ 
such that
the skew symmetric matrices 
\small
\begin{displaymath}
A:=\left(\begin{array}{c|c}
\begin{array}{cccc}
  L(a_1) &  & &\\
 &L(a_2)   & &\\
& &\ddots & \\
&&& L(a_k)
    \end{array} & Y\\
\hline
-Y^T &0 
\end{array}
\right)\textrm{  and  }S:=\left(\begin{array}{c|c}
\begin{array}{cccc}
  L(b_1) &  & &\\
 &L(b_2)   & &\\
& &\ddots & \\
&&& L(b_k)
    \end{array} & 0\\
\hline
0 &0 
\end{array}
\right).
\end{displaymath} \normalsize are in the same $SO(2k+1)$ orbit. Moreover, \\
(1) if $a_j,b_j$ are not satisfying inequalities (\ref{ineqodd}), then such $Y$ does not exist,\\
(2) if $j$ is the unique index from $1,\ldots, k$ such that $a_j=b_m$ for some $m$, then $y_{2j-1}=y_{2j}=0$. \end{lemma}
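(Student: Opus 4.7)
The plan is to reduce this orbit question to a spectral one and then match characteristic polynomials. For odd dimension $n=2k+1$, conjugation by $-I\in O(n)\setminus SO(n)$ is trivial, so $SO(2k+1)$-orbits of real skew-symmetric matrices coincide with $O(2k+1)$-orbits and are classified by the (unordered) spectrum. Since
$$\det(\lambda I_{2k+1}-S) = \lambda\prod_{j=1}^{k}(\lambda^2+b_j^2),$$
it suffices to find $Y\in\bb{R}^{2k}$ for which $\det(\lambda I-A)$ equals this polynomial.

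First I compute the left side via the Schur complement with respect to the lower-right $1\times 1$ block (valid for $\lambda\neq 0$). Writing $D(\lambda):=\lambda I_{2k}-\diag(L(a_1),\ldots,L(a_k))$, block-inversion gives
$$D(\lambda)\inv = \diag\!\left(\tfrac{1}{\lambda^2+a_j^2}\begin{pmatrix}\lambda & -a_j\\ a_j & \lambda\end{pmatrix}\right)_{j=1}^{k},$$
and the key observation is that in the quadratic form $Y^T D(\lambda)\inv Y$ the antisymmetric cross-terms $\pm a_j y_{2j-1}y_{2j}$ cancel, leaving $Y^TD(\lambda)\inv Y=\lambda\sum_j c_j^2/(\lambda^2+a_j^2)$ with $c_j^2:=y_{2j-1}^2+y_{2j}^2$. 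The Schur formula and the matrix determinant lemma then yield
$$\det(\lambda I-A) = \lambda\prod_{j=1}^{k}(\lambda^2+a_j^2) + \lambda\sum_{j=1}^{k}c_j^2\prod_{i\neq j}(\lambda^2+a_i^2).$$
Setting this equal to $\det(\lambda I-S)$ and substituting $\mu=\lambda^2$ reduces the problem to the polynomial identity
$$\prod_{j=1}^{k}(\mu+b_j^2) - \prod_{j=1}^{k}(\mu+a_j^2) = \sum_{j=1}^{k}c_j^2\prod_{i\neq j}(\mu+a_i^2).$$
When the $a_m^2$ are pairwise distinct, Lagrange interpolation at $\mu=-a_m^2$ gives the unique solution
$$c_m^2 = \frac{\prod_{j=1}^{k}(b_j^2-a_m^2)}{\prod_{i\neq m}(a_i^2-a_m^2)}.$$

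Next I show that (\ref{ineqodd}) is equivalent to the non-negativity of every $c_m^2$. Under (\ref{ineqodd}), the factor $b_j^2-a_m^2$ is non-negative precisely when $j\leq m$, and $a_i^2-a_m^2$ is non-negative precisely when $i<m$; hence the numerator and denominator of $c_m^2$ each contain exactly $k-m$ non-positive factors, whose signs cancel, giving $c_m^2\geq 0$. Setting $y_{2m-1}:=\sqrt{c_m^2}$ and $y_{2m}:=0$ then produces the required $Y$. Conversely, any violation of (\ref{ineqodd}) forces a sign mismatch at some $m$, making $c_m^2<0$, which proves (1). For (2), if $a_j=b_m$ for the unique such $j$, the factor $b_m^2-a_j^2$ vanishes in the numerator of $c_j^2$; the uniqueness assumption together with the Weyl-chamber inequalities excludes $a_i^2=a_j^2$ for $i\neq j$ in the denominator, so $c_j=0$, i.e.\ $y_{2j-1}=y_{2j}=0$.

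The main obstacle will be the degenerate case where $a_i^2=a_j^2$ for some $i\neq j$, in which the explicit formula for $c_m^2$ is of the form $0/0$. I intend to handle this by a continuity argument: perturb $(a_j, b_j)$ within the closed region defined by (\ref{ineqodd}) so that the squares become pairwise distinct, construct $Y$ as above, and pass to the limit. The coefficients of $\det(\lambda I-A)$ depend polynomially on $Y$, so the matching of characteristic polynomials is preserved; combined with the fact that the $SO(2k+1)$-orbit of $S$ is closed in $\lie{so}(2k+1)$, this produces the required conjugacy in the limit, and the vanishing asserted in (2) is preserved as well.
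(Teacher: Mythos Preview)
Your approach is essentially the same as the paper's: reduce to matching characteristic polynomials (the paper simply states that $SO(2k+1)$-orbits are determined by the characteristic polynomial; your $-I$ observation makes this explicit), arrive at the same identity in the unknowns $w_l=y_{2l-1}^2+y_{2l}^2$, and solve it. Your Lagrange interpolation at $\mu=-a_m^2$ is exactly the inversion of the Cauchy matrix $\bigl[1/(b_s^2-a_l^2)\bigr]$ that the paper carries out via Schechter's formula, and your sign count matches the paper's.

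The only substantive difference is the degenerate case $a_i^2=a_j^2$. The paper proceeds by explicit reduction: cancel the common factor $(t^2+b_m^2)$ from both sides, relabel, and iterate to a regular problem of smaller size. Your continuity argument is slicker but has a gap as written: to pass to the limit you must know that the perturbed vectors $Y$ stay bounded, which you do not justify. This is easy to repair---comparing the $\mu^{k-1}$ coefficients in your polynomial identity gives $\sum_j c_j^2=\sum_j b_j^2-\sum_j a_j^2$, which bounds each $c_j^2$ uniformly along the perturbation; then a subsequence of the $Y$'s converges and the limiting $A$ has the correct characteristic polynomial, hence lies in the orbit of $S$.

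One further small point on (2): your claim that uniqueness of $j$ rules out $a_i^2=a_j^2$ for $i\neq j$ fails in the edge case $j=k-1$, $a_{k-1}=b_k>0$, $a_k=-a_{k-1}$ (here $a_k$ equals no $b_m$, yet $a_k^2=a_{k-1}^2$). Your continuity argument still covers this, since one may perturb $a_k$ alone while keeping $a_{k-1}=b_k$, so that $c_{k-1}^2=0$ identically along the perturbation.
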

Here is the even dimensional analogue.
\begin{lemma} \label{openeven}For any real numbers \begin{equation}\label{ineqeven}
 a_1 \geq b_1 \geq a_2 \geq b_2\geq \ldots \geq b_{k-1} \geq |a_{k}|\end{equation} there exist a real vector $Y=[y_1, \ldots, y_{2k-1}]^T$ in $\bb{R}$ 
such that
the skew symmetric matrices 
\begin{displaymath}
A:=\left(\begin{array}{c|c}
\begin{array}{ccccc}
  L(b_1) &  && &\\
 &L(b_2)   && &\\
& &\ddots & &\\
&&& L(b_{k-1})&\\&&&&0
    \end{array} & Y\\
\hline
-Y^T &0 
\end{array}
\right)\textrm{  and  }\left(
\begin{array}{cccc}
  L(a_1) &  & &\\
 &L(a_2)   & &\\
& &\ddots & \\
&&& L(a_k)
    \end{array} 
\right).
\end{displaymath} are in the same $SO(2k)$ orbit. Moreover, \\
(1) if $a_j,b_j$ are not satisfying inequalities (\ref{ineqeven}), then such $Y$ does not exist,\\
(2) if $j$ is the unique index from $1,\ldots, k$ such that $b_j=a_m$ for some $m$, then $y_{2j-1}=y_{2j}=0$. \end{lemma}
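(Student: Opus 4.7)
The plan is to reduce to the classical Cauchy interlacing theorem for Hermitian matrices via the correspondence $M\mapsto iM$ that sends a real skew-symmetric matrix to a Hermitian one. Under this correspondence $iA$ is Hermitian with the same spectrum as $iD := i\,\textrm{diag}(L(a_1),\dots,L(a_k))$, namely $\{\pm a_1,\dots,\pm a_{k-1},\pm|a_k|\}$, while its $(2k-1)\times(2k-1)$ top-left principal submatrix $iS$ is block diagonal with Hermitian blocks $iL(b_j)$ (eigenvalues $\pm b_j$) plus a trailing $0$, yielding spectrum $\{\pm b_1,\dots,\pm b_{k-1},0\}$. Each block $iL(b_j)$ is diagonalized in the basis $v_j^{\pm} = \tfrac{1}{\sqrt{2}}(e_{2j-1}\pm i e_{2j})$.

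For necessity (part (1)), applying Cauchy interlacing to the Hermitian pair $(iA,iS)$ and ordering eigenvalues decreasingly yields exactly the chain (\ref{ineqeven}). For sufficiency I pass to the eigenbasis of $iS$ and write out the secular equation for the rank-two Hermitian perturbation given by the last row/column. The coefficients of $Y$ along $v_j^{\pm}$ are $\tfrac{1}{\sqrt{2}}(y_{2j-1}\mp i y_{2j})$, so $|\tilde v_j^+|^2 = |\tilde v_j^-|^2 = \tfrac12(y_{2j-1}^2+y_{2j}^2)$, and the symmetry $\lambda\leftrightarrow -\lambda$ of the target spectrum is automatic. Setting $\rho_j = y_{2j-1}^2+y_{2j}^2$ for $j<k$ and $\rho_k = y_{2k-1}^2$, substituting $\mu = \lambda^2$ and rearranging collapses the secular equation to
\begin{equation*}
\mu - \Big(\rho_k + \sum_{j=1}^{k-1}\rho_j\Big) = \sum_{j=1}^{k-1}\frac{\rho_j b_j^2}{\mu - b_j^2},
\end{equation*}
which is the secular equation for a $k\times k$ Hermitian rank-one perturbation of $\textrm{diag}(b_1^2,\dots,b_{k-1}^2)$ whose desired eigenvalues are $a_1^2\geq\dots\geq a_k^2$. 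The squared version of (\ref{ineqeven}), namely $a_1^2\geq b_1^2\geq\dots\geq b_{k-1}^2\geq a_k^2\geq 0$, is precisely the classical Cauchy condition guaranteeing that the L\"owner formulas produce $\rho_j b_j^2\geq 0$ and $\rho_k = \prod a_l^2/\prod b_j^2\geq 0$. I then recover a real $Y$ with the prescribed $\rho_j$'s; the residual sign freedom in $y_{2k-1}=\pm\sqrt{\rho_k}$ is fixed by the Pfaffian condition $\textrm{Pf}(A)=a_1\cdots a_k$, which pins down the sign of $a_k$ relative to the Weyl-chamber representative.

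For part (2), the uniqueness of $j$ forces $b_j$ to be a simple eigenvalue of $iS$. Substituting $\lambda = b_j$ into the characteristic polynomial
\begin{equation*}
\det(\lambda I - iA) = \lambda\prod_i(\lambda-\mu_i) - \sum_i |\tilde v_i|^2\prod_{l\neq i}(\lambda-\mu_l)
\end{equation*}
leaves only the term $-|\tilde v_j^+|^2\prod_{l\neq j}(b_j-\mu_l)$, with the remaining product nonzero by simplicity. Since $b_j=a_m$ is an eigenvalue of $iA$, the left side vanishes, forcing $|\tilde v_j^+|^2 = 0$, i.e.\ $y_{2j-1}=y_{2j}=0$.

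The main obstacle will be the careful bookkeeping in the sufficiency step: verifying the collapse to a genuine $k\times k$ Cauchy problem, checking that the L\"owner formula produces nonnegative $\rho_j$'s under exactly (\ref{ineqeven}), and pinning down a real $Y$ inside the $SO(2k)$-orbit (as opposed to merely the $O(2k)$-orbit) via the Pfaffian sign. All of these are standard once set up, but require close attention to sign conventions and to the role of the trailing $0$ entry that distinguishes the even case from Lemma \ref{openodd}.
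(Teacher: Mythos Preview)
Your approach is correct and shares the paper's overall strategy—pass to Hermitian matrices via $M\mapsto iM$ and invoke the bordered/Cauchy inverse eigenvalue theory—but the execution differs in two places worth noting. The paper cites a pre-packaged $2k$-dimensional Hermitian lemma (its Lemma~\ref{open}) and then performs explicit unitary conjugations by block matrices $J,L$ to arrange that the auxiliary complex vector $X$ yields a \emph{real} $Y$; your substitution $\mu=\lambda^2$ collapses the problem directly to a $k$-dimensional bordered system with diagonal $(b_1^2,\dots,b_{k-1}^2)$, which is cleaner and bypasses those matrix manipulations. For the $SO(2k)$-versus-$O(2k)$ issue the paper simply replaces $Y$ by $-Y$ if $A$ lands in the wrong orbit, whereas your Pfaffian argument—flipping only $y_{2k-1}$ negates $\mathrm{Pf}(A)$ without touching the characteristic polynomial—is more conceptual and makes the role of the sign of $a_k$ transparent.

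Two small points to tighten. What you call a ``$k\times k$ rank-one perturbation of $\mathrm{diag}(b_1^2,\dots,b_{k-1}^2)$'' is really the $k\times k$ bordered (arrow) matrix whose principal $(k-1)\times(k-1)$ block is that diagonal; the L\"owner formula then gives $|v_j|^2=\rho_j b_j^2\ge 0$, so concluding $\rho_j\ge 0$ needs $b_j\ne 0$. This holds in the regular case; for degenerate $b$'s you will need the same kind of reduction the paper carries out for Lemma~\ref{openodd}. Similarly, in part~(2) your claim that $b_j$ is a simple eigenvalue of $iS$ requires not only $b_j\ne b_i$ for $i\ne j$ (which does follow from uniqueness via interlacing) but also $b_j\ne 0$, so that $b_j$ does not collide with $-b_j$ or with the trailing $0$; the exceptional case $b_{k-1}=a_k=0$ should be handled separately.
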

\subsection{The polytope for $SO(2n+1)$.}
Now we are ready to describe the image of the Gelfand-Tsetlin functions for the case $G=SO(2n+1)$, in $\bb{R}^{n^2}$.
Let $\{x^{(k)}_j\,| 1\leq k \leq 2n, \, 1\leq j \leq \lfloor \frac k 2 \rfloor \}$ be basis of $\bb{R}^{n^2}$.
\begin{proposition}\label{polytopeodd}
 For $SO(2n+1)$ the image of the Gelfand-Tsetlin functions $\Lambda: \cc{O}_{\lambda} \rightarrow \bb{R}^{n^2}$ 
is the polytope, which we will denote by $\cc{P}$, defined by the following set of inequalities
\begin{equation}\label{polytopeineqodd}
\begin{cases}
\,\,\,x^{(2k)}_{1} \geq x^{(2k-1)}_{1} \geq x^{(2k)}_{2} \geq x^{(2k-1)}_{2} \geq \ldots \geq x^{(2k)}_{k-1} \geq x^{(2k-1)}_{k-1} \geq |x^{(2k)}_{k}| ,\\
\,\,\,x^{(2k+1)}_{1} \geq x^{(2k)}_{1} \geq x^{(2k+1)}_{2} \geq x^{(2k)}_{2} \geq \ldots \geq x^{(2k+1)}_{k} \geq |x^{(2k)}_{k}| ,\\
\end{cases}
\end{equation}
for all $k=1,\ldots,n$, where $x^{(2n+1)}_j=\lambda_j$. 
\end{proposition}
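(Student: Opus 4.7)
The plan is to prove the two inclusions $\Lambda(\mathcal{O}_\lambda) \subseteq \mathcal{P}$ and $\mathcal{P} \subseteq \Lambda(\mathcal{O}_\lambda)$ by working through the chain $SO(2n+1)\supset SO(2n)\supset\ldots\supset SO(2)$ one level at a time using Lemmas \ref{openodd} and \ref{openeven}. The essential observation is that $\Phi^k(A)$ is literally the top-left $k\times k$ block of $\Phi^{k+1}(A)$, so passing between consecutive levels exactly matches the hypotheses of one of the two lemmas, with the parity of $k$ choosing which one applies.

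For the inclusion $\Lambda(\mathcal{O}_\lambda)\subseteq \mathcal{P}$, fix $A \in \mathcal{O}_\lambda$ and a level $k$. Choose $B \in SO(k)$ bringing $\Phi^k(A)$ into standard positive-Weyl-chamber form with $L(\lambda^{(k)}_j)$-blocks on the diagonal. Conjugating $\Phi^{k+1}(A)$ by $\mathrm{diag}(B,1)\in SO(k+1)$ produces a skew-symmetric matrix whose top-left $k\times k$ block is in standard form and which still lies in the $SO(k+1)$-orbit of the standard level-$(k{+}1)$ matrix with $L(\lambda^{(k+1)}_j)$-blocks. When $k$ is even this is exactly the setup of Lemma \ref{openodd} with $a_j=\lambda^{(k)}_j$, $b_j=\lambda^{(k+1)}_j$, so part (1) of that lemma forces the interlacing inequalities. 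When $k$ is odd Lemma \ref{openeven} gives the analogous conclusion. Running $k$ from $1$ to $2n$ produces every inequality in (\ref{polytopeineqodd}).

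For the reverse inclusion, fix $(x^{(k)}_j)\in\mathcal{P}$ and construct $A\in\mathcal{O}_\lambda$ with $\Lambda(A)=(x^{(k)}_j)$ by descending induction. Start with $A_{2n+1} := \mathrm{diag}(L(\lambda_1),\ldots,L(\lambda_n),0)\in\mathcal{O}_\lambda$. By Lemma \ref{openodd} applied with $a_j = x^{(2n)}_j$ and $b_j=\lambda_j$ (the interlacing hypothesis is supplied by $(x^{(k)}_j)\in\mathcal{P}$), $A_{2n+1}$ is in the same orbit as a matrix $A'_{2n}$ whose top-left $2n\times 2n$ block is $\mathrm{diag}(L(x^{(2n)}_j))$; hence $A'_{2n} \in \mathcal{O}_\lambda$ and $\lambda^{(2n)}(A'_{2n})=x^{(2n)}$. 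Next apply Lemma \ref{openeven} to that $2n\times 2n$ block to obtain $B\in SO(2n)$ whose conjugation sends $\mathrm{diag}(L(x^{(2n)}_j))$ to a matrix whose top-left $(2n-1)\times(2n-1)$ sub-block has the standard form with entries $x^{(2n-1)}_j$. Conjugating $A'_{2n}$ by $\mathrm{diag}(B,1)\in SO(2n+1)$ stays in $\mathcal{O}_\lambda$ and achieves $\lambda^{(2n-1)}=x^{(2n-1)}$. Iterate, alternating between the two lemmas, down to level $2$.

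The only point needing care is that each downward step must preserve the Gelfand-Tsetlin values already fixed at higher levels. This is automatic for conjugation by $\mathrm{diag}(B, I_{2n+1-k})$ with $B\in SO(k)$: the resulting transformation acts on $\Phi^j$ for every $j\geq k$ by conjugation by an element of $SO(j)$, so $\lambda^{(j)}=s_j\circ\Phi^j$ is unchanged. The main obstacle is really just packaging the two lemmas into this iterative scheme cleanly; once the bookkeeping is set up, parts (1) of Lemmas \ref{openodd} and \ref{openeven} are precisely calibrated to the defining inequalities of $\mathcal{P}$, so the existence of the required matrix at each step is automatic, and polytope-ness of $\Lambda(\mathcal{O}_\lambda)$ follows from the fact that we have identified it with the solution set of finitely many linear inequalities.
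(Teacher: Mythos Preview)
Your argument is correct and follows the same iterative scheme as the paper: alternate applications of Lemmas~\ref{openodd} and~\ref{openeven} down the chain $SO(2n+1)\supset SO(2n)\supset\cdots\supset SO(2)$, using the existence statements for $\mathcal{P}\subseteq\Lambda(\mathcal{O}_\lambda)$ and part~(1) of each lemma for the reverse containment. If anything, you are more explicit than the paper, which only spells out the surjectivity direction and leaves the inclusion $\Lambda(\mathcal{O}_\lambda)\subseteq\mathcal{P}$ and the preservation of higher-level values to the reader.
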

\begin{proof}
The above proposition follows from consecutive applications of Propositions \ref{openodd} and \ref{openeven}.
We will show only the first two steps as the next ones are analogous.
(Similar procedure for the unitary case is described in the proof of Proposition 3.5 in \cite{P}.)\\
\indent Take any sequence of numbers $\{x^{(l)}_j\}$ satisfying inequalities (\ref{polytopeineqodd}). Lemma \ref{openodd} implies that there exist a real vector $Y_1$ 
such that the matrix 
\small
\begin{displaymath}
A_1:=\left(\begin{array}{c|c}
\begin{array}{cccc}
  L(x^{(2n)}_{1}) &  & &\\
 &L(x^{(2n)}_{2})   & &\\
& &\ddots & \\
&&& L(x^{(2n)}_{n})
    \end{array} & Y_1\\
\hline
-Y_1^T &0 
\end{array}
\right)\end{displaymath}
\normalsize is in the same $SO(2k+1)$ orbit as $\lambda$, i.e. $B_1\,A_1\,B_1\inv=\lambda$ for some matrix $B_1 \in SO(2n+1)$.
Now we apply Lemma \ref{openeven} to find a real vector $Y_2$ and a matrix $B_2 \in SO(2n)$ such that for the matrix
\small
\begin{displaymath}
A_2:=\left(\begin{array}{c|c}
\begin{array}{ccccc}
  L(x^{(2n-1)}_{1}) &  && &\\
 &L(x^{(2n-1)}_{2})   && &\\
& &\ddots & &\\
&&& L(x^{(2n-1)}_{n-1})&\\&&&&0
    \end{array} & Y_2\\
\hline
-Y_2^T &0 
\end{array}
\right)\end{displaymath}
\normalsize we have
$$B_2 A_2 B_2 \inv =\left(\begin{array}{cccc}
  L(x^{(2n)}_{1}) &  & &\\
 &L(x^{(2n)}_{2})   & &\\
& &\ddots & \\
&&& L(x^{(2n)}_{n})
    \end{array} \right).$$
Therefore the matrix 
\begin{displaymath}
\left(\begin{array}{c|c}
A_2 & B_2\inv Y_1\\
\hline
-Y_1^T B_2  &0 
\end{array}
\right)\end{displaymath}
has desired values of the Gelfand-Tsetlin functions $x^{(2n)}_*,x^{(2n-1)}_*$ and is in $\cc{O}_{\lambda}$ as
\begin{displaymath}B_1\,\left(\begin{array}{c|c}
B_2 & \\
\hline
 &1
\end{array}
\right)\,
\left(\begin{array}{c|c}
A_2 & B_2\inv Y_1\\
\hline
-Y_1^T B_2  &0 
\end{array}
\right)\,\left(\begin{array}{c|c}
B_2\inv & \\
\hline
 &1
\end{array}
\right)\,B_1\inv\end{displaymath}
\begin{displaymath}=B_1\,\left(\begin{array}{c|c}
B_2 A_2 B_2 \inv &  Y_1\\
\hline
-Y_1^T  &0 
\end{array}
\right)\,B_1 \inv =B_1\, A_1 \,B_1 \inv=\lambda. \end{displaymath}
Succesively repeating similar steps, one can construct a matrix in $\cc{O}_{\lambda}$ with prescribed values of Gelfand-Tsetlin 
functions if only these values satisfy inequalities
(\ref{polytopeineqodd}).
\end{proof}
We can think of the Gelfand-Tsetlin polytope as the set of points whose coordinates fit into the following triangle of inequalities. 
Let the first row be given by $\lambda_1, \ldots, \lambda_n$ (or $|\lambda_n|$ in $SO(2n)$ case). Form next rows from the coordinates with the same superscript so that top left and right left neighbors of the coordinate $x^{(k)}_{j}$ are $x^{(k+1)}_{j}$ and $x^{(k+1)}_{j+1}$. The value of $x^{(k)}_{j}$ must be between the values of its top left and top right neighbors. \begin{center}\label{triangleofinequalities}
\includegraphics{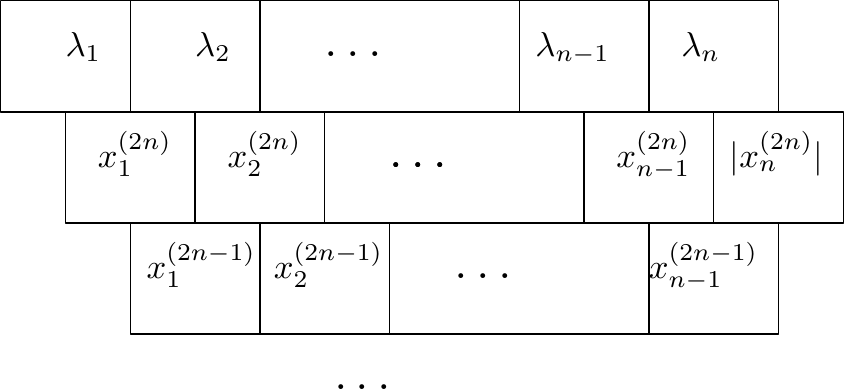}
\end{center}
\subsection{The polytope for $SO(2n)$.} 
Situation for $G=SO(2n)$ is very similar. Let $\{x^{(k)}_j\,| 1\leq k \leq 2n-1, \, 1\leq j \leq \lfloor \frac k 2 \rfloor \}$ be basis of $\bb{R}^N=\bb{R}^{n(n-1)}$.
\begin{proposition}\label{polytopeeven}
 For $SO(2n)$ the image of the Gelfand-Tsetlin functions $\Lambda: \cc{O}_{\lambda} \rightarrow \bb{R}^{n(n-1)}$ 
is the polytope, which we will denote by $\cc{P}$, defined by the following set of inequalities
\begin{equation}\label{polytopeineqeven}
\begin{cases}
\,\,\,x^{(2k)}_{1} \geq x^{(2k-1)}_{1} \geq x^{(2k)}_{2} \geq x^{(2k-1)}_{2} \geq \ldots \geq x^{(2k)}_{k-1} \geq x^{(2k-1)}_{k-1} \geq |x^{(2k)}_{k}| ,\\
\,\,\,x^{(2k+1)}_{1} \geq x^{(2k)}_{1} \geq x^{(2k+1)}_{2} \geq x^{(2k)}_{2} \geq \ldots \geq x^{(2k+1)}_{k} \geq |x^{(2k)}_{k}| ,\\
\end{cases}
\end{equation}
for all $k=1,\ldots,n$, where $x^{(2n)}_j=\lambda_j$ for $j=1,\ldots, n$. 
\end{proposition}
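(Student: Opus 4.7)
The plan is to mirror the proof of Proposition \ref{polytopeodd}, building a matrix in $\mathcal{O}_\lambda$ with any prescribed Gelfand-Tsetlin values by iteratively applying Lemmas \ref{openodd} and \ref{openeven}. The only structural change from the odd case is that $\lambda$ now sits in $\mathfrak{so}(2n)^*$, so the first descent step (from level $2n$ down to level $2n-1$) uses Lemma \ref{openeven} instead of Lemma \ref{openodd}; thereafter the two lemmas alternate exactly as in the odd case.

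The inclusion $\Lambda(\mathcal{O}_\lambda) \subseteq \mathcal{P}$ is immediate from parts (1) of Lemmas \ref{openodd} and \ref{openeven}: for any $A \in \mathcal{O}_\lambda$ the top-left $k \times k$ and $(k-1) \times (k-1)$ submatrices are related as in those lemmas, so their Weyl-chamber representatives $\Lambda^{(k)}(A)$ and $\Lambda^{(k-1)}(A)$ must satisfy the interlacing inequalities (\ref{polytopeineqeven}).

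For the reverse inclusion $\mathcal{P} \subseteq \Lambda(\mathcal{O}_\lambda)$, I would start from prescribed data $\{x^{(k)}_j\}$ satisfying (\ref{polytopeineqeven}) and inductively construct a matrix in $\mathcal{O}_\lambda$ realizing it. The first step applies Lemma \ref{openeven} with $k=n$, $a_j = \lambda_j$, $b_j = x^{(2n-1)}_j$ to obtain $Y_1$ and $B_1 \in SO(2n)$ such that the $2n \times 2n$ matrix
\[
A_1 := \begin{pmatrix} \diag(L(x^{(2n-1)}_1), \ldots, L(x^{(2n-1)}_{n-1}), 0) & Y_1 \\ -Y_1^T & 0 \end{pmatrix}
\]
satisfies $B_1 A_1 B_1\inv = \lambda$. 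The second step applies Lemma \ref{openodd} with $k=n-1$, $b_j = x^{(2n-1)}_j$, $a_j = x^{(2n-2)}_j$ to produce $Y_2$ and $B_2 \in SO(2n-1)$ conjugating a $(2n-1) \times (2n-1)$ matrix $A_2$, whose top-left $(2n-2) \times (2n-2)$ block is $\diag(L(x^{(2n-2)}_1), \ldots, L(x^{(2n-2)}_{n-1}))$, onto the top-left $(2n-1)$-block of $A_1$. Assembling these exactly as in Proposition \ref{polytopeodd}, the matrix
\[
M := \begin{pmatrix} A_2 & B_2\inv Y_1 \\ -Y_1^T B_2 & 0 \end{pmatrix}
\]
lies in $\mathcal{O}_\lambda$ and has the prescribed values of $x^{(2n-1)}_*$ and $x^{(2n-2)}_*$. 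Iterating the same alternation down the triangle of inequalities yields a matrix realizing the full data.

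The only real obstacle is the usual bookkeeping: at each step one must verify that conjugating by $\diag(B_j, I)$ does not disturb Gelfand-Tsetlin values already fixed at higher levels. This is automatic, since $\diag(B_j, I)$ acts trivially outside the top-left block being modified and is chosen so that the block whose Weyl-chamber representative has just been fixed transforms correctly. Because all of this is structurally identical to the odd case, no genuinely new difficulty arises here.
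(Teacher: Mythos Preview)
Your proposal is correct and is exactly the argument the paper has in mind: the paper's own proof consists of the single sentence ``Analogous to the proof of Proposition \ref{polytopeodd},'' and you have faithfully spelled out that analogy, correctly noting that the first descent step uses Lemma \ref{openeven} (since $\lambda \in \mathfrak{so}(2n)^*$) and that Lemmas \ref{openodd} and \ref{openeven} then alternate thereafter.
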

\begin{proof}
 Analogous to the proof of Proposition \ref{polytopeodd}.
\end{proof}
Here we also can present these inequalities in the form of a triangle of inequalities similar to the $SO(2n+1)$ case above.

\section{Isotropy weights of the Gelfand-Tsetlin action}\label{edges}
Notice that $\Lambda(\lambda)$ is a vertex of $\cc{P}$. This is because at this point all the Gelfand-Tsetlin functions are equal to their upper bounds. If on the triangle of inequalities we connect by a line all coordinates of $\Lambda(\lambda)$ with the same values, then we obtain the picture in Figure \ref{triangleforvertexlambda}.
\begin{figure}[h]\label{triangleforvertexlambda}
 \includegraphics[width=.3\textwidth]{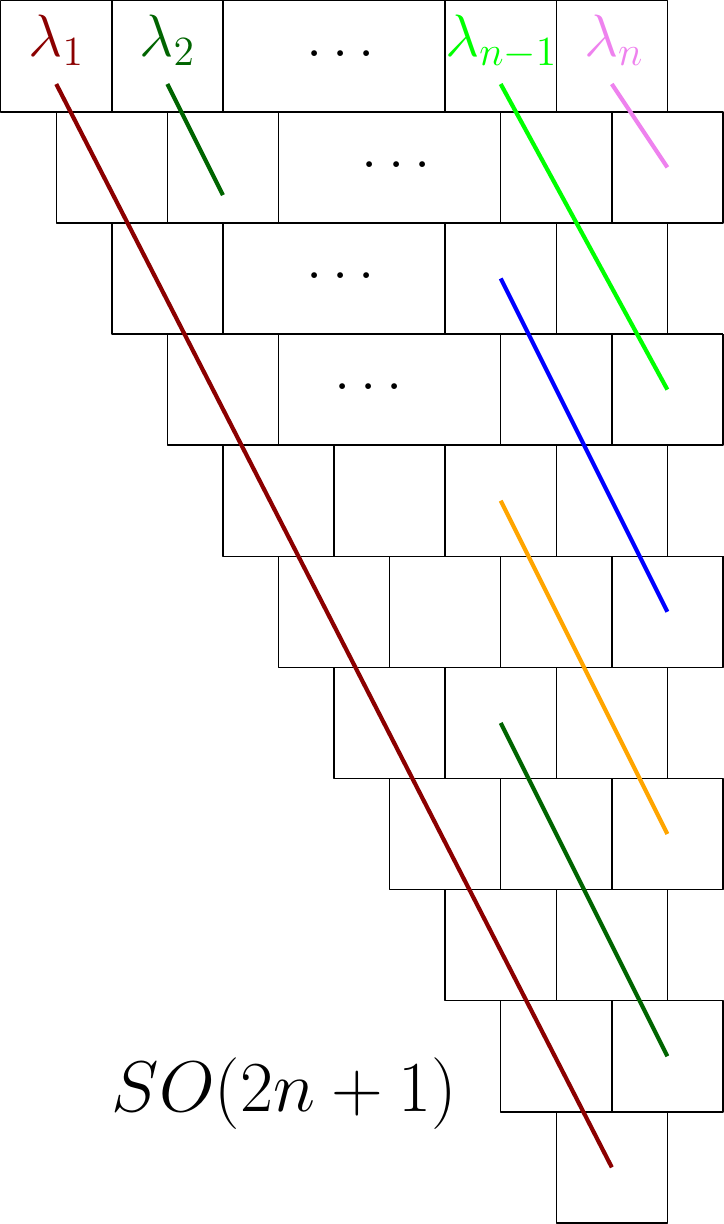} \hspace{20mm}
\includegraphics[width=.3\textwidth]{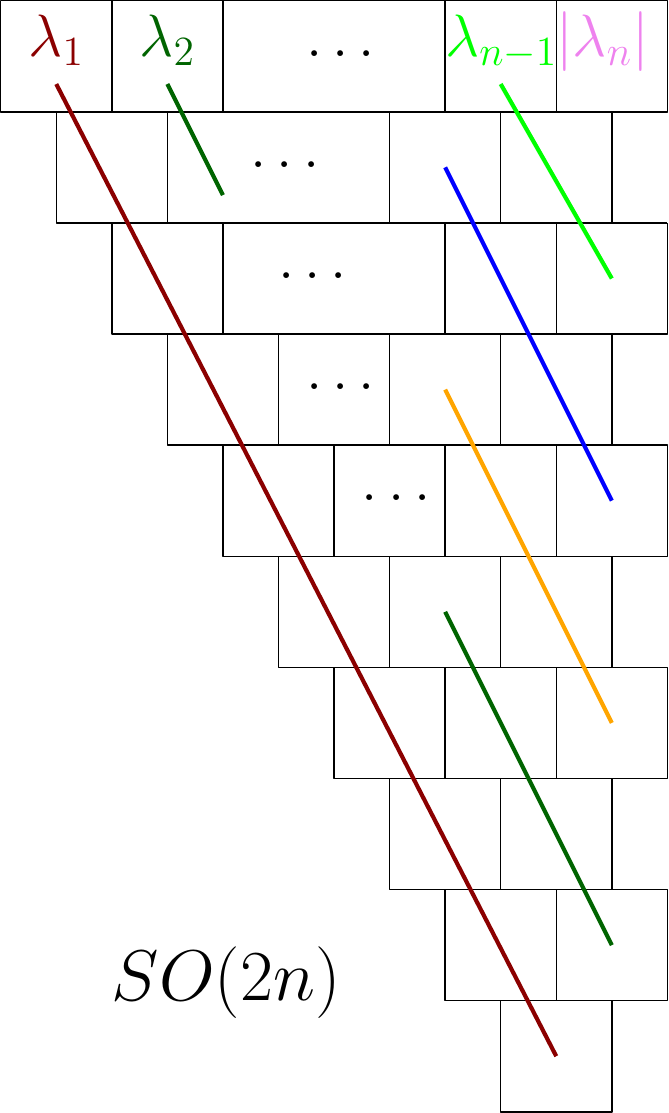}
\caption{Triangles of inequalities for $\Lambda(\lambda)$ in $G=SO(2n+1)$ and $G=SO(2n)$ cases.}
\label{triangleforvertexlambda}
\end{figure}

We will analyze edges starting from $\Lambda(\lambda)$. For more details about identifying vertices and edges of the Gelfand-Tsetlin polytope, see 
Lemmas 3.7 and 3.10 in \cite{P} or \cite{Zi}. Basically, to obtain an edge starting from $\Lambda(\lambda)$, we pick one of the inequalities defining $\cc{P}$ that are equations at $\Lambda(\lambda)$, and consider the set of points in $\cc{P}$ satisfying all the same equations that $\Lambda(\lambda)$ satisfies, except possibly this chosen one. It is important to note that in this way we obtain ALL the edges starting from $\Lambda(\lambda)$. This procedure may not work if instead of $\Lambda(\lambda)$ we analyze a vertex $V'$ of $\cc{P}$ such that $\Lambda\inv (V')$ is not in a subset of $U$. 

Pick any $k \in \{1, \dots ,n\} $ for $G=SO(2n+1)$, or $k \in \{1, \dots ,n-1\} $ for $G=SO(2n)$, and $j \in \{1, \dots k\}$. Consider the set $E:=E^{(2k)}_{j}$, that is the image of points where all the Gelfand-Tsetlin functions are equal to their upper bound, apart from the function $\lambda^{(2k)}_{j}$. That is, $E$ is the line segment consisting of points ${\bf x} \in \bb{R}^N$ satisfying
 \begin{align}\label{edgeeven}
x^{(m)}_{l}&=\lambda_l\textrm{ for all }m\textrm{ and for all }l\neq j,\notag\\
x^{(m)}_{j}&=\lambda_j\textrm{ for all }m > 2k,\notag \\
x^{(m)}_{j}&=x^{(2k)}_{j} \textrm{ for all }2j \leq m \leq 2k,\\
x^{(2k)}_{j}&\in [ \lambda_{j+1},\lambda_j ] \textrm{ if }j < k,\notag\\
x^{(2k)}_{j}&\in [-\lambda_{k},\lambda_k ] \textrm{ if }j=k.\notag
 \end{align}
 The following graphical presentation (of the case $j<k$) can be helpful.
\begin{equation*}
  \begin{array}{ccccccccccccc}
 \lambda_{j-1}&&&\lambda_{j}&&&\lambda_{j+1}&&&&&&\\
&\begin{rotate}{135}=  \end{rotate} &&&\begin{rotate}{135} = \end{rotate}&&&\begin{rotate}{135} = \end{rotate}&&&&\\
&& x^{(2k+1)}_{j-1}&&&x^{(2k+1)}_{j}&&&x^{(2k+1)}_{j+1}&&&\\
&&& \begin{rotate}{135} = \end{rotate} &&&\begin{rotate}{-45} $>$ \end{rotate}&&&\begin{rotate}{135} = \end{rotate}&&\\
&&&& x^{(2k)}_{j-1}&&&x^{(2k)}_{j}&&&x^{(2k)}_{j+1}&\\
&&&&&\begin{rotate}{135} = \end{rotate} &&&\begin{rotate}{135} = \end{rotate}&&&\begin{rotate}{135} = \end{rotate}\\
&&&&&& x^{(2k-1)}_{j-1}&&&x^{(2k-1)}_{j}&&&x^{(2k-1)}_{j+1}\\
 \end{array}
\end{equation*}
\normalsize
The set $E$ is an edge of $\cc{P}$. Proof of this fact in nearly identical as in the unitary case, described in Lemmas 3.7 and 3.10 of \cite{P}. The vertex $\Lambda(\lambda)$ belongs to $E$. Denote by $\eo{E}$ the half open line segment: $E$ minus the other endpoint, i.e. $\eo{E}=\Lambda(\lambda)\cup \textrm{ int }E$. 
From the definition of $U$ it follows that if $q \in U$ and $\Lambda(q)$ belongs to a face $\cc{F}$ of the polytope $\cc{P}$, then $\Lambda \inv (\textrm{int }\cc{F})$ is in $U$.
Therefore $\Lambda \inv (\eo{E})$ is also contained in $U$ and is equipped with a smooth action of the Gelfand-Tsetlin torus. Below we analyze carefully which matrices are in $\Lambda \inv (\eo{E})$.
\begin{lemma}\label{spherea}
 $\Lambda \inv (\eo{E})$ is a disc invariant under the action of the Gelfand-Tsetlin torus.
\end{lemma}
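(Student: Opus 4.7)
The plan is to use Lemmas \ref{openodd} and \ref{openeven}, applied inductively along the chain of subgroups, to give an explicit matrix description of $\Lambda\inv(\eo{E})$. $T_{GT}$-invariance is immediate: each Gelfand--Tsetlin function $\lambda^{(m)}_{l}$ is, on $U$, a moment-map component for the $*$-action of the corresponding torus factor, and is therefore invariant under the full $T_{GT}$-action. Since $\eo{E}$ is cut out of $\cc{P}$ by requiring every $\lambda^{(m)}_{l}$ to equal its value at $\lambda$ except along the one-parameter family $\{x^{(m)}_{j}=t\}_{2j\le m\le 2k}$, the preimage $\Lambda\inv(\eo{E})$ is a union of level sets of $\Lambda$ and is thus $T_{GT}$-stable.

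For the disc structure, I would take $A\in\Lambda\inv(\eo{E})$ and peel off its top-left submatrices from the top down, applying Lemmas \ref{openodd} and \ref{openeven} alternately. The critical input is part~(2) of those lemmas: whenever a lower-level eigenvalue matches an upper-level eigenvalue uniquely, the associated pair of entries of the connecting vector $Y$ must vanish. At every level strictly above $2k$ all eigenvalues equal the corresponding $\lambda_{l}$, so every pair of $Y$-entries vanishes and $A$ is block-diagonal down to $A_{2k+1}$. At the transition from $2k+1$ to $2k$ the eigenvalue $\lambda_{j}$ is replaced by the edge parameter $t$, and part~(2) forces $y_{2l-1}=y_{2l}=0$ for $l\neq j$ while leaving the pair $(y_{2j-1},y_{2j})$ free; the eigenvalue equation on the $3\times 3$ sub-block at rows/columns $\{2j-1,\,2j,\,2k+1\}$ yields the single quadratic relation $y_{2j-1}^{2}+y_{2j}^{2}+t^{2}=\lambda_{j}^{2}$. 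At every transition below $2k$ the entry $t$ appears in the same slot on both levels, so part~(2) eliminates every remaining pair of $Y$-entries, and any leftover odd-indexed entry is rigidly determined by the eigenvalue equation.

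Consequently the matrices in $\Lambda\inv(\eo{E})$ are parameterized by a triple $(y_{2j-1},y_{2j},t)$ on the sphere $\{y_{2j-1}^{2}+y_{2j}^{2}+t^{2}=\lambda_{j}^{2}\}\subset\bb{R}^{3}$, with $t$ restricted to the relevant subinterval. Restricting to $\eo{E}$ removes only the opposite endpoint of $E$ from the admissible range of $t$: a boundary circle of a spherical cap when $j<k$, or a single antipodal point in the ``long-edge'' case $j=k=n$. In either case what remains is homeomorphic to an open $2$-disc whose centre $(0,0,\lambda_{j})$ corresponds to the vertex $\Lambda(\lambda)$. Surjectivity of the parameterization onto $\Lambda\inv(\eo{E})$ follows from the existence part of Lemmas \ref{openodd} and \ref{openeven}. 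One circle factor of $T_{GT}$ acts by rotating $(y_{2j-1},y_{2j})$ while all other circle factors act trivially, because every entry they could rotate has been forced to zero.

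The main obstacle will be the bookkeeping in the descent: separating the cases $j<k$ and $j=k$, alternating between Lemmas \ref{openodd} and \ref{openeven} at odd--even and even--odd transitions respectively, and keeping careful track of the leftover odd-indexed entries. A secondary subtlety is verifying that the parameterization is injective on the matrices of $\cc{O}_{\lambda}$ themselves, rather than only on $T_{GT}$-orbit representatives, so that $\Lambda\inv(\eo{E})$ is a genuine disc and not some quotient or branched cover of one.
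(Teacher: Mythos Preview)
Your overall strategy matches the paper's: use Lemmas~\ref{openodd} and~\ref{openeven} to pin down the matrix entries level by level, and find that everything collapses to a single circle's worth of freedom over each value of the edge parameter. The identification of the relation $p_{2j-1}^{2}+p_{2j}^{2}=\lambda_{j}^{2}-t^{2}$ at the $(2k+1)\to 2k$ transition is exactly right.

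There is, however, a genuine gap at the $(2k+2)\to(2k+1)$ transition. Your claim that ``$A$ is block-diagonal down to $A_{2k+1}$'' is false. Part~(2) of Lemma~\ref{openeven} only applies to a matrix whose top $(2k+1)\times(2k+1)$ block is already in the standard form $\diag(L(b_1),\ldots,L(b_{k-1}),0)$; but $(A)_{2k+1}$ is \emph{not} in standard form---it contains the nonzero vector $P$ in its last column. Consequently the $(2k+2)$-th column of $A$ carries a genuinely nonzero vector $Y$ of length $2k+1$, and this $Y$ varies with $P$. (You can see this must happen: if the $(2k+2)$-th row and column vanished, then $(A)_{2k+2}$ would be $(A)_{2k+1}\oplus 0$ and would have $x^{(2k+2)}_{k+1}=0$, contradicting $x^{(2k+2)}_{k+1}=\lambda_{k+1}$.)

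The step you are missing---and it is the heart of the paper's argument---is this: choose $B\in SO(2k+1)$ with $B\,(A)_{2k+1}\,B^{-1}=(\lambda)_{2k+1}$, and then apply Lemma~\ref{openeven} to the conjugated matrix $\diag(B,1)\,(A)_{2k+2}\,\diag(B^{-1},1)$. One obtains $BY=(0,\ldots,0,\pm\lambda_{k+1})^{T}$, and since $B$ is determined by $P$ up to the maximal torus of $SO(2k+1)$ (which fixes the last coordinate axis), $Y$ is determined by $P$ up to the sign. Finally one checks that only one sign lands in the correct $SO(2k+2)$-orbit. This is precisely what guarantees that the preimage is a disc rather than, say, a disc times a two-point set or a circle; your sketch never addresses it. Your phrase ``any leftover odd-indexed entry is rigidly determined'' gestures in the right direction but is placed at the wrong level and does not capture the dependence of $Y$ on $B$ and hence on $P$.
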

To make the notation easier, we will write $A \sim B$ if $A$ can be conjugated to $B$ using a special orthogonal matrix of appropriate size. We also write $(A)_l$ for the $l \times l $ top left submatrix of $A$.
\begin{proof} Applying the Propositions \ref{openeven} and \ref{openodd} we deduce that, in the $G=SO(2n+1)$ case, 
 $\Lambda \inv (\eo{E})$ consists of matrices $M$,\footnotesize
\begin{displaymath}\left(
\begin{array}{c|c}
\begin{array}{c|c}
 \begin{array}{c|c} \begin{array}{ccccccc}
L(\lambda_{1})&&&&&&\\
&\ddots&&&&&\\
&&L(\lambda_{j-1})&&&&\\
&&&L(x^{(2k)}_{j})&&&\\
&&&&L(\lambda_{j+1})&&\\
&&&&&\ddots&\\
&&&&&&L(\lambda_k)
\end{array} &P\\ \hline -P^T &0 \end{array} &Y\\ \hline -Y^T &0
\end{array}
 &0\\
\hline
0&\begin{array}{cccc}
L(\lambda_{k+2})&&&\\
&\ddots&&\\
&&L(\lambda_{n})&\\
&&&0
\end{array} \end{array} \right)
\end{displaymath} 
\normalsize
where $$x^{(2k)}_{j}\in ( \lambda_{j+1},\lambda_j ]\textrm{ if }j < k,$$ $$ x^{(2k)}_{j}\in (-\lambda_{k},\lambda_k ]\textrm{ if }j=k,$$ and the real vectors $P$ and $Y$ are such that $$(M)_{2k+1} \sim (\lambda)_{2k+1}\textrm{ and }(M)_{2k+2} \sim (\lambda)_{2k+2}.$$
Top right $(2k+2)\times (2n+1-2k-2)$ minor, and bottom left $(2n+1-2k-2) \times (2k+2)$ minor of $M$ must be zero in order to have $(M)_{l}\sim (\lambda)_{l}$ for all $l >2k+2$.

The Proposition \ref{openodd} implies that the $l$-th coordinate of $P$,$p_l$, must be zero for all $l \neq 2j-1, 2j$. The traces of $(\,(M)_{2k+1}\,)^2$ and $(\,(\lambda)_{2k+1}\,)^2$
need to be equal, therefore $p_{2j-1}^2+p_{2j}^2=\lambda_j^2-(x^{(2k)}_{j})^2$. 
This gives a circle of solutions for every choice of $x^{(2k)}_{j}$ in $(\lambda_{j+1},\lambda_{j})$ and the unique solution of $p_{2j-1}^2=p_{2j}^2=0$ if $x^{(2k)}_{j}=\lambda_j$.

Now we analyze conditions on vector $Y$. We are to have that $(M)_{2k+2}\sim (\lambda)_{2k+2}.$ If $B\in SO(2k+1)$ is such that $B\,(M)_{2k+1}\,B\inv=(\lambda)_{2k+1}$, then
\begin{displaymath}
 \left(\begin{array}{c|c}B&\\ \hline &1\end{array}\right)\,
\left(\begin{array}{c|c}(M)_{2k+1}&Y\\ \hline-Y^T &0\end{array}\right)\,
\left(\begin{array}{c|c}B\inv&\\ \hline &1\end{array}\right)\,=
\left(\begin{array}{c|c}(\lambda)_{2k+1}&BY\\ \hline -Y^TB\inv&0\end{array}\right).
\end{displaymath}
Therefore \begin{displaymath}
       \left(\begin{array}{c|c}(\lambda)_{2k+1}&BY\\ \hline -Y^TB\inv&0\end{array}\right) \sim \left(\begin{array}{c|c}(M)_{2k+1}&Y\\ \hline-Y^T &0\end{array}\right)=(M)_{2k+2} \sim (\lambda)_{2k+2}.    
          \end{displaymath}
Denote the coordinates of the vector $BY$ by $(v_1, \ldots, v_{2k+1})$. According to the Lemma \ref{openeven} the condition that 
\begin{displaymath}
 \left(\begin{array}{c|c}(\lambda)_{2k+1}&BY\\ \hline -Y^TB\inv&0\end{array}\right) \sim (\lambda)_{2k+2}
\end{displaymath}
implies that
$$v_1=\ldots=v_{2k}=0,\,\,\,v_{2k+1}^2=\lambda_{k+1}^2.$$
Therefore
\begin{equation}\label{ysolution}
 BY=\left(\begin{array}{c}0\\ \vdots \\0\\ \lambda_{k+1}\end{array}\right)\,\textrm{ or }BY=\left(\begin{array}{c}0\\ \vdots \\0\\ -\lambda_{k+1}\end{array}\right).
\end{equation}
For any choice of vector $P$, matrix $B$ is uniquely defined only up to multiplication by an element of maximal torus of $SO(2k+1)$. Every element $t$ of this torus has $k$ 
$(2 \times 2)$ blocks of rotations on the diagonal, the last diagonal entry equal to $1$, and all other entries zero. Therefore we have exactly two 
solutions to equation (\ref{ysolution}): $$Y=  B\inv (0,\ldots,0,\pm \lambda_{k+1})^T.$$
For both of these solutions $(M)_{2k+2}$
has the desired characteristic polynomial 
$q_{2k+2}(t)=\prod_{l=1}^{k+1}(t^2+ \lambda_l^2)$. However only one of them will give us matrix in the $SO(2k+2)$-orbit of $(\lambda)_{2k+2}$ as explained
in the proof of Proposition \ref{ineqeven}.
This means that the vector $Y$ is uniquely defined for every choice of vector $P$. Therefore the preimage of $E$ is a disk.

If $G=SO(2n)$ the proof is nearly identical. Just delete last row and column in the presentation of $M$. Conditions on $X$ and $Y$ stay the same.

\end{proof}
Now we analyze the weights of the action.
\begin{lemma}\label{weighte}
 The weight of the Gelfand-Tsetlin torus on $T_{\lambda}\Lambda \inv (\eo{E})$ is $-w^{(2k)}_{j}$, where
$$w^{(2k)}_{j}:=\sum_{l=2j}^{2k}x^{(l)}_{j}$$
and $E$ is an edge of $\cc{P}$ equal to the vector
\begin{align*}
 \,\langle \, (e_j-e_{j+1})^{\vee}, \lambda \,\rangle\,\,w^{(2k)}_{j}&=
(\lambda_j-\lambda_{j+1})\,w^{(2k)}_{j}&\textrm{ if }j<k,\\
\langle e_k^{\vee},\lambda \rangle\,w^{(2k)}_{k}&=
2\,\lambda_k\,w^{(2k)}_{k}&\textrm{ if }j=k \end{align*}
\end{lemma}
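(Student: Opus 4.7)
The plan is to reduce Lemma \ref{weighte} to computing a single circle's action in the Gelfand--Tsetlin torus after using Lemma \ref{spherea} to describe the disk. By Lemma \ref{spherea}, near $\lambda$ the preimage $\Lambda^{-1}(\eo{E})$ is parameterized by the pair $(p_{2j-1}, p_{2j})$, the only possibly nonzero entries of the vector $P$ in the matrix $M$ described there, with $Y$ and $x^{(2k)}_j$ determined by the constraint $p_{2j-1}^2 + p_{2j}^2 = \lambda_j^2 - (x^{(2k)}_j)^2$. In particular, $T_\lambda \Lambda^{-1}(\eo{E})$ is spanned by $\partial_{p_{2j-1}}, \partial_{p_{2j}}$; set $\zeta := p_{2j-1} + i\, p_{2j}$.

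Next, I would show the weight is proportional to $w^{(2k)}_j$. By the description (\ref{edgeeven}) of $E$, on the disk the only Gelfand--Tsetlin coordinates that vary are $x^{(l)}_j$ for $2j \leq l \leq 2k$, and they all equal $x^{(2k)}_j$; hence $\Lambda|_{\text{disk}}$ has image parallel to $w^{(2k)}_j = \sum_{l=2j}^{2k} e^{(l)}_j$. For any generator $\xi$ of $T_{GT}$ with $\langle w^{(2k)}_j, \xi\rangle = 0$, the function $\langle \Phi, \xi\rangle$ is constant on the disk, so $d\langle \Phi,\xi\rangle|_{\text{disk}} = 0$ and hence $\iota(\xi_M)\omega|_{\text{disk}} = 0$. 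Since the disk is an invariant symplectic submanifold (2-dimensional with a nontrivial Hamiltonian $T_{GT}$-action and fixed point $\lambda$), $\omega$ restricts nondegenerately, forcing $\xi_M \equiv 0$ on the disk. Therefore the $T_{GT}$-action on the disk factors through a single circle and the weight at $\lambda$ is a scalar multiple of $w^{(2k)}_j$.

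To pin down the scalar I would compute the $j$-th circle of $T_{SO(2k)}$ directly. On the disk, $(M)_{2k}$ is block-diagonal in $L$-blocks with entries $\lambda_1 \geq \ldots \geq \lambda_{j-1} \geq x^{(2k)}_j \geq \lambda_{j+1} \geq \ldots \geq \lambda_k$ (or $\geq |x^{(2k)}_k|$ when $j=k$) by the polytope inequalities (\ref{polytopeineqodd}), so $(M)_{2k}$ already lies in the $SO(2k)$ Weyl chamber and one may take $B(M) = I$. The $*$-action of $t = \mathrm{diag}(I,\ldots,I,R(\theta),I,\ldots,I)$ with $R(\theta)$ in the $j$-th block then reduces to conjugation by $\mathrm{diag}(t,I)$, which sends $P \mapsto tP$ and thus $(p_{2j-1},p_{2j}) \mapsto R(\theta)(p_{2j-1},p_{2j})^T$, i.e.\ $\zeta \mapsto e^{i\theta}\zeta$. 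Matching with the convention of Proposition \ref{embed} identifies the weight as $-w^{(2k)}_j$. The statement about $E$ as a vector is then immediate: as $x^{(2k)}_j$ moves across $[\lambda_{j+1},\lambda_j]$ (for $j<k$) or $[-\lambda_k,\lambda_k]$ (for $j=k$), all $2k-2j+1$ coordinates $x^{(l)}_j$ move together, so the displacement along $E$ is $(\lambda_j-\lambda_{j+1})\, w^{(2k)}_j$ or $2\lambda_k\, w^{(2k)}_k$, matching the coroot pairings computed in Subsection \ref{rootsystem}.

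The main obstacle I anticipate is ruling out contributions from circles in $T_{SO(l)}$ with $l > 2k$: a direct computation is awkward because the conjugator $B(M)$ is no longer trivial off $\lambda$ (the matrix $(M)_l$ fails to lie in the Weyl chamber once $P \neq 0$), but the abstract momentum-map argument in the second paragraph handles all such circles uniformly via the pairing criterion $\langle w^{(2k)}_j, \xi\rangle = 0$.
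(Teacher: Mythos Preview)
Your argument is correct and reaches the same conclusion, but it follows a genuinely different route from the paper's proof.

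The paper proceeds by direct computation, level by level. For $l\geq 2k+2$ it uses the block-diagonal form of $M$ to see that conjugation by $B^{-1}RB$ does nothing. For $l=2k+1$ it invokes the specific fact, established in Lemma~\ref{spherea}, that $BY=(0,\ldots,0,\pm\lambda_{k+1})^T$; since the last diagonal entry of any $R\in T_{SO(2k+1)}$ is $1$, one gets $RBY=BY$ and hence $B^{-1}RBY=Y$, so this level acts trivially as well. Finally, for each $2j\leq l\leq 2k$ the paper observes that $(M)_l$ already sits in the Weyl chamber, so the $*$-action is plain conjugation, and reads off that the $j$-th circle of $T_{SO(l)}$ rotates $(p_{2j-1},p_{2j})$ with speed~$1$. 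Summing these contributions yields $-w^{(2k)}_j$.

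You replace almost all of this with a single momentum-map argument: since $\Lambda$ restricted to the disk moves only in the direction $w^{(2k)}_j$, any $\xi$ with $\langle w^{(2k)}_j,\xi\rangle=0$ has constant Hamiltonian there, and nondegeneracy of $\omega$ on the (invariant, symplectic) disk forces $\xi_M\equiv 0$. This disposes of the levels $l\geq 2k+1$ and the off-$j$ circles for $l\leq 2k$ uniformly, without ever examining the conjugator $B$ or the special form of $BY$. You then pin down the proportionality constant by the same single computation the paper does for $l=2k$. The trade-off: your approach is cleaner and sidesteps the delicate $l=2k+1$ step entirely, but it relies on knowing that the disk is a symplectic submanifold (which follows from the toric structure on $U$, as the preimage of an edge is a fixed-point component of a codimension-one subtorus), whereas the paper's hands-on computation needs no such structural input.
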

\begin{remark}
 Lemmas \ref{weighte} and \ref{weightf} find all the isotropy weights of the Gelfand-Tsetlin torus action at $\lambda$. Consider the lattice generated by the weights. Notice that for the special orthogonal group the weights are primitive vectors in the lattice they generate. This fact has an important consequence. To apply Proposition \ref{embed} we need to find $c$ such that the set $E^{(2k)}_{j}$ is equal to the $(-c)$ times the isotropy weight along $E$. 
In our case, the $c$ we need is the same as the lattice length of $E$ with respect to the weight lattice, exactly because all the weights are primitive. We want to point out that this is not necessarily true in general.
\end{remark}

\begin{proof} To make notation easier we concentrate on the case $G=SO(2n+1)$. The proof for $G=SO(2n)$ is nearly identical.

An element $R \in T_{SO(l)}$ of maximal torus of $SO(l)$, with $l \geq 2k+2$, acts on a matrix $M \in \Lambda \inv (\eo{E})$ by conjugation with
\begin{displaymath}
 \left(\begin{array}{c|c}B\inv R B &\\ \hline &I_{2n+1-l}\end{array}\right)\end{displaymath}
where $B \in SO(l)$ is such that $B(M)_{l}B\inv =(\lambda)_l \in (\lie{t}_{SO(l)})^*_+$. This action is trivial. 
To see this denote by $S$ the bottom left $(n+1-l) \times (n+1-l)$ submatrix of $M$. Then 
\begin{displaymath}
 \left(\begin{array}{c|c}B\inv R B &\\ \hline & I \end{array} \right)\,
\left(\begin{array}{c|c}(M)_{l}&0\\ \hline 0 & S\end{array}\right)\,
\left(\begin{array}{c|c}B\inv R\inv B&\\ \hline &I\end{array}\right)\,=\,
\left(\begin{array}{c|c}(M)_{l}&0\\ \hline 0 & S\end{array}\right).
\end{displaymath}
Therefore the functions $x^{(l)}_*$ with $l \geq 2k+2$ are constant on $\Lambda \inv (\eo{E})$.

Now consider the action of maximal torus of $SO(2k+1)$, $T_{SO(2k+1)}$. Let $B \in SO(2k+1)$ be such that 
$B(M)_{2k+1}B\inv =(\lambda)_{2k+1}\in (\lie{t}_{SO(2k+1)})^*_+$. Denote by $S$ the bottom right $(2n-2k) \times (2n-2k)$ submatrix of $M$. An element $R$ of $T_{SO(2k+1)}$ has the form \small
\begin{displaymath}
 R=\left( \begin{array}{cccc}
R(\alpha_1)&&&\\ & \ddots &&\\ && R(\alpha_k)& \\ &&& 1 \end{array}\right)
\end{displaymath} \normalsize
and it acts on $M$ by
\small
\begin{displaymath}
 \left(\begin{array}{c|c}B\inv R B &\\ \hline & I_{2n-2k} \end{array} \right)\,
\left(\begin{array}{c|c}(M)_{2k+1}&\left( \begin{array}{c|c}Y&0
\end{array}\right)\\ \hline   & \\ \left( \begin{array}{c}-Y^T\\ \hline 0
\end{array}\right)& S\end{array}\right)\,
\left(\begin{array}{c|c}B\inv R\inv B&\\ \hline &I_{2n-2k}\end{array}\right)\,\end{displaymath}
\begin{displaymath}=
\left(\begin{array}{c|c}(M)_{2k+1}&\left( \begin{array}{c|c}B\inv R\inv B\,Y&0
\end{array}\right)\\ \hline &\\ \left( \begin{array}{c}-Y^T\,(BRB\inv)^T \\ \hline 0
\end{array}\right)& S\end{array}\right).
\end{displaymath}
\normalsize
Recall that 
\begin{displaymath}
 BY=\left( \begin{array}{c}0\\ \vdots \\ 0 \\ \pm \lambda_{k+1} \end{array}\right), \textrm{ so } 
RBY=\left( \begin{array}{c}0\\ \vdots \\ 0 \\ \pm \lambda_{k+1} \end{array}\right)=B\,Y,\,\textrm{ and } B\inv RBY=Y.
\end{displaymath}
Therefore this action is also trivial.

 Now let $T_{SO(l)}$ be the chosen maximal torus of $SO(l)$ for $l \leq 2k$. An element of $T_{SO(l)}$ is of the form  $R=\diag(R(\alpha_1),\ldots, R(\alpha_{\frac{l-1}{2}}),1)$ or $R=\diag(R(\alpha_1),\ldots, R(\alpha_{\frac l 2}))$. Note that for $l \leq 2k$ the submatrix $(M)_{l}$ is in the positive Weyl chamber $(\lie{t}_{SO(l)})^*_+$. Therefore an element  $R \in T_{SO(l)}$ acts on $M$ simply by conjugation. Denote by $W$ the top right $l \times (2n+1-l)$ submatrix of $M$, and by $S$ the bottom right $(2n+1-l) \times (2n+1-l)$ submatrix of $M$. With this notation, the action of $R$ is the following.
\begin{displaymath}
 \left(\begin{array}{c|c} R &\\ \hline & I \end{array} \right)\,
\left(\begin{array}{c|c}(M)_{l}& W\\ \hline  & \\ -W^T & S\end{array}\right)\,
\left(\begin{array}{c|c}R\inv &\\ \hline &I\end{array}\right)\,=\,
\left(\begin{array}{c|c}(M)_{l}&RW\\ \hline & \\  -(RW)^T & S\end{array}\right).
\end{displaymath}
Only two of the columns of $W$ maybe be non-zero:
column $(2k+2)$-nd contains the first $l$ coordinates of the vector $Y$, and column $(2k+1)$-st contains the first $l$ coordinates of the vector $P$. 
We already showed that the only possibly non-zero entries of the vector $P$ are $p_{2j-1}$ and $p_{2j}$. 
Therefore the submatrix $W$ has possibly non-zero entries in the $(2k+1)$-st column if and only if $l \geq 2j$. 
In this case, notice that only the $j$-th circle of $T_{SO(l)}$ acts on the $(2k+1)$-st column, with speed $1$.
\begin{displaymath}
 R \left( \begin{array}{c} 0\\ \vdots \\ 0 \\ p_{2j-1}\\p_{2j}\\0\\ \vdots \\ 0 \end{array} \right)=
 \left( \begin{array}{c} 0\\ \vdots \\ 0 \\ R(\alpha_j) \left( \begin{array}{c} p_{2j-1}\\p_{2j}\end{array}\right) \\0\\ \vdots \\ 0 \end{array} \right).
\end{displaymath}
Recall that the vector $Y$ is uniquely determined by the vector $P$. Therefore, when we analyze the action of $T$ on $T_{\lambda}\Lambda \inv (\eo{E})$, independent variables are only in $W,S, P$. This means that the weight of the Gelfand-Tsetlin torus on $T_{\lambda}\Lambda \inv (\eo{E})$ is 
$$-w^{(2k)}_{j}:=-\sum_{l=2j}^{2k}\,x^{(l)}_{j}.$$
The conditions (\ref{edgeeven}) imply that the set $E$ is an edge of the polytope $\cc{P}$ given by the vector 
$$(\lambda_{j}-\lambda_{j+1})\,w^{(2k)}_{j}=
 \,\langle \, (e_j-e_{j+1})^{\vee},\lambda \,\rangle \, \,w^{(2k)}_{j},$$
if $j<k$, and by the vector 
$$\,\langle \, e_k^{\vee},\lambda \,\rangle\,\,w^{(2k)}_{k}=
2\,\lambda_k\,w^{(2k)}_{k}$$ if $j=k.$

Recall that for $G=SO(2n+1)$ we were taking $k$ from the set $\{1, \ldots, n\}$, and for $G=SO(2n)$ we had $k\in\{1, \ldots, n-1\}$.
Therefore the collection of lattice lengths of edges $E^{(2k)}_{j}$ is $$\{\lambda_1-\lambda_2, \ldots, \lambda_{n-1}-\lambda_n, 2\lambda_1, \ldots, 2\lambda_n\}\textrm{ for }G=SO(2n+1)$$ $$\{\lambda_1-\lambda_2, \ldots, \lambda_{n-2}-\lambda_{n-1}, 2\lambda_1, \ldots, 2\lambda_{n-1}\}\textrm{ for }G=SO(2n).$$
 \end{proof}


Now we analyze the other edges starting from $\Lambda(\lambda)$.		
We still think of $\bb{R}^{N}$ as having coordinates $\{x^{(k)}_{j}\}$, for appropriate $k,j$. 
Pick any $k < n$ and $j \leq k$. Consider the set $F:=F^{(2k+1)}_{j}$, that is the image of points where all the Gelfand-Tsetlin functions are equal to their upper bound, apart from the 
function $\lambda^{(2k+1)}_{j}$. That is, $F$ is the set of points satisfying
 \begin{align}\label{edgeodd}
x^{(m)}_{l}&=\lambda_l \textrm{ for all }m\textrm{ and all }l\neq j,\notag \\
x^{(m)}_{j}&=\lambda_j\textrm{ for all }m \geq 2 k+2,\\ 
x^{(m)}_{j}&=x^{(2k+1)}_{j}\textrm{ for all }2j\leq m \leq 2k+1,\notag
\end{align}
where $x^{(2k+1)}_{j}\in [\lambda_{j+1}, \lambda_j]$, unless $G=SO(2n)$ and $k=n-1, j=n-1$ when $x^{(2n-1)}_{n-1}\in [|\lambda_{n}|, \lambda_{n-1}].$
 Here is graphical presentation
\scriptsize
\begin{equation}
  \begin{array}{ccccccccccccc}
 \lambda_{j-1}&&&\lambda_{j}&&&\lambda_{j+1}&&&&&&\\
&\begin{rotate}{135} = \end{rotate} &&&\begin{rotate}{135} = \end{rotate}&&&\begin{rotate}{135} = \end{rotate}&&&&\\
&&x^{(2k+2)}_{j-1} &&&x^{(2k+2)}_{j}&&&x^{(2k+2)}_{j+1}&&&\\
&&& \begin{rotate}{135} = \end{rotate} &&&\begin{rotate}{-45} $>$ \end{rotate}&&&\begin{rotate}{135} = \end{rotate}&&\\
&&&& x^{(2k+1)}_{j-1}&&&x^{(2k+1)}_{j}&&&x^{(2k+1)}_{j+1}&\\
&&&&&\begin{rotate}{135} = \end{rotate} &&&\begin{rotate}{135} = \end{rotate}&&&\begin{rotate}{135} = \end{rotate}\\
&&&&&&x^{(2k)}_{j-1}&&&x^{(2k)}_{j}&&&x^{(2k)}_{j+1}\\
 \end{array}
\end{equation}
\normalsize
Again, similarly to the unitary case (\cite[Lemma 3.10]{P}), one can show that $F$ is an edge of $\cc{P}$. Let $\eo{F}=\Lambda(\lambda) \cup \textrm{int }F$ denote the edge $F$ without the second endpoint.
From the definition of $U$ and the fact that $\Lambda(\lambda)\in U$, it follows that the set $\Lambda \inv (\eo{F})$ is also contained in $U$. Therefore it is equipped with a smooth action of the Gelfand-Tsetlin torus.
\begin{lemma}\label{sphereb}
 $\Lambda \inv (\eo{F} )$ is a disc invariant under the action of the Gelfand-Tsetlin torus.
\end{lemma}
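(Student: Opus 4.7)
The plan is to mirror the argument of Lemma \ref{spherea}. By repeated application of parts (2) of Lemmas \ref{openodd} and \ref{openeven}, every matrix $M\in\Lambda\inv(\eo{F})$ is forced into an explicit block form depending on only two free parameters; this identifies the preimage with an open $2$-disc, and invariance under the Gelfand-Tsetlin torus is automatic from $T_{GT}$-invariance of $\Lambda$.

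First I would analyze the block structure of $M\in\Lambda\inv(\eo{F})$. The relations (\ref{edgeodd}) force every Gelfand-Tsetlin value at levels $l\leq 2k+1$ to equal its upper bound from the level above, except for the single free parameter $x^{(2k+1)}_{j}$ itself. At each transition between levels $l$ and $l+1$ with $l<2k+1$ (and also for $l\geq 2k+2$, where all values are again at their upper bounds), parts (2) of Lemmas \ref{openodd} and \ref{openeven} force the new row/column vector to vanish. In particular, $(M)_{2k+1}$ is exactly block-diagonal with blocks $L(\lambda_1),\ldots,L(\lambda_{j-1}),L(x^{(2k+1)}_{j}),L(\lambda_{j+1}),\ldots,L(\lambda_k)$ (plus a trailing zero), and every $(M)_l$ for $l\geq 2k+2$ is obtained from $(M)_{2k+2}$ by trivial block-diagonal additions of $L(\lambda_i)$'s. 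The only nontrivial transition is $(M)_{2k+1}\to (M)_{2k+2}$, encoded by a new column vector $Y=[y_1,\ldots,y_{2k+1}]^T$.

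Second I would determine the constraints on $Y$. Since $x^{(2k+1)}_{i}=\lambda_i$ coincides with the level-$(2k+2)$ value at index $i$ for every $i\neq j$ with $i\leq k$, Lemma \ref{openeven}(2) forces $y_{2i-1}=y_{2i}=0$ for all such $i$; hence only $y_{2j-1},y_{2j}$, and $y_{2k+1}$ may be nonzero, and all nontrivial structure of $(M)_{2k+2}$ is carried by the $4\times 4$ sub-block at positions $\{2j-1,2j,2k+1,2k+2\}$. Requiring $(M)_{2k+2}\sim(\lambda)_{2k+2}$ in $SO(2k+2)$, and matching the Frobenius norm $\sum_{i,j}M_{ij}^{2}$ together with the Pfaffian of this sub-block, yields
\[ (x^{(2k+1)}_{j})^{2}+y_{2j-1}^{2}+y_{2j}^{2}+y_{2k+1}^{2}=\lambda_j^{2}+\lambda_{k+1}^{2},\qquad x^{(2k+1)}_{j}\,y_{2k+1}=\pm\lambda_j\lambda_{k+1}. \]
The correct $SO(2k+2)$-orbit (rather than merely the characteristic polynomial) pins the sign, exactly as in Lemma \ref{spherea}. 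Substituting back, the pair $(y_{2j-1},y_{2j})$ traces a circle of squared radius $-(t-\lambda_j^{2})(t-\lambda_{k+1}^{2})/t$ where $t=(x^{(2k+1)}_{j})^{2}$.

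Third I would read off the topology and invariance. On $\eo{F}$ the parameter $x^{(2k+1)}_{j}$ ranges over $(\lambda_{j+1},\lambda_j]$ (with the obvious analogous interval for $j=k$ or in the $SO(2n)$ case), and the squared radius above vanishes precisely at the vertex $x^{(2k+1)}_{j}=\lambda_j$ and is strictly positive elsewhere. The set $\Lambda\inv(\eo{F})$ is therefore the quotient of the cylinder $(\lambda_{j+1},\lambda_j]\times S^1$ obtained by collapsing $\{\lambda_j\}\times S^1$ to the single point $\Lambda(\lambda)$, i.e.\ an open $2$-disc. Invariance under $T_{GT}$ is immediate because $\Lambda$ is its moment map and $\eo{F}$ is a subset of its image. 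I expect the main obstacle to be the careful justification of the block-diagonal form at every level (checking that no residual $SO(l)$ freedom survives beyond the claimed two parameters) together with the Pfaffian sign choice for $y_{2k+1}$; once these are in place, the remaining disc identification is an orbit-matching computation essentially identical to the one in Lemma \ref{spherea}.
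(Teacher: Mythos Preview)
Your proof is correct and follows essentially the same route as the paper's. The only difference is cosmetic: where the paper simply invokes Lemma~\ref{openeven} to conclude that $y_{2k+1}$ and $y_{2j-1}^{2}+y_{2j}^{2}$ are uniquely determined, you extract these constraints explicitly from the Pfaffian and the trace of the square of the relevant $4\times4$ sub-block, obtaining in addition the closed formula $-(t-\lambda_j^{2})(t-\lambda_{k+1}^{2})/t$ for the squared radius of the circle.
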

\begin{proof} In this proof we again concentrate on the case $G=SO(2n+1)$ as the procedure for $G=SO(2n)$ is analogous.

 $\Lambda \inv (\eo{F})$ consists of matrices $M$ of the form\scriptsize
\begin{displaymath}\left(
\begin{array}{c|c}
 \begin{array}{c|c} \begin{array}{cccccccc}
L(\lambda_{1})&&&&&&&0\\
&\ddots&&&&&&\\
&&L(\lambda_{j-1})&&&&&\\
&&&L(x^{(2k+1)}_{j})&&&&\vdots\\
&&&&L(\lambda_{j+1})&&&\\
&&&&&\ddots&&\\
&&&&&&L(\lambda_k)&0\\
0&&&\ldots&&&0&0
\end{array} &Y\\ \hline -Y^T &0 \end{array} 
 &0\\
\hline
0&\begin{array}{cccc}
L(\lambda_{k+2})&&&\\
&\ddots&&\\
&&L(\lambda_{n})&\\
&&&0
\end{array} \end{array} \right)
\end{displaymath} 
\normalsize
where $x^{(2k+1)}_{j}\in (\lambda_{j+1}, \lambda_j]$, unless $G=SO(2n)$ and $k=j=n-1$ when $x^{(2n-1)}_{n-1}\in (|\lambda_{n}|, \lambda_{n-1}],$ and the real vector $Y$ is such that 
$(M)_{2k+2} \sim (\lambda_{2k+2})$.
 Notice that the top right and bottom left minors have to be zero to have that $(M)_{l}\sim (\lambda)_{l}$ for any $l >2k+2$. The Proposition \ref{openeven} implies that 
$$y_l=0\textrm{ for all }l \neq 2j-1,\, 2j,\, 2k+1$$ and that $y_{2k+1}$ and $y_{2j-1}^2+y_{2j}^2$ are uniquely defined.
If $x^{(2k+1)}_{j}=\lambda_{j}$, then $y_{2j-1}=y_{2j}=0$, and $y_{2k+1}=-\lambda_{j}.$ 
For each $x^{(2k+1)}_{j}\in (\lambda_{j+1},\lambda_{j})$ we have a circle worth of choices for $y_{2j-1},y_{2j}=0$, and unique choice for $y_{2k+1}$. 
Therefore $\Lambda \inv (\eo{F}) $ is a $2$-dimensional disk.
\end{proof}
Now we analyze the weights of the action.
\begin{lemma}\label{weightf}
 The weight of the Gelfand-Tsetlin torus on $T_{\lambda}\Lambda \inv (\eo{F})$ is $-w^{(2k+1)}_{j}$, where
$$w^{(2k+1)}_{j}:=\sum_{l=2j}^{2k+1}\,x^{(l)}_{j}$$
and $F$ is an edge of $\cc{P}$ equal to the vector
$$  \,\langle\, (e_j-e_{j+1})^{\vee},\lambda \,\rangle\,\,w^{(2k+1)}_{j}=(\lambda_j-\lambda_{j+1})\,w^{(2k+1)}_{j},$$
unless $G=SO(2n)$ and $k=n-1, j=n-1$ when $F$ is a subset of an edge of $\cc{P}$ equal to the vector $(\lambda_{n-1}-|\lambda_{n}|)\,w^{(2n-1)}_{n-1}.$
\end{lemma}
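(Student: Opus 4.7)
The plan is to mirror the proof of Lemma \ref{weighte}, doing a case-by-case analysis of how each subtorus $T_{SO(l)}$ of the Gelfand-Tsetlin torus acts on the two-dimensional disk $\Lambda\inv(\eo{F})$ described in Lemma \ref{sphereb}. As there, I concentrate on $G=SO(2n+1)$; the $SO(2n)$ case is identical except that at the exceptional $k=j=n-1$ the parameter range for $x^{(2n-1)}_{n-1}$ is $[|\lambda_n|,\lambda_{n-1}]$, which only affects the edge length.

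For $l\geq 2k+3$ the block argument used at the opening of the proof of Lemma \ref{weighte} applies verbatim: the conjugation is supported on the top left $l\times l$ corner of $M$, while the remainder of $M$ is block-diagonal with the constant blocks $L(\lambda_{k+2}),\ldots$, so $T_{SO(l)}$ acts trivially. The new ingredient is that $T_{SO(2k+2)}$ also acts trivially. Indeed, if $B\in SO(2k+2)$ satisfies $B(M)_{2k+2}B\inv=(\lambda)_{2k+2}$, then for any $R\in T_{SO(2k+2)}$,
\[
  (B\inv R B)(M)_{2k+2}(B\inv R B)\inv
  =B\inv R\,(\lambda)_{2k+2}\,R\inv B
  =B\inv (\lambda)_{2k+2}B=(M)_{2k+2},
\]
because $R=\diag(R(\alpha_1),\ldots,R(\alpha_{k+1}))$ commutes with the block-diagonal matrix $(\lambda)_{2k+2}=\diag(L(\lambda_1),\ldots,L(\lambda_{k+1}))$. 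Consequently the upper-left $(2k+2)\times(2k+2)$ block of $M$, and in particular the column $Y$, is pointwise fixed.

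For $l$ in the range $2j\leq l\leq 2k+1$, the description in Lemma \ref{sphereb} shows that $(M)_l$ is block diagonal with entries $\lambda_1,\ldots,\lambda_{j-1},x^{(2k+1)}_j,\lambda_{j+1},\ldots$ arranged in decreasing order, hence already in $(\lie{t}_{SO(l)})^*_+$; the matrix $B$ can be taken to be the identity and $R\in T_{SO(l)}$ acts by ordinary conjugation. Since $(M)_l$ commutes with $R$, only the off-diagonal part of $M$ is touched. By Lemma \ref{sphereb} the only non-zero coordinates of $Y$ are $y_{2j-1},y_{2j},y_{2k+1}$; of these only $y_{2j-1}$ and $y_{2j}$ sit in the first $l$ rows, and $y_{2k+1}$ (relevant when $l=2k+1$) is fixed by the trailing $1$ of $R\in T_{SO(2k+1)}$. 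Hence only the $j$-th circle in $T_{SO(l)}$ acts non-trivially, rotating the plane $(y_{2j-1},y_{2j})$ with unit speed. For $l<2j$ the torus $T_{SO(l)}$ touches neither $y_{2j-1}$ nor $y_{2j}$, and $(M)_l$ is a constant block diagonal matrix built from $\lambda_i$ with $i<j$, so the action is also trivial.

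Summing these contributions, and following the sign computation used in Lemma \ref{weighte}, the isotropy weight of the Gelfand-Tsetlin torus on $T_\lambda\Lambda\inv(\eo{F})$ equals $-\sum_{l=2j}^{2k+1}x^{(l)}_j=-w^{(2k+1)}_j$. The defining equations \eqref{edgeodd} force $x^{(l)}_j=x^{(2k+1)}_j$ for all $2j\leq l\leq 2k+1$ while $x^{(2k+1)}_j$ ranges over an interval of length $\lambda_j-\lambda_{j+1}$ (respectively $\lambda_{n-1}-|\lambda_n|$ in the exceptional $SO(2n)$ case), so $F$ is the segment in the direction $w^{(2k+1)}_j$ of that length, which via the coroot pairing $\langle(e_j-e_{j+1})^\vee,\lambda\rangle=\lambda_j-\lambda_{j+1}$ recorded in Subsection \ref{rootsystem} yields the stated formula. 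The only substantively new step relative to Lemma \ref{weighte} is the triviality of the $T_{SO(2k+2)}$ action; once that observation is in hand, the remainder is bookkeeping entirely parallel to the even-edge case.
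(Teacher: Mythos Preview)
Your proof is correct and follows essentially the same approach as the paper's: split according to the index $l$ of the subtorus $T_{SO(l)}$, show triviality for $l\geq 2k+2$ via the block structure and commutation of $R$ with $(\lambda)_l$, then for $l\leq 2k+1$ observe that $(M)_l$ is already in the positive Weyl chamber so the action is plain conjugation, with only the $j$-th circle rotating $(y_{2j-1},y_{2j})$. The paper treats all $l\geq 2k+2$ uniformly rather than isolating $l=2k+2$ as a ``new ingredient''---the commutation argument you give for $l=2k+2$ is exactly the one the paper uses for every $l\geq 2k+2$---but this is a matter of presentation, not substance.
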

\begin{proof} For simplicity of notation assume that $G=SO(2n+1)$. To obtain the proof in the case $G=SO(2n)$ one only needs to delete the last row and column of $M$.

First consider the action of $T_{SO(l)}$ with $l \geq 2k+2$. An element $R \in T_{SO(l)}$ of the maximal torus of $SO(l)$ acts on matrix $M \in \Lambda \inv (\eo{F})$ by conjugation with
\begin{displaymath}
 \left(\begin{array}{c|c}B\inv R B &\\ \hline &I_{2n+1-l}\end{array}\right)\end{displaymath}
where $B \in SO(l)$ is such that $B(M)_{l}B\inv =(\lambda)_l \in (\lie{t}_{SO(l)})^*_+$. 
Denote by $S$ the bottom left $(n+1-l) \times (n+1-l)$ submatrix of $M$. Have
\begin{displaymath}
 \left(\begin{array}{c|c}B\inv R B &\\ \hline & I \end{array} \right)\,
\left(\begin{array}{c|c}(M)_{l}&0\\ \hline 0 & S\end{array}\right)\,
\left(\begin{array}{c|c}B\inv R\inv B&\\ \hline &I\end{array}\right)\,=\,
\left(\begin{array}{c|c}(M)_{l}&0\\ \hline 0 & S\end{array}\right).
\end{displaymath}
Therefore the functions $x^{(l)}_*$ for $l \geq 2k+2$ are constant on $\Lambda \inv (\eo{F})$ and the action is trivial. 

Now consider the action of $T_{SO(l)}$, for $l \leq 2k+1$. 
An element $R$ of $T_{SO(l)}$ has the form 
\begin{displaymath}
 R=\left( \begin{array}{cccc}
R(\alpha_1)&&&\\ & \ddots &&\\ && R(\alpha_{\lfloor \frac l 2 \rfloor})& \\ &&& 1 \end{array}\right)\textrm{  or  }R=\left( \begin{array}{ccc}
R(\alpha_1)&&\\ & \ddots &\\ && R(\alpha_{ \frac l 2 }) \end{array}\right).\end{displaymath}
Denote by $W$ the top right $l \times (2n+1-l)$ submatrix of $M$, and by $S$ the bottom right $(2n+1-l) \times (2n+1-l)$ submatrix of $M$. 
Notice that $(M)_l \in (\lie{t}_{SO(l)})^*_+$. Therefore the action of $R$ is the following.
\begin{displaymath}
 \left(\begin{array}{c|c} R &\\ \hline & I \end{array} \right)\,
\left(\begin{array}{c|c}(M)_{l}& W\\ \hline -W^T & S\end{array}\right)\,
\left(\begin{array}{c|c}R\inv &\\ \hline &I\end{array}\right)\,=\,
\left(\begin{array}{c|c}(M)_{l}&RW\\ \hline -(RW)^T & S\end{array}\right).
\end{displaymath}
Only one of the columns of $W$ maybe be non-zero:
column $(2k+2)$-nd contains the first $l$ coordinates of the vector $Y$. 
We already showed that the only possibly non-zero entries of the vector $Y$ are $y_{2j-1}$, $y_{2j}$  and $y_{2k+1}$.
Therefore the submatrix $W$ has possibly non-zero entries in the $(2k+1)$-st column if and only if $l \geq 2j-1$. 
The action does not change the $(2k+1, 2k+1)$-th entry of $M$, namely $y_{2k+1}$. This is because this entry is a part of $W$ only in the case $l=2k+1$. In that case, $R$ acts on this entry by multiplication by its $(2k+1, 2k+1)$-th entry, which is equal to $1$. 
There is however nontrivial action on the $(2k+1, 2j-1)$-th and $(2k+1, 2j)$-th entries of $M$ if only $l \geq 2j$.
The $j$-th circle of $T_{SO(l)}$ acts on the $(2k+1)$-st column, rotating them with speed $1$.
\begin{displaymath}
 R \left( \begin{array}{c} 0\\ \vdots \\ 0 \\ y_{2j-1}\\y_{2j}\\0\\ \vdots \\ 0 \end{array} \right)=
 \left( \begin{array}{c} 0\\ \vdots \\ 0 \\ R(\alpha_j) \left( \begin{array}{c} y_{2j-1}\\y_{2j}\end{array}\right) \\0\\ \vdots \\ 0 \end{array} \right).
\end{displaymath}
This means that the weight of the Gelfand-Tsetlin torus on $T_{\lambda}\Lambda \inv (\eo{F})$ is 
$$-w^{(2k+1)}_{j}:=-\sum_{l=2j}^{2k+1}\,x^{(l)}_{j}.$$
The condition (\ref{edgeodd}) implies that $F$ is a subset of an edge of $\cc{P}$ equal to the vector
$$(\lambda_j-\lambda_{j+1})w^{(2k+1)}_{j},$$
unless $G=SO(2n)$ and $k=n-1, j=n-1$ when $F$ is equal to the vector $(\lambda_{n-1}-~|\lambda_n|)w^{(2n-1)}_{n-1}.$ 

Note the collection of lattice lengths of edges $F^{(2k+1)}_{j}$ is 
$$\{\lambda_1-\lambda_2, \ldots, \lambda_{n-1}-\lambda_n\}\textrm{ for }G=SO(2n+1),$$ 
$$\{\lambda_1-\lambda_2, \ldots, \lambda_{n-2}-~\lambda_{n-1},\,  \lambda_{n-1}-~|\lambda_n|\}\textrm{ for }G=SO(2n).$$ 
\end{proof}
We summarize the above section in the following corollary.
\begin{corollary}\label{minedgelengths}
 Every edge of $\cc{P}$ starting from $\Lambda(\lambda)$ has lattice length equal to at least $\min\{\, \left|\left\langle \alpha^{\vee},\lambda \right\rangle \right|\,; \alpha^{\vee} \textrm{ a  coroot}\}.$
\end{corollary}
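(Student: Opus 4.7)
The plan is to read off the corollary directly from Lemmas \ref{weighte} and \ref{weightf}, which between them already enumerate every edge of $\cc{P}$ emanating from $\Lambda(\lambda)$ (via the relaxed-inequality description recalled at the start of Section~\ref{edges}) and compute its lattice length. So no further geometry is needed: the task reduces to a finite case check comparing each edge length to the coroot minimum $m:=\min\{|\langle\alpha^{\vee},\lambda\rangle|: \alpha^{\vee}\text{ a coroot}\}$.

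First I would list the edges. By Lemma \ref{weighte} the edge $E^{(2k)}_j$ has lattice length $\lambda_j-\lambda_{j+1}$ when $j<k$, and length $2\lambda_k$ when $j=k$; by Lemma \ref{weightf} the edge $F^{(2k+1)}_j$ has lattice length $\lambda_j-\lambda_{j+1}$, with the single exception (in the case $G=SO(2n)$) that $F^{(2n-1)}_{n-1}$ has length $\lambda_{n-1}-|\lambda_n|$. Recall from Subsection~\ref{rootsystem} that
\[
m=\begin{cases}\min\{\lambda_1-\lambda_2,\ldots,\lambda_{n-1}-\lambda_n,\,2\lambda_n\} & G=SO(2n+1),\\[2pt] \min\{\lambda_1-\lambda_2,\ldots,\lambda_{n-1}-\lambda_n,\,\lambda_{n-1}+\lambda_n\} & G=SO(2n).\end{cases}
\]

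Next, the comparisons. The lengths of type $\lambda_j-\lambda_{j+1}$ literally appear among the terms of $m$, so are $\geq m$ trivially. For the lengths $2\lambda_k$ in the $SO(2n+1)$ case, positivity of $\lambda$ in the interior of the Weyl chamber gives $\lambda_k\geq\lambda_n\geq 0$, hence $2\lambda_k\geq 2\lambda_n\geq m$. For the $SO(2n)$ case the remaining two families are handled as follows: the Weyl chamber condition $\lambda_{n-1}\geq|\lambda_n|$ forces $2\lambda_k\geq 2\lambda_{n-1}\geq \lambda_{n-1}+|\lambda_n|\geq \lambda_{n-1}\pm\lambda_n$, so $2\lambda_k\geq m$; and the exceptional length $\lambda_{n-1}-|\lambda_n|$ equals $\min\{\lambda_{n-1}-\lambda_n,\,\lambda_{n-1}+\lambda_n\}$ according as $\lambda_n\geq 0$ or $\lambda_n<0$, and is therefore again $\geq m$.

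There is really no hard part: the substantive work was done in Section~\ref{edges} when the weights and edge lengths were identified. The only thing to be careful about is the $SO(2n)$ sign convention on $\lambda_n$ (which can be negative), and in particular matching $\lambda_{n-1}-|\lambda_n|$ with the appropriate term $\lambda_{n-1}\pm\lambda_n$ of the coroot minimum; this is the one bookkeeping step that merits explicit verification in both sign cases. With that observation in hand, the corollary follows by the elementary inequalities above.
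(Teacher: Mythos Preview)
Your proposal is correct and follows essentially the same approach as the paper's own proof: both read off the complete list of edge lengths from Lemmas~\ref{weighte} and~\ref{weightf}, then use the Weyl-chamber inequalities on $\lambda$ to compare each length to the coroot minimum. Your treatment is in fact slightly more explicit in handling the sign of $\lambda_n$ in the $SO(2n)$ case (identifying $\lambda_{n-1}-|\lambda_n|$ with $\min\{\lambda_{n-1}-\lambda_n,\lambda_{n-1}+\lambda_n\}$), but the substance is identical.
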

\begin{proof}
Direct application of Lemmas \ref{weighte} and \ref{weightf} would give us lower bounds for lattice lengths equal to 
$$\min\{\lambda_1-\lambda_2, \ldots, \lambda_{n-1}-\lambda_n, 2\,\lambda_1, \ldots ,2\,\lambda_n\}\textrm{ if }G=SO(2n+1),$$
$$\min\{\lambda_1-\lambda_2, \ldots, \lambda_{n-1}-\lambda_n, \lambda_{n-1}-|\lambda_n|,2\,\lambda_1, \ldots ,2\,\lambda_{n-2},2\,\lambda_{n-1}\}\textrm{ if }G=SO(2n).$$
Inequalities coming from the fact that $\lambda$ is in the positive Weyl chamber imply that the minimum over the first set is equal to 
$$\min \{\lambda_1-\lambda_2, \ldots, \lambda_{n-1}-\lambda_n, 2\,\lambda_n\},$$
while the minimum over the second set is equal to
$$\min \{\lambda_1-\lambda_2, \ldots, \lambda_{n-1}-\lambda_n, \lambda_{n-1}+\lambda_n\}.$$
For example, $$2\,\lambda_{n-1}>\lambda_{n-1}+\,|\lambda_{n}|=\lambda_{n-1}\pm \lambda_n,$$ $$\lambda_{n-1}-|\lambda_n|=\min\{\lambda_{n-1}-\lambda_n, \lambda_{n-1}+\lambda_n\}.$$
Analysis of root systems done in Subsection \ref{rootsystem} gives that in both cases the minimum is equal to $\min\{\, \left|\left\langle \alpha^{\vee},\lambda \right\rangle \right|\,; \alpha^{\vee} \textrm{ a  coroot}\}.$
\end{proof}
\section{The proof of the Main Theorem}.\label{proof} \begin{proof}
To prove the Theorem \ref{main}, we will proceed as in the Example \ref{centeredexample}. Recall that $2N$ is the dimension of the orbit $\cc{O}_{\lambda},$ where $N=n^2$ if $G=SO(2n+1)$ and $N=n(n-1)$ if $G=SO(2n)$. The point $\lambda \in \cc{O}_{\lambda}$ is a fixed point for the action of the Gelfand-Tsetlin torus. Moreover, preimage of $\Lambda (\lambda)$ is a single fixed point, $\{\lambda\}$. From the definition of $U$ it follows that $\lambda \in U$ and that
$$\cc{T}:= \bigcup\limits_{\substack{\cc{F} \text{ face of } \cc{P} \\ \Lambda (\lambda) \in \cc{F}}}
   \Lambda^{-1}(\text{rel-int } \cc{F}) \,\subset U.$$ Moreover the action of the Gelfand-Tsetlin torus on $\cc{T}$ is centered around $\Lambda (\lambda)$. Denote the weights of the action $T_{GT} \curvearrowright T_{\lambda}\cc{T}=T_{\lambda}\cc{O}_{\lambda}$ by $-\eta_1, \ldots, -\eta_N.$   Let
$r=\min\{\, \left|\left\langle \alpha^{\vee},\lambda \right\rangle \right|\,; \alpha^{\vee} \textrm{ a  coroot}\}.$
Corollary \ref{minedgelengths} shows that lattice lengths of all edges starting from $\Lambda (\lambda)$ are at least $r$.
Therefore 
$$\Lambda(\lambda) + \pi \sum_{i=1}^N |z_i|^2 \eta_i \in \cc{T}$$ for any $z \in B^{2N}_r,$ 
ball of capacity $r$.
Proposition \ref{embed} gives symplectic embedding of the ball of the capacity $r$. 
Therefore $r$ is the lower bounds for Gromov width. \end{proof}
\section{Orbits that are not regular.}\label{notregular}
In this section we analyze orbits that intersect the positive Weyl chamber at a point on the boundary of the chamber. Therefore they are not regular. In the literature they are often referred to as non-generic orbits.
In the case of the unitary group, the Gelfand-Tsetlin action allows to calculate the lower bound for Gromov width also for some class of such orbits, \cite{P}.
For the $SO(2n+1)$ the Theorem \ref{main} can also be generalized to a class of orbits that are not regular. The same argument applied in the case $G=SO(2n)$ gives only a lower bound that is smaller then the expected one. We still present it here as no lower bounds were previously known.
\begin{theorem}\label{nongeneric}
 Let $\lambda$ be a block diagonal matrix
\begin{displaymath}
\lambda= \begin{cases}
  \diag\,(\,L(\lambda_1), \ldots, L(\lambda_n),1) \in \lie{t}^*_+=(\lie{t}_{SO(2n+1)})^*_+ &\textrm{ if }G=SO(2n+1)\\
  \diag\,(\,L(\lambda_1), \ldots, L(\lambda_n)) \in \lie{t}^*_+=(\lie{t}_{SO(2n)})^*_+ &\textrm{ if }G=SO(2n).\\
 \end{cases}
\end{displaymath}
Assume that 
$$\lambda_1 >...>\lambda_s=\lambda_{s+1}=\ldots =\lambda_{s+l-1} >\lambda_{s+l}>\ldots >\lambda_{n-1}>|\lambda_n|.$$
Then the Gromov width of the $G$ orbit $\cc{O}_{\lambda}$ through $\lambda$ is at least
$$\min\{\langle \alpha^{\vee},\lambda \rangle;\,\alpha^{\vee}\textrm{ a coroot and }\langle \alpha^{\vee},\lambda \rangle >0\}\textrm{ if }G=SO(2n+1)$$
$$\min\{ 2|\lambda_n|, \min\{\langle \alpha^{\vee},\lambda \rangle;\,\alpha^{\vee}\textrm{ a coroot and }\langle \alpha^{\vee},\lambda \rangle >0\}\}\textrm{ if }G=SO(2n).$$
\end{theorem}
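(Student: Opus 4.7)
The plan is to adapt the proof of Theorem \ref{main} to this non-regular setting. The matrix $\lambda$ is still a fixed point of the Gelfand--Tsetlin action: since $\Phi^k(\lambda)$ is block-diagonal with entries from $\{\lambda_1,\ldots,\lambda_n\}$ and already lies in the positive $SO(k)$ Weyl chamber, one has $\lambda^{(k)}_j(\lambda)=\lambda_j$ for all admissible $j,k$, so $\Lambda(\lambda)$ is again the ``maximal'' vertex of the Gelfand--Tsetlin polytope $\cc{P}$. The crucial new feature is that $\lambda\notin U$: the equalities $\lambda_s=\lambda_{s+1}=\ldots=\lambda_{s+l-1}$ put $\Lambda(\lambda)$ on a wall of the $SO(k)$-Weyl-chamber constraint for every $k$ large enough to see the degenerate block, so some Gelfand--Tsetlin functions are not smooth at $\lambda$.

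First I would verify that $\Lambda\inv(\Lambda(\lambda))=\{\lambda\}$, by the downward induction on $k$ used in the proof of Proposition \ref{polytopeodd}: at each step, Lemma \ref{openodd} or Lemma \ref{openeven}, combined with the trace/characteristic-polynomial constraint forced by membership in $\cc{O}_{\lambda}$, forces the off-diagonal block to vanish, and the block-diagonal form already obtained at smaller sizes pins down the outcome as $\lambda$. Next I would enumerate the genuine edges of $\cc{P}$ at $\Lambda(\lambda)$: among the candidate edges $E^{(2k)}_{j}$ and $F^{(2k+1)}_{j}$ of Section \ref{edges}, those with $j\in\{s,\ldots,s+l-2\}$ collapse to a single point since $\lambda_j-\lambda_{j+1}=0$, while the remaining candidates remain genuine one-dimensional edges. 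The disc and weight computations of Lemmas \ref{spherea}--\ref{weightf} carry over verbatim to these genuine edges, because the matrix manipulations in those proofs only involve indices outside the degenerate block; the collapsed directions correspond precisely to the subtorus of $T_{GT}$ that stabilises $\lambda$.

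With these ingredients I would take the centered region $\cc{T}=\bigcup\Lambda\inv(\textrm{rel-int}\,\cc{F})$ over faces $\cc{F}$ of $\cc{P}$ containing $\Lambda(\lambda)$ and pass to the quotient torus $T_{GT}/\textrm{Stab}_{T_{GT}}(\lambda)$, which acts effectively and smoothly on a neighbourhood of $\lambda$ in $\cc{O}_{\lambda}$ with $\lambda$ as its unique fixed point over $\Lambda(\lambda)$. Applying Proposition \ref{embed} to this quotient action yields a symplectic embedding of a ball whose capacity equals the minimum lattice length of a genuine edge at $\Lambda(\lambda)$. For $SO(2n+1)$, the root-system analysis of Subsection \ref{rootsystem} together with the Weyl-chamber dominance argument of Corollary \ref{minedgelengths} shows that this minimum equals $\min\{\langle\alpha^{\vee},\lambda\rangle:\alpha^{\vee}\textrm{ a coroot, }\langle\alpha^{\vee},\lambda\rangle>0\}$. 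For $SO(2n)$, the diagonal edges $E^{(2k)}_{k}$ contribute lengths $2\lambda_k$; once some consecutive differences $\lambda_j-\lambda_{j+1}$ vanish, the Weyl-chamber argument that in the regular case dominated $2\lambda_{n-1}$ by a coroot pairing can fail, and the analysis only delivers the weaker bound $\min\{2|\lambda_n|,\min\{\langle\alpha^{\vee},\lambda\rangle:\alpha^{\vee}\textrm{ a coroot, }\langle\alpha^{\vee},\lambda\rangle>0\}\}$.

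The main obstacle I expect is the rigorous justification of the centered-region construction in the presence of non-smoothness at $\lambda$. Since $\lambda\notin U$, Proposition \ref{embed} cannot be invoked directly on the full $T_{GT}$; one must identify the maximal subtorus acting effectively near $\lambda$ (equivalently, quotient by the subtorus fixing $\lambda$) and match its isotropy weights at $\lambda$ with the genuine edges of $\cc{P}$. Checking that the zero-length ``edges'' in $\cc{P}$ align exactly with the directions along which the action degenerates, so that the effective centered region really is cut out by the genuine edges alone, is the technical heart of the argument.
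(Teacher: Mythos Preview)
Your approach diverges from the paper's at the crucial step, and the obstacle you flag at the end is in fact fatal to the strategy as written. The difficulty at $\lambda$ is not merely that a subtorus of $T_{GT}$ acts trivially; it is that several of the \emph{non-constant} Gelfand--Tsetlin functions fail to be smooth there. For example, $\lambda^{(2n)}_{s+l-1}$ is not pinned by the polytope inequalities (since $\lambda_{s+l-1}=\lambda_s>\lambda_{s+l}$), yet at $\lambda$ it is an eigenvalue of a submatrix with a repeated eigenvalue and hence is not differentiable. Passing to the quotient $T_{GT}/\mathrm{Stab}_{T_{GT}}(\lambda)$ does not produce a smooth Hamiltonian action near $\lambda$, because the would-be momentum map for that quotient still involves these non-smooth eigenvalue functions. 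So Proposition~\ref{embed} cannot be applied at $\Lambda(\lambda)$, and there is no evident way to salvage a centered region there.

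The paper avoids this problem entirely by abandoning $\lambda$ and working instead at a different point $\eta\in\cc{O}_{\lambda}$ obtained by permuting the blocks so that the distinct $\lambda_j$'s occupy the top-left corner and the repeated copies of $L(\lambda_s)$ are pushed to the bottom-right. With this rearrangement every truncation $\Phi^k(\eta)$ lies in the \emph{interior} of the relevant Weyl chamber (the repetitions only show up at levels where they no longer interact), so $\eta\in U$ and the full Gelfand--Tsetlin torus acts smoothly near $\eta$. One then takes $V=\Lambda(\eta)$, checks it is a vertex of $\cc{P}$, builds the centered region around $V$, and reads off the edge lengths from the triangle pattern at $V$ exactly as in the regular case. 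This is where the $2|\lambda_n|$ term enters in the $SO(2n)$ answer: after the block permutation, $|\lambda_n|$ sits on an interior polyline of the triangle rather than on the right edge, and one of the diagonal edges at $V$ has length $2|\lambda_n|$.
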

\begin{proof}
 The dimension of the orbit is twice the number of Gelfand-Tsetlin functions that are not constant on the whole orbit. This is because the Gelfand-Tsetlin system is completely integrable for all orbits. To see that directly for the above orbit, calculate the dimension of the orbit from the $1$-skeleton of the momentum map image for the standard action of the maximal torus. This dimension is twice the number of edges in the $1$-skeleton starting at any vertex. Edges correspond to non-trivial permutations of $\lambda_j$'s. Therefore the dimension of the orbit is $2 {l \choose 2} =l(l-1)$ less then the dimension of a regular orbit. The number of Gelfand-Tsetlin functions that are forced to be constant on the whole orbit due to inequalities (\ref{polytopeineqodd}) and (\ref{polytopeineqeven}) is equal to $\frac{l(l-1)}{2}$. 
Propositions \ref{polytopeodd} and  \ref{polytopeeven} generalize to the case of not regular orbits as they were proved without any assumption on regularity.
Therefore in this case we again have that $\dim \cc{P} =\frac 1 2 \dim \cc{O}_{\lambda}$.
In this case, however, the point $\Lambda(\lambda)$ may not be in the set $U$ on which the Gelfand-Tsetlin are proved to be smooth and induce a smooth action. Consider the block diagonal matrix $\eta$
\begin{displaymath}
\eta= \begin{cases}
  \diag\,(\,L(\lambda_1), \ldots, L(\lambda_s),L(\lambda_{s+l}),\ldots, L(\lambda_n),L(\lambda_s),\ldots,L(\lambda_s),1)  &\textrm{ for }SO(2n+1)\\
  \diag\,(\,L(\lambda_1), \ldots, L(\lambda_s),L(\lambda_{s+l}),\ldots, L(\lambda_n),L(\lambda_s),\ldots,L(\lambda_s))&\textrm{ for }SO(2n).\\
 \end{cases}
\end{displaymath} That is, in the top left submatrix there are blocks $L(\lambda_j)$'s with $\lambda_j$'s all different, and the additional $L(\lambda_s)$ blocks are collected in the bottom right submatrix.
Let $V=\Lambda(\eta)$. Then $V$ is a vertex of the Gelfand-Tsetlin polytope $\cc{P}$ as each coordinate of $V$ is equal to its lower or upper bound (for more about identification of vertices of polytope see \cite{P} or \cite{Zi}). Figure \ref{vertexeta} presents equations satisfied by coordinates of $V$.
\begin{figure}
\includegraphics[width=.4\textwidth]{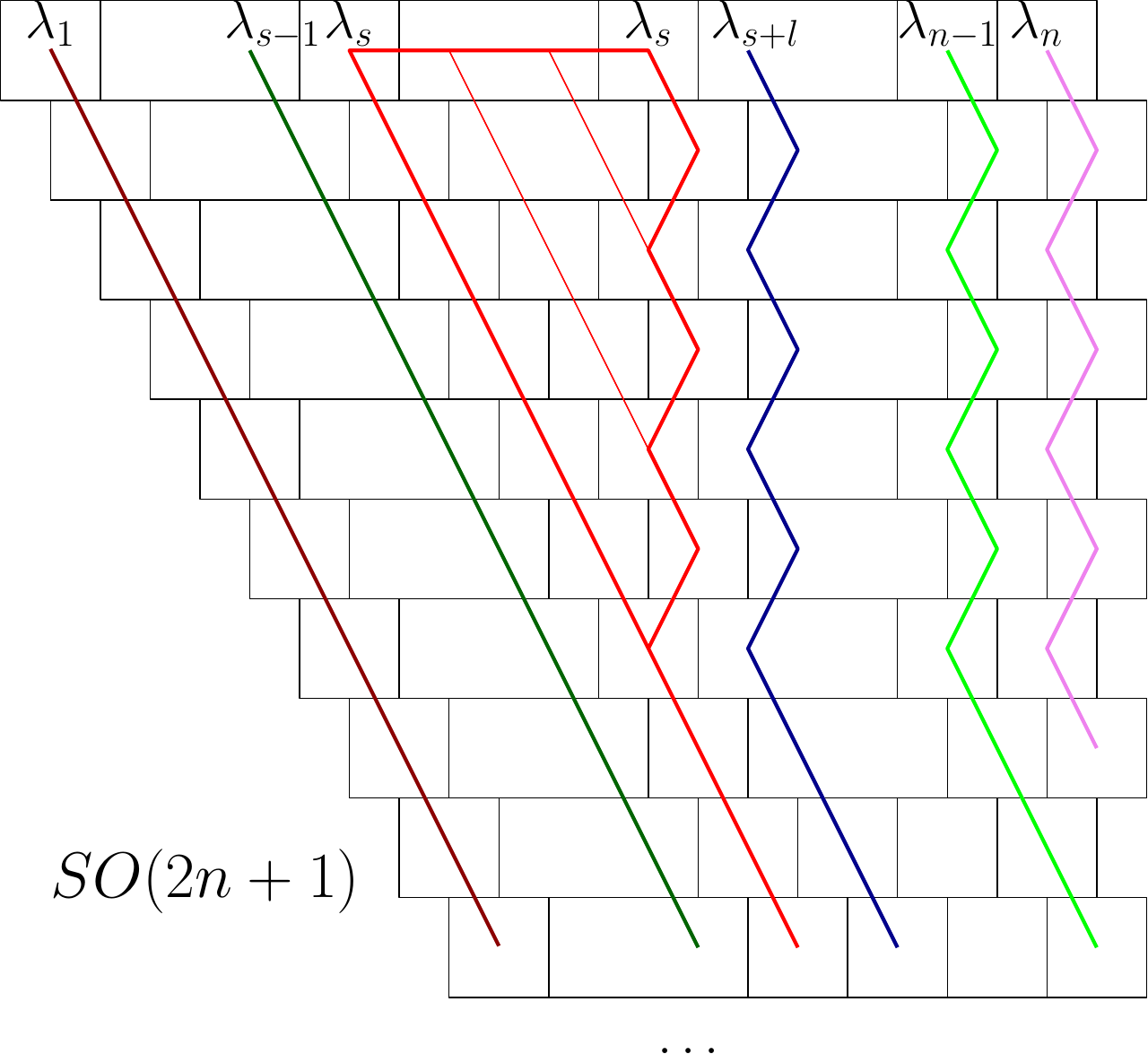} \hspace{20mm}
\includegraphics[width=.4\textwidth]{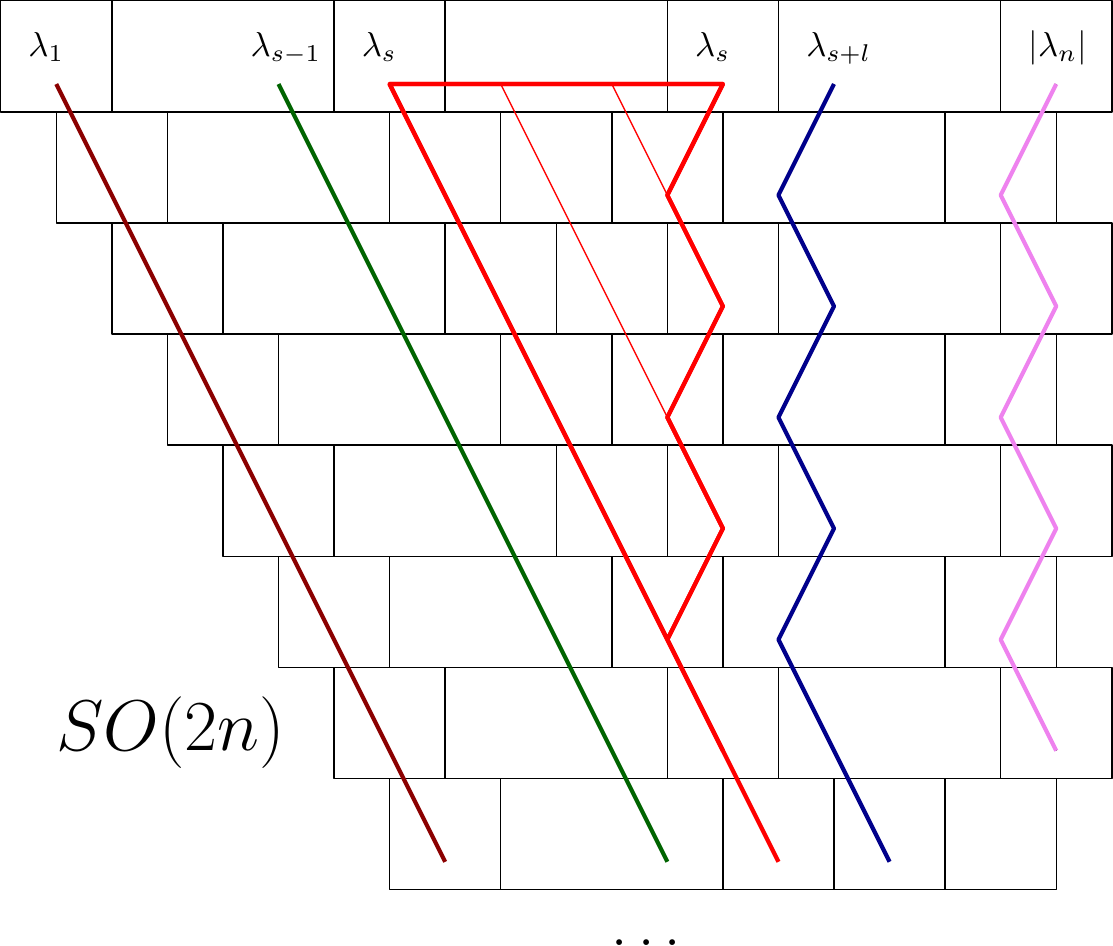}
\caption{Vertex $\Lambda(\eta)$ for the case $G=SO(2n+1)$ and $G=SO(2n)$.}\label{vertexeta}\end{figure}
The matrix $\eta$ is in $U$. 
Let $$\cc{T}= \bigcup\limits_{\substack{F \text{ face of } \cc{P} \\ V \in F}} 
   \Lambda \inv (\text{rel-int } F) .$$
Then from the definition of the set $U$ it follows that $\Lambda^{-1}(\cc{T}) \subset U$. Thus it is equipped with a smooth Gelfand-Tsetlin action and the subset $\cc{T}$ is centered around $V$. Similarly to the case of a regular orbit, we find edges of $\cc{P}$ starting from $V$ and their lengths with respect to the weights of the action. Notice that these lattice lengths are easy to read off from the triangle of equations satisfied by the vertex we start from. They are given by differences of values on two neighboring polylines in the triangle and by twice the value of the polyline hitting the right edge of the triangle. The same is true for not regular orbits, as the computations of lattice lengths is totally analogous. 
Therefore the lengths of the (subsets of) edges starting from $V$ in the $SO(2n+1)$ case are exactly
$$\{\lambda_1-\lambda_{2},\ldots,\lambda_{s-1}-\lambda_{s},\lambda_{s}-\lambda_{s+l},\ldots,\lambda_{n-1}-\lambda_{n},\,2\lambda_1, \ldots, 2\,\lambda_n\}.$$ 
The minimum over this set is equal to $\min\{\langle \alpha^{\vee},\lambda \rangle;\,\alpha^{\vee}\textrm{ a coroot and }\langle \alpha^{\vee},\lambda \rangle >0\}$ as claimed (compare with Corollary \ref{minedgelengths}.)
The lengths of the (subsets of) edges starting from $V$ in the $SO(2n)$ case are 
$$\{\lambda_1-\lambda_{2},\ldots,\lambda_{s-1}-\lambda_{s},\lambda_{s}-\lambda_{s+l},\ldots,\lambda_{n-1}-|\lambda_{n}|,\,2\lambda_1, \ldots,2\lambda_{n-1}, 2|\lambda_n|\}.$$
The minimum over this set is equal to 
$$\min\{ 2|\lambda_n|, \min\{\langle \alpha^{\vee},\lambda \rangle;\,\alpha^{\vee}\textrm{ a coroot and }\langle \alpha^{\vee},\lambda \rangle >0\}\} .$$
Similarly to the proof of the main theorem, we can apply the Proposition \ref{embed} and prove that the above values are lower bounds for Gromov width.
\end{proof}

\appendix
\section{Centered regions for non-simply laced groups.}\label{standardnotenough}
Let $G$ be a compact, connected, non-simply laced Lie group, and $T$ be a choice of maximal torus.
Choose positive Weyl chamber and let $p \in (\lie{t})^*_+$ be a point in the interior of this chamber. Consider the coadjoint orbit $M$, through $p$, and denote by $N$ the dimension of $M$. Coadjoint action of the maximal torus $T$ on $M$ is Hamiltonian. Denote the momentum map for this action by $\mu:M \rightarrow \lie{t}^*$. 
Let $\cc{Q}=\mu(\overline{\{x \in M;\,\,\dim (T\cdot x)=1\}})$ be the image of the $1$-skeleton of $M$. Then $\cc{Q}$ is an $N$-valent graph contained in the polytope $\mu (M)$. (This follows from the fact that $T$ acts on $M$ in a GKM fashion. For more about GKM manifolds see \cite{GKM}, \cite{TW}). Note that $p=\mu(p)$ is the fixed point of this action. Let $\cc{T} \subset \lie{t}^*$ be such that $\mu \inv (\cc{T})$ is centered around $p$. In particular, for any edge $E$ of $\cc{Q}$,  $E \cap \cc{T} \neq \emptyset$ if and only if $p \in E$. One could apply Proposition \ref{embed} and obtain some lower bound for Gromov width of $M$ as explained in the Example \ref{centeredexample}.
In this section we show that in the case of non-simply laced group, this lower bound is weaker (i.e. lower) then the predicted Gromov width of the  coadjoint orbit,
$$\min\left\{\left|\left\langle \alpha^{\vee},p \right\rangle\right|;\, \alpha^{\vee} \textrm{ a coroot }\right\}.$$
This observation makes our result for the $SO(2n+1)$ coadjoint orbits even more interesting, as the root system for $SO(2n+1)$ is non-simply laced. 

Let $\alpha,\beta \, \in \lie{t}^*$ be two roots of Euclidean lengths $||\alpha|| >||\beta||$.
For any root $\eta$ let $\sigma_{\eta}:\lie{t}^* \rightarrow \lie{t}^*$ denote the reflection through hyperplane perpendicular to $\eta$.
Then the image of $\alpha$ under the reflection $\sigma_{\beta}$,
$$\sigma_{\beta}(\alpha):= \alpha - 2 \frac{\langle \beta,\alpha\rangle}{\langle\beta,\beta\rangle}\beta=\alpha -\langle \beta^{\vee},\alpha\rangle\,\beta,$$ is also a root
(see condition R3 in III.9.2 of \cite{H}).
What is more, $$||\alpha||=|| \sigma_{\beta}(\alpha)||.$$
For any root $\eta$, the points $p$ and $\sigma_{\eta}(p)$ are connected by an edge of $\cc{Q}$. In particular there exist and edge in $\cc{Q}$ joining $p$ with a point 
$$\sigma_{\alpha}(p):=p-2 \frac{\langle\alpha,p\rangle}{\langle\alpha,\alpha\rangle}\alpha.$$ Call this edge $E_1$.
Denote by $E_2$ the edge in $\cc{Q}$ from $\sigma_{\beta}(p)$ in the direction of $\sigma_{\beta}(\alpha)$, 
joining $\sigma_{\beta}(p)$ with a vertex $\sigma_{\sigma_{\beta}(\alpha)}(\sigma_{\beta}(p))$ 
we will denote by $E_2$. The definition of centered region implies that the edge $E_2$ has to be disjoint from $\cc{T}$.
We want to know how big portion of the edge $E_1$ is contained in $\cc{T}$. Definitely the intersection of edges $E_1$ and $E_2$ is not in $\cc{T}$.
These edges intersect if there exists $t,s$ such that
$$ \sigma_{\beta}(p) + s \sigma_{\beta}(\alpha)=p+t\alpha.$$
This means:\begin{align*}
p+t\alpha&= \sigma_{\beta}(p) + s \sigma_{\beta}(\alpha)=
 p- 2 \frac{\langle\beta,p\rangle}{\langle\beta,\beta\rangle}\beta + s \left( \alpha - 2 \frac{\langle\beta,\alpha\rangle}{\langle\beta,\beta\rangle}\beta\right),\\
t\alpha&=- 2\, \frac{\langle\beta,p\rangle}{\langle\beta,\beta\rangle}\,\beta + s \alpha - 2 s\, \frac{\langle\beta,\alpha\rangle}{\langle\beta,\beta\rangle}\,\beta,\\
(t-s)\alpha&=- \, \frac{2}{\langle\beta,\beta\rangle}\,\left(\, \langle\beta , p\rangle+s \langle\beta,\alpha\rangle \,\right)\,\beta. \end{align*}
As $\alpha$ and $\beta$ are different of different lengths, the only solution to the above equation is when $t=s$ and $\langle\beta,p\rangle+s \langle\beta,\alpha\rangle =0$. The point $p$ was chosen from the interior of the positive Weyl chamber, thus $\langle\beta,p\rangle \neq 0$. 
The solution exists if also $\langle\beta,\alpha\rangle \neq0$ and is  
$$t=s=-\,\frac{\langle\beta,p\rangle}{\langle\beta,\alpha\rangle}=-2\,\frac{\langle\beta,p\rangle}{\langle\beta,\beta\rangle}\,\left(\frac{2\langle\beta,\alpha\rangle}{\langle\beta,\beta\rangle}\right)^{-1}.$$
The values of $\frac{2\langle\beta,\alpha\rangle}{\langle\beta,\beta\rangle}$ can only be $0,\pm 1,\pm 2,\pm 3$ (\cite[Chapter 9]{H}). By the above, we know it is not $0$.
If $\frac{2\langle\beta,\alpha\rangle}{\langle\beta,\beta\rangle}=\pm 1$, then $||\alpha||=||\beta||$ (\cite{H}) contrary to our assumptions.
Thus it has to be $\pm 2$ or $\pm 3$. In both cases we get that the solution 
$$|t|=2\left|\frac{\langle\beta,p\rangle}{\langle\beta,\beta\rangle}\,\left(\frac{2\langle\beta,\alpha\rangle}{\langle\beta,\beta\rangle}\right)^{-1}\right|<2\left|\frac{\langle\beta,p\rangle}{\langle\beta,\beta\rangle}\right|=\langle\beta^{\vee},p\rangle.$$
This means that the portion of the edge $E_1$ contained in $\cc{T}$ has length strictly less then $\langle\beta^{\vee},p\rangle \,||\alpha||.$
Therefore the lower bound for Gromov width that we can obtain from the centered region $\cc{T}$ is 
less then $\langle\beta^{\vee},p\rangle$ (the isotropy weight along the sphere $\mu \inv (E_1)$ is $(-\alpha)$).
It may happen that the minimum min$\{ \left|\,\langle\alpha_j^{\vee},p\rangle\,\right|; \alpha_j \textrm{ a  coroot}\}$ is equal to $\langle\beta^{\vee},p\rangle.$
In this case, the predicted lower bound of Gromov width of the orbit is strictly greater then the bound one could get from the centered region for the standard action of the maximal torus. 

For example, consider $SO(5)$ coadjoint orbit $M$ through a block diagonal matrix $p=\diag(L(6),L(1),1)$ in $\lie{so}(5)^*$.
The momentum polytope $\mu (M)$, together with the image of $1$-skeleton are presented on Figure \ref{notlacedeg}.
\begin{figure}\label{notlacedeg}
\includegraphics[width=0.6\textwidth]{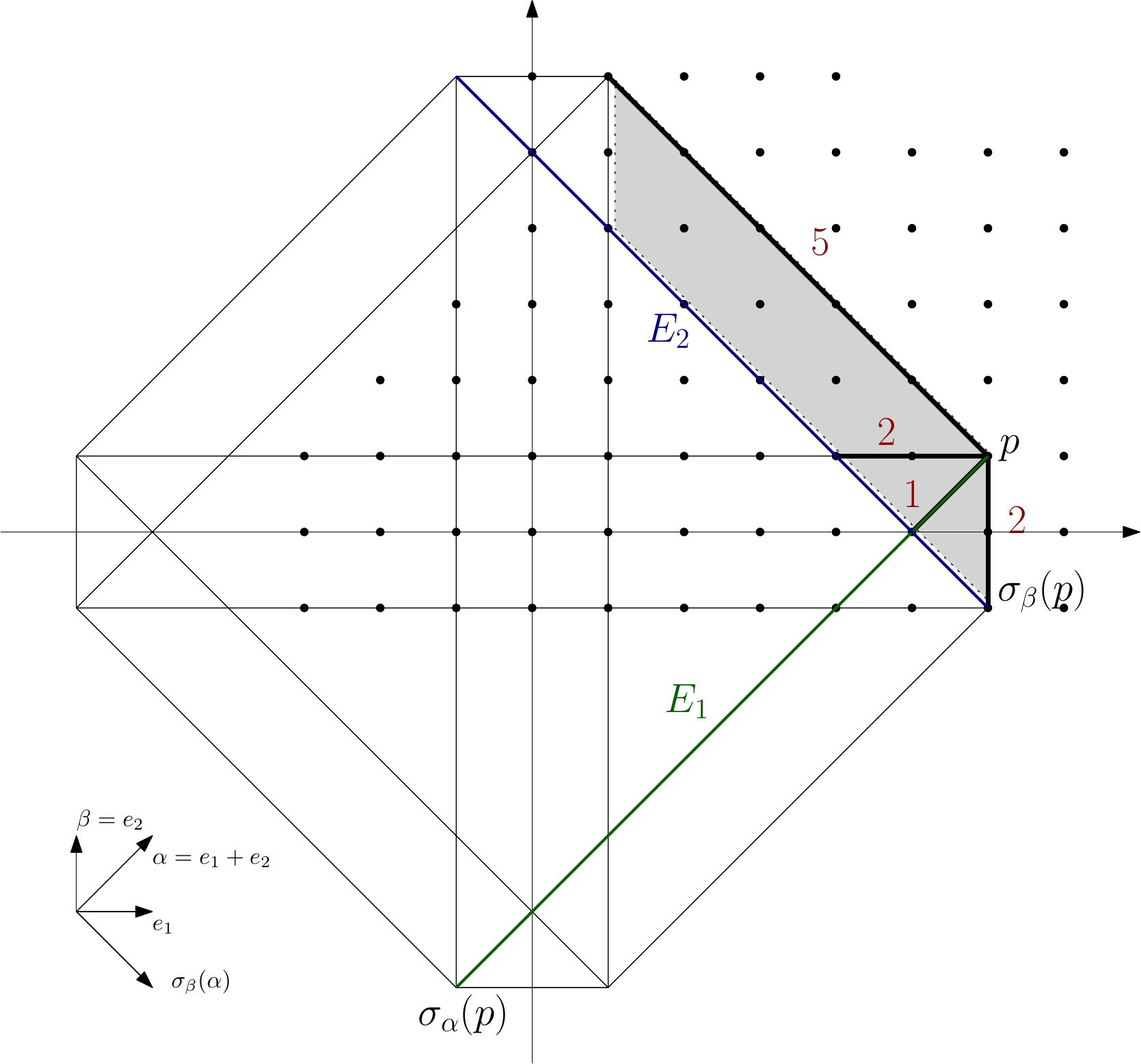}
\caption{One-skeleton of $SO(5)$ coadjoint orbit}\label{notlacedeg}
\end{figure}
Edge lengths are given with respect to the weight lattice. Preimage of the shaded region is the maximal subset centered around $p$ for the standard action of maximal torus. The portion of edge $E_1$ contained in this region is of length 
$$\left|\frac{\langle e_2, (6,1)\rangle}{\langle e_2, e_1+e_2\rangle}\right|=1.$$
Therefore using this centered region, we can construct embeddings of a ball of capacity at most $1$. Regions centered at the other fixed points would give the same result. The Theorem \ref{main} provides a better lower bound, because the pairings of $p$ with coroots $e_1^{\vee},e_2^{\vee},(e_1+e_2)^{\vee},(e_1-e_2)^{\vee}$ give (respectively): $12,\,2,\,7,\,5$ and minimum of this set is $2$.
\section{Proofs of Lemmas \ref{openodd} and \ref{openeven}}\label{polytopeproof}
{\bf Proof Lemma \ref{openodd}.}
We are given real numbers \begin{equation} \label{ineqodd} b_1 \geq a_1 \geq b_2 \geq a_2 \geq \ldots \geq a_{k-1} \geq b_{k} \geq |a_{k}|\end{equation}
and we are to show that there exist a real vector $Y=[y_1, \ldots, y_{2k}]^T$ in $\bb{R}^{2k}$ 
such that
the skew symmetric matrices 
\small
\begin{displaymath}
A:=\left(\begin{array}{c|c}
\begin{array}{cccc}
  L(a_1) &  & &\\
 &L(a_2)   & &\\
& &\ddots & \\
&&& L(a_k)
    \end{array} & Y\\
\hline
-Y^T &0 
\end{array}
\right)\textrm{  and  }S:=\left(\begin{array}{c|c}
\begin{array}{cccc}
  L(b_1) &  & &\\
 &L(b_2)   & &\\
& &\ddots & \\
&&& L(b_k)
    \end{array} & 0\\
\hline
0 &0 
\end{array}
\right).
\end{displaymath} \normalsize are in the same $SO(2k+1)$ orbit. 
\begin{proof} 
Two matrices in $\lie{so}(2k+1)^*$ are in the same $SO(2k+1)$ orbit if and only if they have the same characteristic polynomial.
The characteristic polynomial for $A$, $\chi_A(t)$ is
\small\begin{displaymath} \chi_A(t)=\left| \begin{array}{cccccc}
t &a_1&&&&-y_1\\-a_1 &t&&&&-y_2\\&& \ddots &&&\vdots\\ &&&t&a_k&-y_{2k-1}\\&&&-a_k&t&-y_{2k}\\y_1&y_2&\ldots&y_{2k-1}&y_{2k}&t\end{array} \right| \end{displaymath}
\begin{displaymath}
=-y_1\left(-a_1 \left| \begin{array}{cccccc} 0&t&a_2&&&\\0&-a_2&t &&&\\&&&\ddots&& \\ &&&&t&a_k\\&&&&-a_k&t\\ y_2&y_3&y_4&\ldots&y_{2k-1}&y_{2k}\end{array} \right|
-y_1\left| \begin{array}{cccccc} t&0&0&&&0\\0&t&a_2&&&\\0&-a_2&t &&&\\&&&\ddots&& \\ &&&&t&a_k\\0&&&&-a_k&t\end{array} \right|\right)+
\end{displaymath}\normalsize
\begin{displaymath}+y_2\left( -a_1\,(y_1 \prod_{j\neq 1}(t^2+a_j^2))\,+y_2\,(t\prod_{j\neq 1}(t^2+a_j^2))\,\right)+\ldots +t\prod_{j=1}^{k}(t^2+a_j^2)
\end{displaymath}
\begin{displaymath}
=(a_1y_1y_2+y_1^2t-a_1y_1y_2+y_2^2t)\,\prod_{j=2}^k(t^2+a_j^2)+\ldots + (y_{2k-1}^2+y_{2k}^2)\,t\,\prod_{j=1}^{k-1}(t^2+a_j^2)+t\prod_{j=1}^{k}(t^2+a_j^2)
\end{displaymath}
\begin{displaymath}
 =t\sum_{l=1}^{k}\,(y_{2l-1}^2+y_{2l}^2)\,\prod_{j\neq l}(t^2+a_j^2)+t\prod_{j=1}^{k}(t^2+a_j^2).\end{displaymath}
 The characteristic polynomial for $S$ is  $\chi_S(t)=t\,\prod_{j=1}^k(t^2+b_j^2).$
Simplifying $t$ we get the equation
\begin{equation}\label{charpolyodd}
\sum_{l=1}^{k}\,(y_{2l-1}^2+y_{2l}^2)\,\,\prod_{j\neq l}(t^2+a_j^2)+\prod_{j=1}^{k}(t^2+a_j^2)=\prod_{j=1}^k(t^2+b_j^2).
\end{equation}
{\it Case 1.} Assume first that $a$ and $b$ are regular, that is 
\begin{equation} \label{abregular} b_1 > a_1 > b_2 > a_2 > \ldots > a_{k-1} > b_{k} > |a_{k}|. \end{equation}
Then we can write the Equation \ref{charpolyodd} as 
$$\prod_{j=1}^{k}(t^2+a_j^2)\left( 1+\sum_{l=1}^{k}\,\frac{y_{2l-1}^2+y_{2l}^2}{t^2+a_l^2}\right)=\prod_{j=1}^k(t^2+b_j^2).$$
Substituting $t=\pm i b_s$ for $s=1, \ldots, k$ we get the system of equations
$$\forall_{s=1,\ldots,k}\,\,\,\,\,\left( 1+\sum_{l=1}^{k}\,\frac{y_{2l-1}^2+y_{2l}^2}{-b_s^2+a_l^2}\right)=0.$$
Introduce the notation
$$w_l=y_{2l-1}^2+y_{2l}^2.$$ Solving the Equation \ref{charpolyodd} for regular case is equivalent to finding nonnegative solution
in $w's$ to the system of linear conditions
\begin{equation}
 \forall_{s=1,\ldots,k}\,\,\,\,\,\sum_{l=1}^{k}\,\frac{w_l}{b_s^2-a_l^2}=1.
\end{equation}
Denote by $M=[m_{sl}]$, $m_{sl}=\frac{1}{b_s^2-a_l^2}$ the matrix of this system of equations.
Matrices of this type are called Cauchy matrices. In 1959 Schechter (\cite{S}) proved that 
$$\det M= \frac{\prod_{i=2}^k\prod_{j=1}^{i-1}(b_i^2-b_j^2)(a_i^2-a_j^2)}{\prod_{i=1}^k\prod_{j=1}^{k}(b_i^2-a_j^2)}\neq 0.$$
Moreover, he showed that the inverse matrix $M\inv =[m^{ij}]$ is given by the formula
$$m^{ij}=(b_j^2-a_i^2)B_j(a_i^2)A_i(b_j^2)$$
where $B_j(x),A_i(x)$ are the Lagrange polynomials for $(b_i^2)$ and $(a_j^2)$. This means that 
$$A_i(x)=\frac{A(x)}{A'(a_i^2)(x-a_i^2)} \textrm{  and  } B_i(x)=\frac{B(x)}{B'(b_i^2)(x-b_i^2)},$$
with $$A(x)=\prod_{i=1}^k(x-a_i^2) \textrm{  and  } B(x)=\prod_{i=1}^k(x-b_i^2).$$
Therefore, the solution to our system is given by (see also \cite[Ch VIII]{M})
$$w_l=-\,\frac{\prod_{j=1}^n(a_l^2-b_j^2)}{\prod_{j\neq l,\,j=1}^n(a_l^2-a_j^2)}.$$
Notice that, due to inequalities \ref{abregular}, the numerator is positive if and only if $\sharp \{j;\,j\geq l\}$ is even, while the denominator is positive if and only if
$\sharp \{j;\,j > l\}$ is even. Thus $w_l$ is always positive, as required.

If the inequalities \ref{ineqodd} are not satisfied, then some $w_l$ is negative and therefore there is no solution in $y$'s.

{\it Case 2.} Suppose that $b$ is regular but $a$ is not, that is there exists $j_0$ such that $a_{j_0}=b_m$ (that is $m=j_0$ or $j_0+1$).

Suppose for a moment that $a_{j_0}$ is the only coordinate of $a$ that is equal to $b_m$, that is, $b_m \neq a_j$ for all $j \neq j_0$. Then, substituting $t=ib_m$ in Equation (\ref{charpolyodd}), we get that 
$$w_{j_0}\, \prod_{j \neq j_0}(a_j^2-b_m^2)=0,$$
thus $w_{j_0}=0$. Therefore $y_{2j_0-1}=y_{2j_0}=0$. 
This means that ever term in Equation \ref{charpolyodd} contains a factor $(t^2+b_m^2)$ and we can simplify this factor.
Then we arrive at the equation with just $k-1$ variables $w_1, \ldots, \widehat{w_{j_0}}, \ldots w_k$ and $2k-2$ parameters which are now regular
or at least less degenerate. Repeating this step if necessary, we get to the equation similar to Equation (\ref{charpolyodd}) that is regular
(and has less variables and parameters).

Now suppose that $a_{j_0}$ is not the only coordinate of $a$ that is equal to $b_m$. As $b$ is regular, this can happen if and only if 
$a_{m-1}=b_m=a_m$. Now every term in Equation \ref{charpolyodd} contains a factor $(t^2+b_m^2)$. 
We simplify this factor. Introducing new variables and parameters for $j=1, \ldots, k-1$\footnotesize 
\begin{displaymath}
 \widetilde{a_j}=\begin{cases}a_j & j < m\\
              a_{j+1} & j \geq m
             \end{cases},\,\,\,
\widetilde{b_j}=\begin{cases}b_j & j < m\\
              b_{j+1} & j \geq m
             \end{cases},\,\,\,
\widetilde{w_j}=\begin{cases}w_j & j < m-1\\
w_{m-1}+w_m&j=m-1\\
              w_{j+1} & j >m-1
             \end{cases}
\end{displaymath}
\normalsize
we get the equation
$$\sum_{l=1}^{k-1}\,(\widetilde{w_l})\,\,\prod_{j\neq l}(t^2+\widetilde{a_j}^2)+\prod_{j=1}^{k}(t^2+\widetilde{a_j}^2)=\prod_{j=1}^k(t^2+\widetilde{b_j}^2),$$
which is regular or at least less degenerate then the one we started with. Repeating the above steps if necessary, we obtain a regular equation and can find the solution using the inverse of appropriate Cauchy matrix.

{\it Case 3.} Now we deal with the case of $b$ non-regular. Again we will try to reduce it, step by step, to the regular case. Suppose that $b_j=b_{j+1}$ for some index $j$. Then $a_j$ is forced by the inequalities (\ref{polytopeineqodd}) to be also equal to $b_j$.

If no other $a_l$ is equal to $a_j$, then substituting $t=ib_j$ we obtain that $w_j=0$. Therefore $y_{2j-1}=y_{2j}=0$. This means that every term in the Equation (\ref{charpolyodd}) contains the factor $(t^2 +b_j^2)$. Simplifying this factor we arrive at the equation that is one step less degenerate.

If there are other $a_l$ also equal to $a_j$, then every term in the Equation (\ref{charpolyodd}) contains the factor $(t^2 +b_j^2)$. We can simplify this factor and, similarly to the case above, introduce new variables to obtain an equation that is one step less degenerate.\\
\indent It is clear from the proof that if there exists unique index $j$ such that $a_j=b_m$, then $y_{2j-1}=y_{2j}=0$.
\end{proof}
{\bf Proof of Lemma \ref{openeven}.} Now we proof the even dimensional analogue, that is Lemma \ref{openeven}. We are given real numbers \begin{equation}\label{ineqeven}
 a_1 \geq b_1 \geq a_2 \geq b_2\geq \ldots \geq b_{k-1} \geq |a_{k}|\end{equation} and we are to find a real vector $Y=[y_1, \ldots, y_{2k-1}]^T$ in $\bb{R}^{2k-1}$ 
such that
the skew symmetric matrices 
\begin{displaymath}
A:=\left(\begin{array}{c|c}
\begin{array}{ccccc}
  L(b_1) &  && &\\
 &L(b_2)   && &\\
& &\ddots & &\\
&&& L(b_{k-1})&\\&&&&0
    \end{array} & Y\\
\hline
-Y^T &0 
\end{array}
\right)\textrm{  and  }S:=\left(
\begin{array}{cccc}
  L(a_1) &  & &\\
 &L(a_2)   & &\\
& &\ddots & \\
&&& L(a_k)
    \end{array} 
\right).
\end{displaymath} are in the same $SO(2k)$ orbit. 

If two matrices in $\lie{so}(2k)^*$ are in the same $SO(2k)$ orbit, then in particular they have the same characteristic polynomial. We could proceed as in the odd dimensional case and start with comparing the characteristic polynomials of $A$ and $S$. This would again involve, for regular case, solving some linear system of equations, with unknowns $\{y_{2l-1}^2+y_{2l}^2, y_{2k-1}\}$, given by a Cauchy matrix. 
By the result of Schechter we know the inverse matrix, but it is still computationally challenging to show that the solution is nonnegative (except possibly at $y_{2k-1}$). For this reason, and to present another approach, we will proceed differently. We will transform the problem into a problem for the unitary case and use the same theorems that were used in \cite{P}. In particular we use the following Lemma, which is a slight strengthening of Lemma 3.6 in \cite{P}(Lemma 3.5 in \cite{NNU}, see also \cite{GS2}).
\begin{lemma}\label{open}
 For any real numbers $\mu_1 \geq \nu_1 \geq \mu_2 \geq \ldots \geq \mu_{2k-1} \geq \nu_{2k-1} \geq \mu_{2k}$ there exist $x_1, \ldots, x_{2k-1}$ in $\bb{C}$ and $x_{2k}$ in $\bb{R}$ such that
the Hermitian matrix  
\begin{displaymath}
A:=\left(\begin{array}{cccc}
    \nu_1&  & 0& \bar{x}_1\\
& \ddots & & \vdots\\
0& & \nu_{2k-1} & \bar{x}_{2k-1}\\
x_1& \hdots & x_{2k-1}& x_{2k}
    \end{array}
\right),
\end{displaymath}
has eigenvalues $\mu_1, \ldots , \mu_{2k}$. The solution is not unique: only the values $|x_1|, \ldots, |x_{2k-1}|$ and $x_{2k}$ are uniquely defined. Inequalities between $\mu_j$ and $\nu_j$ are necessary for such $x_1, \ldots, x_{k+1}$ to exist. Moreover \\
1. If $m$ is the unique index such that $\mu_j=\nu_m$ then $x_m=0$.\\
2. Suppose that $\nu_l=-\nu_{2k-l},\,\mu_l=-\mu_{2k+1-l},$ for $l=1,\dots, k$,
(so $\nu_{k}=0$). Then $|x_l|=|x_{2k-l}|$ for $l=1,\dots, k$ and $x_{2k}=0$.
\end{lemma}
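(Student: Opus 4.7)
\noindent\textbf{Proof proposal for Lemma \ref{open}.} The plan is to compute the characteristic polynomial of $A$ explicitly, reduce the problem to a system of polynomial equations in $|x_1|^2,\dots,|x_{2k-1}|^2$ and $x_{2k}$, and then show solvability and nonnegativity using the interlacing inequalities between the $\mu_j$'s and $\nu_j$'s. The key observation is that expanding $\det(tI-A)$ along the last row (or column) and recognizing that all off-diagonal entries of the first $(2k-1)\times(2k-1)$ block vanish yields
\begin{equation*}
\chi_A(t)=(t-x_{2k})\prod_{j=1}^{2k-1}(t-\nu_j)-\sum_{l=1}^{2k-1}|x_l|^2\prod_{j\neq l}(t-\nu_j).
\end{equation*}
Setting this equal to $\prod_{j=1}^{2k}(t-\mu_j)$ and comparing the coefficient of $t^{2k-1}$ immediately gives $x_{2k}=\sum_j\mu_j-\sum_j\nu_j$, which is real. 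So only $|x_1|,\dots,|x_{2k-1}|$ remain to be determined, confirming the uniqueness claim.

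\noindent\textbf{Regular case.} Assume the inequalities are strict: $\mu_1>\nu_1>\mu_2>\dots>\nu_{2k-1}>\mu_{2k}$. Substituting $t=\nu_s$ into $\chi_A(t)=\prod(t-\mu_j)$ kills every term in the sum except $l=s$, giving
\begin{equation*}
|x_s|^2=-\,\frac{\prod_{j=1}^{2k}(\nu_s-\mu_j)}{\prod_{j\neq s}(\nu_s-\nu_j)}.
\end{equation*}
The denominator has sign $(-1)^{2k-1-s}(-1)^{0}\cdot(\text{positive})$, i.e.\ its sign is $(-1)^{s-1}$ since there are $s-1$ negative factors and $2k-1-s$ positive ones; the numerator has $s$ negative factors $(\nu_s-\mu_j)<0$ for $j\le s$ and $2k-s$ positive factors for $j>s$, so its sign is $(-1)^s$. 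With the overall minus sign, $|x_s|^2>0$. Once the $|x_s|^2$ are so determined, one verifies that the resulting polynomial on the left agrees with $\prod(t-\mu_j)$ at $2k$ distinct points (the $\nu_s$ together with $\infty$, via the leading coefficient/linear coefficient match), so equality holds. This shows existence in the regular case.

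\noindent\textbf{Degenerate cases and necessity.} If some $\nu_s=\mu_m$, then the numerator above vanishes, forcing $|x_s|^2=0$ so $x_s=0$. In that situation the factor $(t-\nu_s)$ divides every term of $\chi_A(t)$ and of $\prod(t-\mu_j)$, and after canceling this common factor we obtain the same problem with one fewer pair of parameters, which is either regular or still degenerate; iterate. If instead some $\nu_s=\nu_{s+1}$, the interlacing forces $\mu_{s+1}=\nu_s=\nu_{s+1}$, and the same cancellation argument applies. This handles all boundary cases (and simultaneously proves statement (1) of the ``Moreover''). For necessity: if the interlacing fails at some index $s$, then the sign analysis above gives $|x_s|^2<0$, impossible; so the inequalities $\mu_1\ge\nu_1\ge\dots\ge\mu_{2k}$ are indeed forced.

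\noindent\textbf{The symmetric case (statement 2).} Assume $\nu_l=-\nu_{2k-l}$ and $\mu_l=-\mu_{2k+1-l}$. Then $\sum\mu_j=\sum\nu_j=0$, so $x_{2k}=0$ from the formula above. For the moduli, substitute $t=\nu_l$ and $t=\nu_{2k-l}=-\nu_l$ into the explicit formula for $|x_s|^2$ with $s=l$ and $s=2k-l$ respectively; the symmetry of the parameter sets means the two resulting products differ only by an overall sign $(-1)^{2k}=1$ in both numerator and denominator, so $|x_l|^2=|x_{2k-l}|^2$. The hard step here is really just bookkeeping the signs carefully in the product, but it is mechanical once the general formula is in hand.

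\noindent The main obstacle I anticipate is the sign verification in the regular case, which requires counting negative factors in both numerator and denominator correctly; everything else (degenerate cases by limits/reduction, necessity by sign contradiction, and the symmetric case) follows cleanly from the explicit formula for $|x_s|^2$.
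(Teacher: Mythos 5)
Your proposal proves more than the paper does: the paper cites the main existence/uniqueness part of Lemma~\ref{open} to \cite{P}, \cite{NNU}, \cite{GS2} and only proves the strengthening statements~1 and~2. Your existence argument via the explicit formula
\begin{equation*}
|x_s|^2=-\,\frac{\prod_{j=1}^{2k}(\nu_s-\mu_j)}{\prod_{j\neq s}(\nu_s-\nu_j)},
\end{equation*}
together with the sign count and the observation that $2k-1$ interpolation points plus the matched $t^{2k-1}$ coefficient pin down the monic degree-$2k$ polynomial, is standard and correct. Your proof of statement~1 is the same as the paper's (substitute the repeated value into the characteristic-polynomial identity to force one $|x_m|^2$ to vanish), and your derivation of $x_{2k}=\sum\mu_j-\sum\nu_j$ is the same trace computation.

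Where you genuinely diverge from the paper is statement~2. You deduce $|x_l|=|x_{2k-l}|$ by plugging $\nu_l$ and $\nu_{2k-l}=-\nu_l$ into the explicit formula and using the symmetry of the $\mu$'s and $\nu$'s. That works verbatim in the strictly interlacing case, but when there are equalities the formula as written has a zero denominator, so you would need to verify that the symmetry $l\leftrightarrow 2k-l$ is preserved at every step of your cancellation/reduction procedure, which you only gesture at. The paper instead conjugates $A$ by the permutation swapping index $l$ with $2k-l$: this is a unitary change of basis, so the eigenvalues are unchanged, and under the symmetry hypothesis on the $\nu$'s and $\mu$'s the negative of the conjugated matrix has the same diagonal entries and the same spectrum as $A$. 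Hence $(-x_{2k-1},\dots,-x_1,0)$ is another solution, and uniqueness of the moduli immediately gives $|x_l|=|x_{2k-l}|$ and $x_{2k}=0$ with no case analysis or reliance on strictness. If you keep your computational route, you should spell out why the symmetry survives each reduction; otherwise the conjugation argument is the cleaner choice.
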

\begin{proof} Here we only prove the additional, strengthening statements $1$ and $2$.

 1. The characteristic polynomial of matrix $A$ is
$$t\prod_{l=1}^{2k-1}(t-\nu_l)-\sum_{i=1}^{2k-1} |x_i|^2 \prod_{l\neq i}(t-\nu_l).$$
This must be equal to $\prod_{l=1}^{2k}(t-\mu_l),$ the characteristic polynomial of $S$. 
Therefore, substituting $t=\mu_j$ we get
$$0=t\prod_{l=1}^{2k-1}(\mu_j-\nu_l)-\sum_{i=1}^{2k-1} |x_i|^2 \prod_{l\neq i}(\mu_j-\nu_l)=-|x_m|^2\prod_{l\neq m}(\mu_j-\nu_l).$$
This means that $x_m=0$, because $m$ is the unique index such that $\mu_j=\nu_m$.

2. The trace of $A$ is $0=\sum_{l=1}^{2k} \mu_l=\sum_{l=1}^{2k-1} \nu_l+x_{2k}$, thus $x_{2k}=0.$ 
Notice that conjugating $A$ with a matrix of permutation switching $l$ with $2k-l$, for $l=1, \ldots, k$,
(which is in $U(n)$), will give the matrix $A'$, with the same eigenvalues as $A$.
\begin{displaymath}
A':=\left(\begin{array}{cccc}
    \nu_{2k-1}&  & 0& \bar{x}_{2k-1}\\
& \ddots & & \vdots\\
0& & \nu_1 & \bar{x}_1\\
x_{2k-1}& \hdots & x_1& 0
    \end{array}
\right)=
\left(\begin{array}{cccc}
   - \nu_1&  & 0& \bar{x}_{2k-1}\\
& \ddots & & \vdots\\
0& & -\nu_{2k-1} & \bar{x}_1\\
x_{2k-1}& \hdots & x_1& 0
    \end{array}
\right)
\end{displaymath}
Eigenvalues of $(-A')$ are $\{-\mu_l;\, l=1,\ldots 2k\}=\{\mu_l;\,l=1,\ldots 2k\}$, the same as of the matrix $A$.
Therefore the sequence $(-x_{2k-1},\ldots, -x_1,0)$ is also a solution to question in the Lemma \ref{open}.
For such a solution the absolute values are uniquely defined. Therefore $|x_l|=|-x_{2k-l}|=|x_{2k-l}|$ for $l=1, \ldots, k$.
\end{proof}
Now we are ready to prove Lemma \ref{openeven}.\begin{proof}
 Applying the Lemma \ref{open} we get that there exists $X=(x_1, \ldots, x_{2k-1}) \in \bb{C}^{2k-1}$, such that the matrix\footnotesize
\begin{displaymath}
\left(\begin{array}{ccccccccc|c}
b_1&&&&&&&&&\overline{x}_1\\ 
&b_2&&&&&&&&\overline{x}_3\\ 
&&\ddots&&&&&&&\vdots\\
&&&b_{k-1}&&&&&&\overline{x}_{2k-3}\\
&&&&0&&&&&\overline{x}_{2k-1}\\
&&&&&-b_{k-1}&&&&\overline{x}_{2k-2}\\
&&&&&&\ddots&&&\vdots\\
     &&&&&&&-b_2&&\overline{x}_4\\ 
&&&&&&&&-b_1&\overline{x}_2\\           
\hline
x_1&x_3&\hdots&x_{2k-3}&x_{2k-1}&x_{2k-2}&\hdots&x_4&x_2&0
\end{array}
\right)
\end{displaymath}\normalsize has eigenvalues $(a_1,\ldots,|a_{k}|,-|a_{k}|,\ldots,-a_1)$, and $|x_{2j-1}|=|x_{2j}|$ for $j=1,\ldots, k-1$.
Conjugating with a permutation matrix (which is also in $U(2k)$) will not change the eigenvalues. Therefore there exist a matrix $B \in U(2k)$ such that\scriptsize
\begin{equation}\label{b6}
B\,\left(\begin{array}{cccccc|c}
b_1&&&&&&\overline{x}_1\\ 
&-b_1&&&&&\overline{x}_2\\ 
&&\ddots&&&&\vdots\\
&&&b_{k-1}&&&\overline{x}_{2k-3}\\
&&&&-b_{k-1}&&\overline{x}_{2k-2}\\
&&&&&0&\overline{x}_{2k-1}\\
\hline
x_1&x_2&\hdots&x_{2k-3}&x_{2k-2}&x_{2k-1}&0
\end{array}
\right)\,B\inv=\left(\begin{array}{ccccc}
a_1&&&&\\ 
&-a_1&&&\\ 
&&\ddots&&\\
&&&a_k&\\
&&&&-a_k
\end{array}
\right)
\end{equation}\normalsize Notice that 
\begin{displaymath}
\left(
 \begin{array}{cc}
1&i \\
i&1
\end{array}
\right)\,
\left(
 \begin{array}{cc}
 0 &-x\\
x&0
\end{array}
\right)\,
\left(
 \begin{array}{cc}
 1&-i \\
-i&1
\end{array}
\right)\,=2\,
\left(
 \begin{array}{cc}
 ix&0\\
0&-ix
\end{array}
\right).
\end{displaymath}
Define the matrices $J_m \in U(2m)$, $L_m\in U(2m+1)$ in the following way \small
\begin{displaymath}
J_m:=\frac{1}{\sqrt{2}}\left(
 \begin{array}{ccc}
\begin{array}{cc}
1&i \\
i&1
\end{array}&&\\
&\ddots&\\
&&\begin{array}{cc}
1&i \\
i&1
\end{array}
  \end{array}
 \right),\,\,\,
L_m:=\left(
 \begin{array}{c|c}
J_m&0 \\
\hline
0&1
\end{array}
 \right).
\end{displaymath} \normalsize
We will surpress $m$ from the notation when the dimension is understood. 
Have \small
\begin{displaymath}
J\,\left( \begin{array}{ccc}
        L(a_1)&&\\
&\ddots&\\
&&L(a_k) \end{array}\right)\,J\inv=
\left(\begin{array}{ccccc}
ia_1&&&&\\
&-ia_1&&&\\
&&\ddots&&\\
&&&ia_k&\\
&&&&-ia_k
\end{array}
\right).
\end{displaymath} \normalsize Also \small \begin{displaymath}
i\,\left(\begin{array}{cccccc|c}
b_1&&&&&&\overline{x}_1\\ 
&-b_1&&&&&\overline{x}_2\\ 
&&\ddots&&&&\vdots\\
&&&b_{k-1}&&&\overline{x}_{2k-3}\\
&&&&-b_{k-1}&&\overline{x}_{2k-2}\\
&&&&&0&\overline{x}_{2k-1}\\
\hline
x_1&x_2&\hdots&x_{2k-3}&x_{2k-2}&x_{2k-1}&0
\end{array}
\right)\end{displaymath}
\begin{displaymath}
=\left(\begin{array}{c|c} L&\\ \hline&1\end{array}\right)
\left(\begin{array}{c|c} L\inv &\\ \hline&1\end{array}\right)\,i\,
\left(\begin{array}{c|c} \begin{array}{cccc}b_1&&&\\&\ddots&&\\&&-b_{k-1}&\\&&&0 \end{array}&X^*\\ \hline X&0\end{array}\right)\, \,\left(\begin{array}{c|c} L &\\ \hline&1\end{array}\right)\,\left(\begin{array}{c|c} L \inv&\\ \hline&1\end{array}\right)=
\end{displaymath}
\begin{displaymath}
=\left(\begin{array}{c|c} L&\\ \hline&1\end{array}\right)
\left( \begin{array}{c|c}\begin{array}{cccc}L(b_1)&&&\\ &\ddots&& \\ && L(b_{k-1})&\\&&&0 \end{array}&i L\inv\,X^*\\
 \hline iX\,L &0 \end{array}\right) \,\left(\begin{array}{c|c} L\inv&\\ \hline&1\end{array}\right)=
\end{displaymath}
\begin{displaymath}
=\left(\begin{array}{c|c} L&\\ \hline&1\end{array}\right)\,\,A\,\,\left(\begin{array}{c|c} L\inv&\\ \hline&1\end{array}\right)\end{displaymath}
\normalsize where \small
\begin{displaymath}
 A:=\left( \begin{array}{c|c}\begin{array}{cccc}L(b_1)&&&\\ &\ddots&& \\ && L(b_{k-1})&\\&&&0 \end{array}&i L\inv\,X^*\\
 \hline iX\,L &0 \end{array}\right) 
\end{displaymath}
\normalsize Together with Equation \ref{b6} this gives that \small \begin{displaymath}
 S=\left( \begin{array}{ccc}
        L(a_1)&&\\
&\ddots&\\
&&L(a_k) \end{array}\right) =J\inv \, i \,\left(\begin{array}{ccccc}
a_1&&&&\\ 
&-a_1&&&\\ 
&&\ddots&&\\
&&&a_k&\\
&&&&-a_k
\end{array}
\right)
\, J   \end{displaymath} \normalsize \begin{displaymath} 
=J\inv \,B\,\left(\begin{array}{c|c} L&\\ \hline&1\end{array}\right)\,\,A\,\,\left(\begin{array}{c|c} L\inv&\\ \hline&1\end{array}\right) \,B\inv \,J                                                                  
\end{displaymath}
Notice that we can choose $X$ so that $A$ is not only in $\lie{u}(2k)^*$ but also in $\lie{so}(2k)^*$. If $x_j=r_j+iw_j$, then
\begin{displaymath}
 Y:=i\,L\inv \,X^*=\frac{1}{\sqrt{2}}\left(
 \begin{array}{cccc}
\begin{array}{cc}
1&-i \\
-i&1
\end{array}&&&\\
&\ddots&&\\
&&\begin{array}{cc}
1&-i \\
-i&1
\end{array}&\\
&&&1
  \end{array}
 \right)\,\left(\begin{array}{c}
w_1+ir_1\\
w_2+ir_2\\
\ldots \\
w_{2k-3}+ir_{2k-3}\\
w_{2k-2}+ir_{2k-2}\\
w_{2k-1}+ir_{2k-1}
           \end{array}\right)=\end{displaymath}\begin{displaymath}=\frac{1}{\sqrt{2}}\left(\begin{array}{c}
w_1+r_2+i(r_1-w_2)\\
w_2+r_1+i(r_2-w_1)\\
\ldots \\
w_{2k-3}+r_{2k-2}+i(r_{2k-3}-w_{2k-2})\\
w_{2k-2}+r_{2k-3}+i(r_{2k-2}-w_{2k-3})\\
w_{2k-1}+ir_{2k-1}
           \end{array}\right).
\end{displaymath}
This vector is real if and only if $r_{2j-1}=w_{2j}$ and $r_{2j}=w_{2j-1}$, for $j=1, \ldots, k-1$ and $r_{2k-1}=0$. According to Lemma \ref{open}, only the absolute values of $x_j$'s are uniquely defined
and $|x_{2j-1}|=|x_{2j}|$ for $j=1, \ldots, k-1$. Therefore, if we take any $x_{2j-1}=r_{2j-1}+iw_{2j-1}$ with prescribed absolute value, and put $x_{2j}=w_{2j-1}+ir_{2j-1}$, $x_{2k-1}=|x_{2k-1}|$ 
then vectors $i\,L_k\inv \,X^*$ and its transpose conjugate $-i \,X \,L_k$ are real and $A \in \lie{so}(2k)^*$.\\
Moreover, the only two matrices in the positive Weyl chamber with the same characteristic polynomial as the matrix $A$ are \begin{displaymath}
S=\left(\begin{array}{ccc}
L(a_1)&&\\ 
&\ddots&\\
 &&L(a_k)    \end{array}\right),\,\widetilde{S}:=\left(\begin{array}{ccc}
L(a_1)&&\\ 
&\ddots&\\
 &&L(-a_k)    \end{array}\right) .\end{displaymath}
These matrices are $O(2k)$ conjugate but not $SO(2k)$ conjugate. Let $R \in O(2k)$ denote the diagonal matrix
with all $1$'s on diagonal except the last, $2k$-th, entry that is equal to $-1$.
Then $$\widetilde{S}=R\,S\,R\inv .$$
If the matrix $A$ we have constructed is in fact in the $SO(2k)$ orbit through $\widetilde{S}$, then the matrix 
$$R\,A\,R\inv=\left(\begin{array}{c|c}
\begin{array}{ccccc}
  L(b_1) &  && &\\
 &L(b_2)   && &\\
& &\ddots & &\\
&&& L(b_k)&\\&&&&0
    \end{array} &- Y\\
\hline
Y^T &0 
\end{array}
\right)$$
is in the $SO(2k)$ orbit through $S$. Therefore, if $Y$ is the vector such that matrices $A$ and $S$ have the same characteristic polynomial,
then either $Y$ or $-Y$ is the solution we need. Again we have that $y_{2j-1}^2+y_{2j}^2=2r_{2j-1}^2+2w_{2j}^2=2|x_{2j-1}|^2$ and $y_{2k-1}=\pm |x_{2k-1}|$ are uniquely defined.
\end{proof}

\end{document}